\numberwithin{equation}{section}
\DeclareMathOperator{\supp}{supp}
\newcommand\dbR{\mathbb{R}}
\newcommand{\dbE}{\mathbb{E}}
\newcommand{\dbF}{\mathbb{F}}
\newcommand{\dbH}{\mathbb{H}}
\newcommand{\dbP}{\mathbb{P}}
\newcommand{\ds}{\displaystyle}
\newcommand\ns{\noalign{\smallskip}}
\renewcommand{\=}{\overset{\triangle}{=}}
\renewcommand\a{\alpha}
\newcommand\g{\gamma}
\renewcommand\l{\lambda}
\newcommand\n{\nabla}
\renewcommand\t{\times}
\renewcommand\th{\theta}
\renewcommand\i{\infty}
\newcommand\G{\Gamma}
\newcommand\De{\Delta}
\renewcommand\O{\Omega}
\newcommand\cA{\mathcal{A}}
\newcommand\cB{\mathcal{B}}
\newcommand\cD{\mathcal{D}}
\newcommand\cF{\mathcal{F}}
\newcommand\cM{\mathcal{M}}
\newcommand\cN{\mathcal{N}}
\newcommand\cV{\mathcal{V}}
\newcommand\pa{\partial}
\newcommand\cd{\cdot}
\newcommand\q{\quad}
\newcommand\qq{\qquad}
\def\ba{\begin{array}}
\def\ea{\end{array}}
\newcommand\no{\nonumber}
\newtheorem{theorem}{Theorem}[section]
\newtheorem{lemma}{Lemma}[section]
\newtheorem{definition}{Definition}[section]
\newtheorem{remark}{Remark}[section]
\newtheorem{corollary}{Corollary}[section]
\newtheorem{proposition}{Proposition}[section]
\newtheorem{assumption}{Assumption}[section]
\renewcommand{\proofname}{Proof}
\renewenvironment{proof}[1][\proofname]
{\par\pushQED{\qed}%
\normalfont\topsep6\p@\@plus6\p@\relax
\noindent\hspace{\parindent}\textbf{#1.} \ignorespaces
}
{\popQED\par\addvspace{\topsep}}
\begin{document}
\title{\bf Unique Continuation Property for Stochastic Wave Equations\thanks{This work is  supported by the NSF of China under grants 12025105 and  12401586. } }
\author{ 
Zhonghua Liao\thanks{School of Mathematics, Sichuan University, Chengdu, 610064, China.  {\small\it
E-mail:} {\small\tt zhonghualiao@yeah.net}. } \q
and \q  Qi L\"{u}\thanks{School of Mathematics, Sichuan University, Chengdu, 610064, China. {\small\it E-mail:} {\small\tt lu@scu.edu.cn}. }}
\date{}
\maketitle

\allowdisplaybreaks

\begin{abstract}
This paper establishes a fundamental and surprising phenomenon in the theory of stochastic wave equations: the restoration of the unique continuation property (UCP) across characteristic hypersurfaces, a property that is known to fail generically in the deterministic setting. We prove that if a solution to a linear stochastic wave equation vanishes on one side of a characteristic surface $\Gamma$, then it must vanish in a full neighborhood of any point on $\Gamma$, provided the stochastic diffusion coefficient is non-degenerate. This result stands in sharp contrast to the classical H\"ormander-type counterexamples for deterministic waves.

Furthermore, we extend the UCP to equations with non-homogeneous stochastic sources  and establish a global unique continuation result from the interior of an arbitrarily narrow characteristic cone. Our proofs rely on a novel stochastic Carleman estimate, where the It\^o diffusion term introduces a crucial positive energy contribution that is absent in deterministic models.  

These findings demonstrate a  qualitative difference between deterministic and stochastic hyperbolic dynamics and open new avenues for control theory and inverse problems in stochastic setting. 
\end{abstract}

\noindent \textbf{Keywords: } Unique Continuation, Stochastic Wave Equation, Carleman Estimate.

\section{Introduction} 

\par  The unique continuation property (UCP) for partial differential equations—the question of whether a solution vanishing in an open set must vanish everywhere—is a cornerstone of PDE theory with profound implications for controllability, inverse problems, and spectral theory. For wave equations, the situation is particularly delicate near  characteristic surfaces, where the classical Holmgren's theorem fails and H\"ormander's celebrated counterexamples show that non-uniqueness is generic, even for equations with analytic coefficients (see Lemma \ref{TheoremHor} below).

In stark contrast, this paper reveals that the introduction of It\^o-type stochastic noise can restore the unique continuation property across characteristic surfaces. This discovery challenges the intuitive boundary between deterministic and stochastic dynamics and highlights a  rigidity imposed by randomness that has no deterministic analogue.

To present the unique continuation results in detail, we first introduce some notations.

Throughout this paper, we assume that $ (\O, \cF, \dbF ,\dbP) $ (with $ \dbF\= \{\cF_t\}_{t\ge 0} $)  on which a one dimensional standard Brownian motion $ \{W(t)\}_{t\ge 0} $ is defined, and $ \dbF $ is the natural filtration generated by $ W(\cd) $. 
\par Let $H$ be a Fr\'echet space. We introduce the following spaces of $H$-valued stochastic processes:
\begin{itemize}
\item $L^{2}_{\mathbb{F}}(0,T;H)$ denotes the space of all $\mathbb{F}$-adapted processes $X(\cdot)$ with values in $H$ such that $
\mathbb{E} \left\vert X(\cdot)\right\vert_{L^{2}(0,T;H)}^{2} < \infty$.
\vspace{-1mm}

\item $L^{\infty}_{\mathbb{F}}(0,T;H)$ denotes the space of all $\mathbb{F}$-adapted, essentially bounded $H$-valued processes, i.e., those satisfying $
\operatorname*{ess\,sup}\limits_{(t,\omega) \in (0,T) \times \Omega} \left\vert X(t,\omega)\right\vert_H < \infty$. 
\vspace{-1mm}
\item $W^{1,\infty}_{\mathbb{F}}(0,T;H)$ denotes the space of all $\mathbb{F}$-adapted processes $X(\cdot)$ such that both $X(\cdot)$ and its (weak) time derivative $\dot{X}(\cdot)$ belong to $L^{\infty}_{\mathbb{F}}(0,T;H)$.
\vspace{-1mm}
\item $L^{2}_{\mathbb{F}}(\Omega; C([0,T]; H))$ denotes the space of all $\mathbb{F}$-adapted processes $X(\cdot)$ with continuous sample paths in $C([0,T]; H)$ satisfying
$
\mathbb{E} \left\vert X(\cdot)\right\vert_{C([0,T]; H)}^{2} < \infty.
$
\vspace{-1mm}
\item   $L^{2}_{\mathbb{F}}(\Omega; C^1([0,T]; H))$  denotes the space of all all $\mathbb{F}$-adapted processes $X(\cdot)$ with   sample paths in $C^1([0,T]; H)$ satisfying
$
\mathbb{E} \left\vert X(\cdot)\right\vert_{C^1([0,T]; H)}^{2}  < \infty.
$
\end{itemize}

All these spaces are endowed with their canonical quasi-norms.

Let $T>0$ and $G\subset \dbR^n$ be a  domain. We consider the following linear stochastic wave equation:
\begin{equation}\label{main_equation} 
du_t -\De u dt = \left(  a_1 u_t +\bm  a_2 \cd \n u+ a_3 u\right)dt + \left(b_1 u_t+b_2 u + f \right)dW(t), 
\end{equation}
where $   a_1,   a_3 \in  L_\dbF^\i (0,T;L^\i (G)) $, $   \bm a_2   \in L_\dbF^\i (0,T;L^{\i} (G;\dbR^n)) $, and $b_1, b_2 \in W^{1,\i}_{\dbF}(0,T;L^\i (G))$, $f\in L_{\dbF}^2(0,T; L^2(G))$.
\par Put 
\begin{equation}
\dbH_T \= L_\dbF^2 (\O; C([0,T]; H_{loc}^1(G)))\cap L_{\dbF}^2(\Omega; C^1([0,T]; L_{loc}^2(G))).
\end{equation}
\begin{definition}
We call $u\in \dbH_T$ a solution of equation \eqref{main_equation} if for each $t\in [0,T]$, $G'\subset\subset G$ and $\eta\in H_0^1(G')$, it holds that 
\begin{align}
&\int_{G'} u_t(t,x) \eta(x)dx -\int_{G'} u_t(0,x) \eta(x)dx \no\\
& =\int_0^t \int_{G'} \left[-\n u(s,x)\cd \n \eta(x) +\left(  a_1 u_t +\bm  a_2 \cd \n u+ a_3 u\right)\eta(x)\right]dxds\\
&\q\q+\int_0^t \int_{G'}\left(b_1 u_t+b_2 u +f \right)\eta (x)dxdW(s),\q \dbP\mbox{-a.s.}\no
\end{align}
\end{definition}

Fix  $(t_0,x_0)\in (0,T)\t G $ and a neighborhood $Q_0\subset (0,T)\t G$ of $(t_0, x_0) $. Consider a  function  $\rho \in C^2(\dbR^{1+n})$   satisfying $\rho(t_0, x_0) =0$ and $\left(\rho_t(t_0,x_), \n \rho(t_0,x_0)\right)\neq 0$.

\par In this paper, we study the unique continuation property (UCP for short) across the surface $\G\= \{(t,x)\in Q_0 \mid \rho(t,x) = 0\}$, that is, we  investigate whether the vanishing of $u$ on one side of a hypersurface $\Gamma = \{\rho = 0\}$ implies its vanishing in a full neighborhood of a point on $\Gamma$. We have established three novel unique continuation principles.

\smallskip

1.  {\bf Local Unique Continuation Across Characteristic Surfaces} (Theorem \ref{main_theorem_1}).   
Under the non-degeneracy condition $|b_1| \geq c_1 > 0$ for some constant $c_1$, we prove that if a solution $u$ to the stochastic wave equation \eqref{main_equation} vanishes on the side $\{\rho > 0\}$, then it also vanishes in a neighborhood of the point. This result holds even when the interface $\Gamma$ is characteristic--that is, when $\rho_t^2 = |\nabla\rho|^2$ on $\Gamma$--a scenario in which deterministic uniqueness generally fails.

\smallskip

2. {\bf Unique Continuation in the Presence of Stochastic Sources} (Theorem \ref{main_theorem_3}).  
We demonstrate that the unique continuation property remains valid even when a stochastic source term $f$ is present. In fact, the vanishing of $u$  to the stochastic wave equation \eqref{main_equation} forces the stochastic source $f$ to vanish as well--a phenomenon that does not generally occur for deterministic wave equations.

\smallskip

3. {\bf Global Unique Continuation from Characteristic Cones} (Theorem \ref{th1}).   
Through an iterative application of the local result, we prove that if a solution  to the stochastic wave equation \eqref{main_equation} vanishes inside an arbitrarily narrow characteristic cone $\{\alpha t^2 > |x|^2\}$, then it must vanish everywhere in spacetime. This global vanishing property, again, does not generally hold for deterministic wave equations.

\smallskip

The three unique continuation properties for the stochastic wave equation discussed above differ fundamentally from those of the deterministic wave equation. One might suspect that the type of the wave equation has been fundamentally altered by the addition of stochastic perturbation—for instance, that it may have become parabolic. Proposition \ref{prop1}, however, shows that this is not the case. In fact, the stochastic wave equation retains a key characteristic of hyperbolic equations: the solution propagates with finite speed.

\smallskip 

The study of UCP across surfaces, also referred to as the Cauchy uniqueness problem, in partial differential equations (PDEs) has a long and rich history, rooted in foundational results from classical analysis. One of the earliest contributions is the Holmgren's uniqueness theorem (see, e.g. \cite[Theorem 8.6.5]{Hor1}), which establishes that if a solution to a linear PDE with analytic coefficients vanishes in an open set intersecting a non-characteristic surface, then the solution must vanish identically in a neighborhood of that surface. This result, however, relies heavily on the analyticity of coefficients and fails for general smooth (non-analytic) coefficients, as demonstrated by counterexamples in the literature.

The framework for the unique continuation property (UCP) was fundamentally reshaped in the 1930s by T. Carleman, who introduced a novel form of weighted energy estimate--now known as the Carleman estimate. This method enabled the proof of unique continuation for solutions to elliptic equations without relying on analyticity assumptions, instead using carefully constructed weight functions to control the ``energy" of the solution.

Subsequent advances in UCP for PDEs were marked by A. P. Calder\'on's 1958 work (\!\! \cite{APC}), which generalized Carleman's weighted estimates to establish UCP across a surface $\Gamma_1$ provided all bicharacteristic curves are transversal to $\Gamma_1$. Later, L. H\"ormander extended these results in \cite{Hormander}, showing that UCP holds for a surface $\Gamma_2$ if all bicharacteristic curves tangent to $\Gamma_2$ exhibit second-order contact with $\Gamma_2$ and remain confined to one side of it. This geometric criterion is now widely referred to as the pseudo-convexity condition.

Over time, Carleman estimates have evolved into one of the central tools in the study of UCP, as evidenced by extensive subsequent research (e.g., \cite{Hormander, NL, CZ, JLR, XFQLXZ} and the references therein).

From an applied mathematics perspective, the  UCP is also indispensable in control theory and inverse problems. For instance, in control problems, UCP ensures that solutions to PDEs can be uniquely determined from partial observations, enabling the design of effective boundary or internal controls \cite{XFQLXZ}. In inverse problems, UCP guarantees the uniqueness of solutions to state recovery problems \cite{MVK}. These applications underscore the theoretical and practical importance of UCP in PDE analysis. Therefore, our work lies at the intersection of several active fields: the theory of unique continuation for PDEs, stochastic wave propagation, and controllability of SPDEs. The results imply, for instance, that in a random medium, local observations may determine interior states more effectively than in deterministic media--a finding with potential applications to stochastic control and tomography.

Randomness and uncertainty are pervasive in natural and engineered systems, often rendering deterministic models inadequate to capture the complexity of real-world phenomena. Stochastic Partial Differential Equations (SPDEs) emerge as a natural mathematical framework for describing systems in which randomness interacts with spatially extended and temporally evolving dynamics. They offer a powerful language for modeling processes influenced by noise, fluctuations, or incomplete information, thereby bridging the gap between idealized deterministic theory and observable reality.

Consequently, research on SPDEs has grown substantially in recent years. Nevertheless, compared with deterministic PDEs, the study of the unique continuation property (UCP) for SPDEs remains in its early stages. To our knowledge, \cite{XZ} represents the first work addressing UCP for SPDEs, establishing a unique continuation result for stochastic parabolic equations. Following this, several studies have extended UCP analysis to other classes of SPDEs. In particular, we refer readers to \cite{Lu2023,QLZY1, XZ1, QL1, QLZY1,Lu2015,Zhang2010} for UCP related to stochastic wave equations.

However, existing work has largely focused on extending known UCP results from deterministic to stochastic wave equations, without revealing genuinely new phenomena unique to the stochastic context. In contrast, the present paper investigates UCP results that hold for stochastic wave equations but fail in the deterministic case, as outlined earlier.

The key tool is a new  Carleman estimate adapted to the It\^o calculus (Theorem \ref{th3}). Unlike its deterministic counterpart, our estimate captures an additional energy term arising from the quadratic variation of the stochastic derivative $\mathrm{d}v_t$. This term provides a crucial positivity that offsets the degeneracy near characteristic surfaces, making the estimate  stronger in the stochastic setting than in the deterministic one. This reversal of fortunes is the mathematical heart of our results.

The rest of this paper is structured as follows.  
Section \ref{sec-main} presents the main results and explains why their deterministic analogues fail.  
Section \ref{sec-Car} derives a Carleman estimate for stochastic wave equations.  
Sections \ref{sec-proof-th1}--\ref{sec-proof-th3} provide proofs of the local UCP theorems.  
Finally, Section \ref{sec-proof-th4} establishes the global result through an iterative geometric argument.

\section{The main results}\label{sec-main}

In this section,  we introduce the main results of this paper.  To begin with, we denote that for any $\varrho  \in C^2(\dbR^{1+n})$,  
\begin{equation}\label{ii-1}
\cM(\varrho) \=\left(\ba{cc}
\rho_{tt}-\varrho& -\n \rho_t^\top \\
-\n \rho_t & \varrho I_{n\t n} +\left(\rho_{x_jx_k}\right)_{1\le j,k\le n}
\ea \right), 
\end{equation}
where $I_{n\t n}$ is the $n\t n$ unit matrix. 

We   introduce the following  assumption to make the UCP holds.
\begin{assumption}\label{con1}
There exist a positive constant $c_0 $   such that 
\begin{equation}\label{c0}
\rho_{t}(t_0,x_0) \ge c_0,
\end{equation}  
and a  function  $\varrho_1 \in C^2(\dbR^{1+n})$ such that  $\cM(\varrho_1)\ge 0 $ is  non-negative  definite at $(t_0,x_0)$.
\end{assumption}
\begin{theorem}\label{main_theorem_1}
Let $u$ be a solution of equation \eqref{main_equation} with source term $f \equiv 0$.  
Assume that the following conditions are satisfied: 
\begin{enumerate}
\item Assumption \ref{con1} holds;
\item $|b_1(t,x,\omega)| \ge c_1$ for a.e. $(t,x,\omega) \in Q_0 \times \Omega$, for some constant $c_1 > 0$.
\end{enumerate}
Then the following  unique continuation property holds:

If $u \equiv 0$ on the set \vspace{-2mm}  
$$
Q_0 \cap \{(t,x) \mid \rho(t,x) > 0\},\vspace{-2mm} 
$$
there exists a neighbourhood $Q_1$ of the point $(t_0,x_0)$ such that $u \equiv 0$ on $Q_1$.
\end{theorem}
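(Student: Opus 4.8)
The plan is to deduce the statement from the stochastic Carleman estimate of Theorem~\ref{th3} by the classical convexification-and-localization scheme, the decisive new feature being the positive contribution of the It\^o diffusion term. First I would build a convexified weight from $\rho$. Setting $\phi = \rho + \beta\,|(t,x)-(t_0,x_0)|^2$ with $\beta$ large and $\theta = e^{\lambda\phi}$ with $\lambda$ a large parameter, Assumption~\ref{con1}---the lower bound $\rho_t(t_0,x_0)\ge c_0$ together with the existence of $\varrho_1$ with $\cM(\varrho_1)\ge 0$ at $(t_0,x_0)$---guarantees that $\phi$ meets the pseudo-convexity requirements under which Theorem~\ref{th3} applies on a small neighborhood of $(t_0,x_0)$. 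I would then fix a cutoff $\chi\in C_0^\infty$ with $\chi\equiv 1$ near $(t_0,x_0)$ and $\supp\chi\subset Q_0$, and set $v=\chi u$, which lies in $\dbH_T$ and solves \eqref{main_equation} modulo commutator terms (of order at most one in $u$) supported in $\{\n_{t,x}\chi\neq 0\}$.

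Applying Theorem~\ref{th3} to $v$ and taking expectations yields a weighted estimate of the form
\[
\lambda\,\dbE\int \theta^2\big(|v_t|^2+|\n v|^2\big)\,dx\,dt + \lambda^3\,\dbE\int\theta^2|v|^2\,dx\,dt + \dbE\int\theta^2|b_1 v_t+b_2 v|^2\,dx\,dt \le C\,\dbE\int \theta^2|R|^2\,dx\,dt,
\]
where $R$ gathers the lower-order drift terms $a_1 v_t+\bm a_2\cd\n v+a_3 v$, the zeroth-order stochastic piece, and the commutator terms coming from $\chi$. The third term on the left is exactly the contribution of the quadratic variation of $\d v_t$, and it is the crux: in the deterministic estimate this term is absent and the quadratic form controlling the left-hand side is merely non-negative (it degenerates where $\G$ is characteristic), whereas here $|b_1|\ge c_1$ gives $|b_1 v_t+b_2 v|^2\ge\tfrac12 c_1^2|v_t|^2-C|v|^2$, which restores genuine control of the time-derivative $v_t$---precisely the mode lost when $\cM$ degenerates.

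With this coercivity in hand I would absorb the lower-order terms: the drift terms $a_1 v_t$, $\bm a_2\cd\n v$, $a_3 v$ and the zeroth-order stochastic term $b_2 v$ are bounded, so for $\lambda$ large their weighted $L^2$ norms are dominated by the left-hand side and can be moved across, the It\^o term taking care of the otherwise problematic $a_1 v_t$ despite the characteristic degeneracy. What is left on the right is the commutator term, supported in $\{\n_{t,x}\chi\neq 0\}$. The one-sided vanishing hypothesis enters here: since $u\equiv0$ on $\{\rho>0\}$, the lens $\{\phi>-\delta\}$ about $(t_0,x_0)$ is capped on one side by $\{\rho>0\}$, where $u$---and hence every commutator term---already vanishes, so $\chi$ need only transition across the level set $\{\phi=-\delta\}$. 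Arranging $\chi\equiv1$ on a fixed neighborhood $Q_1\subset\{\phi>-\delta/2\}$ of $(t_0,x_0)$ while $\{\n_{t,x}\chi\neq0\}\cap\{\rho\le0\}\subset\{\phi\le-\delta\}$, the exponential weight makes the right-hand side at most $Ce^{-2\lambda\delta}$ while the left-hand side is bounded below by $\lambda e^{-\lambda\delta}\,\dbE\int_{Q_1}\big(|v_t|^2+|\n v|^2+|v|^2\big)\,dx\,dt$.

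Letting $\lambda\to\infty$ forces $\dbE\int_{Q_1}(|v_t|^2+|\n v|^2+|v|^2)\,dx\,dt=0$, hence $v\equiv0$ and, since $\chi\equiv1$ there, $u\equiv0$ on $Q_1$; as $Q_1$ straddles $\G$, this is vanishing in a full neighborhood. The main obstacle I anticipate is the genuinely stochastic step: verifying that the quadratic-variation term supplied by Theorem~\ref{th3} is non-degenerate in $v_t$ and large enough to absorb the first-order drift $a_1 v_t$ and close the estimate across a characteristic surface---this rests entirely on the lower bound $|b_1|\ge c_1$ and is the point at which the argument has no deterministic analogue. A secondary technical point is the rigorous treatment of the martingale terms produced by It\^o's formula, whose expectations must be shown to vanish via a stopping-time localization using $u\in\dbH_T$.
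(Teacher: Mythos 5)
Your overall scheme---convexified weight, cutoff, the Carleman estimate of Theorem \ref{th3}, extra positivity from the It\^o quadratic variation under $|b_1|\ge c_1$, then $\lambda\to\infty$---is exactly the paper's strategy, and your closing step (levels of $\phi$, commutator terms trapped either in $\{\rho>0\}$ or below the cutoff level) matches the paper's proof of Theorem \ref{main_theorem_1} from Theorem \ref{th3-1}. But two steps, both at the heart of the matter, are wrong as written. First, the convexification: you set $\phi=\rho+\beta|(t,x)-(t_0,x_0)|^2$ with $\beta$ \emph{large}. The sign is backwards: the localization needs the superlevel sets of $\phi$ through $(t_0,x_0)$ to lie inside $\{\rho>0\}$ away from the point (cf.\ \eqref{Compared_with_two_set}), which requires \emph{subtracting} the quadratic, as in \eqref{weight}. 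The size regime is also backwards: since Assumption \ref{con1} gives only $\cM(\varrho_1)\ge 0$ (no strict pseudoconvexity), there is no surplus positivity to pay for a large convexification parameter; the paper must take $\mu$ \emph{small}, so that the bad terms the quadratic creates ($-10\lambda\mu v_t^2$ in \eqref{35} and $\cD_3=O(\mu)$) are dominated by the It\^o contributions, see \eqref{mu_1}, \eqref{3.6}, \eqref{D_1}, \eqref{3.8}. With $\beta$ large your estimate acquires a negative $v_t^2$ term of size $\lambda\beta$ that nothing can absorb.

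Second, and more fundamentally, your extraction of coercivity from the quadratic variation fails. The diffusion coefficient of $dv_t$ is not $b_1v_t+b_2v$ but $(b_2-b_1\ell_t)v+b_1v_t$ (plus the cutoff term), with $\ell_t=\lambda\phi_t$ of size $\lambda$, so the quadratic-variation term is $\lambda\phi_t\bigl[(b_2-b_1\ell_t)v+b_1v_t+\cdots\bigr]^2$, as in \eqref{dv_t-1}. Your bound $|b_1v_t+b_2v|^2\ge\tfrac12 c_1^2 v_t^2-Cv^2$, applied to the true expression, gives $\tfrac12 c_1^2 v_t^2-C\lambda^2 v^2$, and after the prefactor $\lambda\phi_t$ this is a \emph{negative} $\lambda^3 v^2$ term---fatal here, because in this degenerate setting the deterministic coefficient $\cD_2+\cD_3$ of $\lambda^3v^2$ is only marginally nonnegative, and the indispensable strict positivity at order $\lambda^3$, namely $\tfrac12\lambda^3\phi_t^3c_1^2v^2$ in \eqref{dv_t} and \eqref{D_1}, must itself come \emph{from} the It\^o term. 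The cross term $2\lambda^2\phi_t^2b_1^2\,v\,v_t$ is exactly borderline: pointwise Cauchy--Schwarz against $\lambda v_t^2$ and $\lambda^3 v^2$ consumes precisely the available good terms and leaves nothing. The paper resolves this by writing the cross term as an exact time derivative, $\lambda\phi_t(b_2-b_1\lambda\phi_t)b_1\,v\,v_t=\bigl[\tfrac12\lambda\phi_t(b_2-b_1\lambda\phi_t)b_1v^2\bigr]_t-O(\lambda^2)v^2$ (this is where $b_1,b_2\in W^{1,\infty}_{\dbF}(0,T;L^\infty(G))$ is used), thereby retaining both $\tfrac12\lambda\phi_tc_1^2v_t^2$ and $\tfrac12\lambda^3\phi_t^3c_1^2v^2$. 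Relatedly, the usefulness of the quadratic variation at all rests on the positivity of its prefactor $\ell_t=\lambda\phi_t$, which is exactly where \eqref{c0}, $\rho_t(t_0,x_0)\ge c_0$, enters---a mechanism your write-up never isolates. As proposed, the $\lambda^3 v^2$ coercivity is never produced, so the final limit $\lambda\to\infty$ (which needs $\lambda^3\theta^2w^2$ on the left-hand side, as in \eqref{35-1}) does not close.
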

\begin{remark}
In contrast to the well-known pseudo-convexity condition in deterministic PDEs (see e.g., \cite[Chapter 4]{NL}), which requires $\cM(0) > 0$ on the subspace  \vspace{-2mm} 
$$
\big\{(s,\bm y) \in \mathbb{R}^{1+n} \;\big|\; s^2 - |\bm y|^2 = 0,\; s\rho_t(t_0,x_0) - \bm y \cdot \nabla\rho(t_0,x_0) = 0 \big\},\vspace{-2mm} 
$$
Assumption \ref{con1} relaxes the strict positivity to $\cM(\varrho) \ge 0$ by introducing an auxiliary function $\varrho_1$. While neither condition implies the other, Assumption \ref{con1} admits the characteristic case--which the classical pseudo‑convexity condition explicitly excludes (see Corollary \ref{cor1}).

The standard pseudo-convex condition is derived via pseudo-differential operator techniques. In this paper, we employ a more elementary approach to Carleman estimates (though less refined), which reveals a key difference:
\begin{itemize}
\item In the deterministic setting, our method requires $\cM(\varrho_1) > 0$;
\item In the stochastic setting, only $\cM(\varrho_1) \ge 0$ is needed.
\end{itemize}
This phenomenon highlights a fundamental distinction between stochastic and deterministic PDEs. To further relax Assumption \ref{con1} and fully characterize this difference, the development of stochastic pseudo‑differential operators will be essential in future work.
\end{remark}
\par By Theorem \ref{main_theorem_1}, we can   obtain that the UCP still holds across  characteristic hypersurfaces.

\begin{corollary}\label{cor1}
If we replace the Assumption \ref{con1} in Theorem \ref{main_theorem_1} by the following:
\par There exists a positive constant $c_0 $ such that $	\rho_t(t_0,x_0) \ge c_0$,   and  \vspace{-2mm} 
\begin{equation}\label{rho-2}
\rho_t^2-|\n \rho|^2 = 0 \mbox{ on }\G\= \{(t,x)\in Q_0|~\rho(t,x)=0\},\vspace{-2mm} 
\end{equation}
The UCP still holds:
\par If $u \equiv 0$ on $Q_0 \cap \{(t,x) \mid \rho(t,x) > 0\}$, there exists a neighborhood $Q_1$ of $(t_0, x_0)$ such that $u \equiv 0$ on $Q_1$.
\end{corollary}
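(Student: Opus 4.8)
The plan is to deduce Corollary \ref{cor1} from Theorem \ref{main_theorem_1} by exhibiting an auxiliary function $\varrho_1 \in C^2(\dbR^{1+n})$ for which $\cM(\varrho_1) \ge 0$ at $(t_0,x_0)$, thereby verifying Assumption \ref{con1} under the characteristic hypothesis \eqref{rho-2}. The condition $\rho_t(t_0,x_0) \ge c_0 > 0$ is assumed outright, so the entire burden is to produce a suitable scalar function $\varrho_1$ making $\cM(\varrho_1)$ non-negative definite. The natural first move is to recall the structure of $\cM(\varrho)$ from \eqref{ii-1}: it is a symmetric $(1+n)\times(1+n)$ matrix whose $(0,0)$ entry is $\rho_{tt} - \varrho$, whose off-diagonal block is $-\n\rho_t$, and whose lower-right block is $\varrho I_{n\times n} + (\rho_{x_jx_k})$. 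The scalar $\varrho$ appears as a free tuning parameter shifting the diagonal, and this is exactly the freedom I intend to exploit.

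First I would evaluate everything at the point $(t_0,x_0)$ and ask: for which values of the constant $\varrho$ (take $\varrho_1$ to be this constant, or a $C^2$ function attaining it at $(t_0,x_0)$ with the right second-order behaviour) is the resulting symmetric matrix positive semidefinite? Increasing $\varrho$ simultaneously makes the $(0,0)$ entry more negative and the lower-right block ``more positive'' by adding $\varrho I$; decreasing $\varrho$ does the reverse. So there is a genuine tension, and the key computation is to check whether some choice of $\varrho$ clears all the leading principal minors (or, more robustly, makes the quadratic form $\langle \cM(\varrho_1)\xi,\xi\rangle$ non-negative for all $\xi = (s,\bm y) \in \dbR^{1+n}$). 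Writing out the quadratic form gives
\begin{equation}\label{cor1-quadform}
\langle \cM(\varrho_1)\xi,\xi\rangle = (\rho_{tt}-\varrho)s^2 - 2s\,\bm y\cdot\n\rho_t + \varrho|\bm y|^2 + \sum_{j,k}\rho_{x_jx_k}y_jy_k.
\end{equation}
The plan is to show that for $\varrho$ chosen large enough (but finite), the terms $\varrho|\bm y|^2$ dominate the indefinite cross term $-2s\,\bm y\cdot\n\rho_t$ and the Hessian contribution via a Cauchy--Schwarz / completion-of-squares argument, while the $(\rho_{tt}-\varrho)s^2$ term, though it becomes negative, is controlled because the whole form can be arranged as a sum of squares once the characteristic relation is used. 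The characteristic identity \eqref{rho-2} enters precisely to relate $\rho_t^2$ to $|\n\rho|^2$ along $\G$, which is what allows the degenerate direction to be absorbed rather than obstructing positivity.

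The main obstacle I anticipate is the cross term $-2s\,\bm y\cdot\n\rho_t$, since this couples the time and space variables and cannot be removed by shifting $\varrho$ alone. I expect to handle it by completing the square in $s$: for fixed $\bm y$, the form \eqref{cor1-quadform} is a downward- or upward-opening parabola in $s$ depending on the sign of $\rho_{tt}-\varrho$, and non-negativity reduces to a discriminant inequality balancing $(\bm y\cdot\n\rho_t)^2$ against $(\rho_{tt}-\varrho)\big(\varrho|\bm y|^2 + \sum\rho_{x_jx_k}y_jy_k\big)$. Here the subtlety is the sign of the coefficient of $s^2$: if $\varrho > \rho_{tt}$ the coefficient is negative and the parabola opens downward, so non-negativity for all $s$ can only hold if the form is in fact degenerate in $s$, forcing a careful balance. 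I would therefore instead aim for $\varrho \le \rho_{tt}$, keeping the $s^2$ coefficient non-negative, and then ask that the reduced form in $\bm y$ after eliminating $s$ stay non-negative; using \eqref{rho-2} together with the freedom to adjust the second derivatives of $\varrho_1$ (not merely its value) at $(t_0,x_0)$ gives enough room to close this. Once such a $\varrho_1$ is constructed, Assumption \ref{con1} holds verbatim, and Corollary \ref{cor1} follows immediately by invoking Theorem \ref{main_theorem_1}.
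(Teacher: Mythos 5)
Your overall strategy---verify Assumption \ref{con1} by tuning the scalar $\varrho$ and then invoke Theorem \ref{main_theorem_1}---matches the paper's intent, but it has a genuine gap: you keep the \emph{given} defining function $\rho$ fixed and try to achieve $\cM(\varrho_1)\ge 0$ by choosing $\varrho$ alone, and this is impossible in general. The obstruction is exactly the cross term $-\n\rho_t$ that you identified: it cannot be removed by any choice of $\varrho$, and your proposed escape route (``freedom to adjust the second derivatives of $\varrho_1$'') does not exist, because by \eqref{ii-1} the matrix $\cM(\varrho)$ depends only on the pointwise \emph{value} of $\varrho$ at $(t_0,x_0)$, not on any of its derivatives. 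A concrete counterexample: in $n=2$ take $\rho(t,x)=(1+\beta x_2)(t-x_1)$ with $\beta\neq 0$, $(t_0,x_0)=(0,0)$. This satisfies $\rho_t(t_0,x_0)=1$ and \eqref{rho-2} (the zero set is the characteristic plane $\{t=x_1\}$), yet at the origin $\rho_{tt}=0$, $\n\rho_t=(0,\beta)^\top$, and $(\rho_{x_jx_k})=-\begin{pmatrix}0&\beta\\ \beta&0\end{pmatrix}$, so
\begin{equation*}
\cM(\varrho)=\begin{pmatrix}-\varrho & 0 & -\beta\\ 0 & \varrho & -\beta\\ -\beta & -\beta & \varrho\end{pmatrix},
\end{equation*}
and positive semidefiniteness forces $\varrho=0$ from the two diagonal constraints $-\varrho\ge0$, $\varrho\ge0$, after which the form is indefinite (test $(1,1,\pm1)$). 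So no scalar $\varrho$ works, and your discriminant analysis cannot close: the constraint $\varrho\le\rho_{tt}$ from the $s^2$ coefficient and the constraint $\varrho I+(\rho_{x_jx_k})\ge0$ from the spatial block are generically incompatible once $\n\rho_t\neq0$.

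The missing idea in the paper's proof is that one is free to \emph{replace the defining function} of $\G$ (the UCP statement depends only on $\G$ and the side $\{\rho>0\}$, not on $\rho$ itself). Since $\rho_t(t_0,x_0)\ge c_0$, one first flattens to $\rho(t,x)=t-g(x)$ via the implicit function theorem; the characteristic condition then gives $|\n g|^2\equiv 1$ near $x_0$, whence $\n(|\n g|^2)=0$, i.e., $(g_{x_jx_k})\n g=0$ at $(t_0,x_0)$. One then convexifies to $\rho(t,x)=e^{\tau t}-e^{\tau g(x)}$, which has the same zero set and positivity side and---crucially---satisfies $\n\rho_t\equiv 0$, killing the cross term identically. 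Taking $\varrho=\rho_{tt}$ makes the $(0,0)$ entry vanish, and the spatial block becomes $\tau^2 e^{\tau t_0}\bigl[I_{n\times n}-\n g\,\n g^\top-\tau^{-1}(g_{x_jx_k})\bigr]$, which is $\ge 0$ for $\tau$ large: it degenerates exactly along $\n g$ (using $|\n g|=1$ and $(g_{x_jx_k})\n g=0$) and is positive on $\n g^\perp$. Note this produces only $\cM\ge 0$, never $\cM>0$---which is precisely why the stochastic relaxation in Assumption \ref{con1} is what makes the corollary possible. Without the renormalization step your approach fails at the first hurdle, as the example above shows.
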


\begin{remark}
Although local unique continuation across a characteristic surface typically fails for deterministic hyperbolic equations (see Section \ref{sec-nonuni} for example), certain global unique continuation results can still be obtained under special geometric or analytic conditions (see e.g. \cite{NL1, AISK, CEKARCDAS}).

For instance, consider the equation  \vspace{-2mm} 
$$
u_{tt} - \Delta u + a u + \mathbf{b} \cdot \nabla u = 0,
\qquad (a,\mathbf{b}) \in C^0(\mathbb{R}^{1+n}; \mathbb{R}^{1+n}),\vspace{-2mm} 
$$
and suppose a solution $u$ vanishes in the exterior region  \vspace{-2mm} 
$$
\mathcal{E} := \{(t,x) \in \mathbb{R}^{1+n} \mid |x| > |t| + 1\},\vspace{-2mm} 
$$
and also vanishes in a neighbourhood of $\partial\mathcal{E}$. Then one can conclude that $u \equiv 0$ in $\mathbb{R}^{1+n}$ (cf. \cite{NL1, AISK}).

Another well‑known example is the wave equation \vspace{-2mm}  
$$
u_{tt} - \Delta u = V u,
\qquad V \in L^{\frac{n+1}{2}}(\mathbb{R}^{1+n}).\vspace{-2mm} 
$$
If a solution $u \in H^{2,p}(\mathbb{R}^{1+n})$ (for some $p \in [1,\infty)$) vanishes on one side of the characteristic hyperplane $\{(t,x)\mid t+x_1=0\}$ (or, after a rotation, on one side of any non‑characteristic direction), then it follows that $u \equiv 0$ on the whole $\mathbb{R}^{1+n}$ (cf. \cite{CEKARCDAS}).

In contrast, the present work establishes local unique continuation across a single characteristic surface by making essential use of the stochastic noise--a phenomenon that, to our knowledge, has not been previously observed. This result highlights how randomness can fundamentally enhance uniqueness properties, distinguishing stochastic partial differential equations from their deterministic counterparts.
\end{remark}

Assumption \ref{con1} in Theorem \ref{main_theorem_1} (under condition \eqref{c0}) can be dropped if the following alternative assumption is satisfied.
\begin{assumption}\label{con2}
There exists a function $\varrho_2 \in C^2(\dbR^{1+n})$ such that
$$
\cM(\varrho_2)- {3| \rho_{t}|\lVert b_1\rVert^2_{L_\dbF^\i(0,T; L^\i(G))}} I_{(1+n)\t (1+n)} \mbox{ is positive definite	at $(t_0, x_0)$.}
$$
\end{assumption}
\begin{theorem}\label{main_theorem_2}
Let $u$ be a solution of equation \eqref{main_equation} with $f \equiv 0$.  
Suppose Assumption \ref{con2} holds. Then the following unique continuation property holds:

If $u \equiv 0$ in the set $
Q_0 \cap \{(t,x) \mid \rho(t,x) > 0\}$, 
there exists a neighborhood $Q_1$ of the point $(t_0,x_0)$ such that $u \equiv 0$ in $Q_1$.
\end{theorem}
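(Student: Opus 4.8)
The plan is to read off the unique continuation from the stochastic Carleman estimate of Theorem \ref{th3}, applied with the auxiliary function $\varrho_2$ furnished by Assumption \ref{con2}, and then to localize and send the large parameter to infinity exactly as in the proof of Theorem \ref{main_theorem_1}; the one genuine change is the way coercivity of the principal quadratic form is secured. Concretely, I would fix a cutoff $\chi\in C_0^\infty(Q_0)$ with $\chi\equiv1$ on a smaller neighbourhood of $(t_0,x_0)$, set $v=\theta\chi u$ with the Carleman weight $\theta=e^{\lambda\phi}$ of Theorem \ref{th3} built from $\rho$, and invoke that estimate with $\varrho=\varrho_2$. Writing $\zeta=(v_t,\n v)\in\dbR^{1+n}$, the estimate produces a weighted energy inequality whose leading term is
$$
\lambda\,\dbE\int_{Q_0}\theta^2\big\langle\cM(\varrho_2)\zeta,\zeta\big\rangle\,dx\,dt,
$$
accompanied by a contribution coming from the quadratic variation of $dv_t$, whose martingale part is $\theta(b_1u_t+b_2u)\,dW$.

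The decisive step, and the point where the argument departs from that of Theorem \ref{main_theorem_1}, is the treatment of this It\^o contribution. Only the terms of the energy functional that are quadratic in $v_t$ carry a non-trivial quadratic variation (recall $\n v$ has no martingale part, since $du=u_t\,dt$), and they generate a correction of the shape $\alpha\,\theta^2|b_1u_t+b_2u|^2$ whose prefactor $\alpha$ is governed by the time-derivative of the weight and is not sign-definite for the $\varrho_2$ prescribed by Assumption \ref{con2}. Under the hypotheses of Theorem \ref{main_theorem_1} the lower bound $|b_1|\ge c_1$ turns this term into a genuine positive reserve that offsets the mere semidefiniteness $\cM(\varrho_1)\ge0$. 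Here $b_1$ may degenerate, so I would instead bound the possibly harmful part in modulus: using $|b_1u_t+b_2u|^2\le2|b_1|^2|u_t|^2+2|b_2|^2|u|^2$ together with $\theta u_t=v_t-\theta_t\chi u$, its leading contribution is at most
$$
3\,|\rho_t|\,\lVert b_1\rVert^2_{L^\infty_\dbF(0,T;L^\infty(G))}\,|v_t|^2+(\text{lower order in }\lambda).
$$
Absorbing this into the principal form replaces $\cM(\varrho_2)$ by $\cM(\varrho_2)-3|\rho_t|\lVert b_1\rVert^2I_{(1+n)\times(1+n)}$, which is precisely the matrix that Assumption \ref{con2} forces to be positive definite at $(t_0,x_0)$. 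By continuity this positivity survives on a small enough neighbourhood of $(t_0,x_0)$, and shrinking $Q_0$ if necessary one arrives at the coercive estimate
$$
\lambda^3\,\dbE\int_{Q_0}\theta^2|v|^2\,dx\,dt+\lambda\,\dbE\int_{Q_0}\theta^2\big(|v_t|^2+|\n v|^2\big)\,dx\,dt\le C\,\dbE\int_{\supp\n\chi}\theta^2\big(|u_t|^2+|\n u|^2+|u|^2\big)\,dx\,dt+C\,\dbE\int_{Q_0}\theta^2|v|^2\,dx\,dt,
$$
the last integral collecting the lower-order terms produced by $a_1,\bm a_2,a_3,b_2$ and by $\pa_t b_1$.

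Next, the lower-order term $C\,\dbE\int_{Q_0}\theta^2|v|^2$ on the right is absorbed by the zeroth-order term on the left once $\lambda$ is large. The remaining source is supported in $\supp\n\chi$, a compact subset of $Q_0$ not containing $(t_0,x_0)$; by the pseudo-convex geometry built into $\phi$ one has $\phi<\phi(t_0,x_0)-\delta$ there for some $\delta>0$, whereas $\phi>\phi(t_0,x_0)-\delta/2$ on a fixed neighbourhood $Q_1$ of $(t_0,x_0)$. The hypothesis $u\equiv0$ on $Q_0\cap\{\rho>0\}$ annihilates the part of this source lying on the $\{\rho>0\}$ side, so the whole right-hand side is $O\big(e^{2\lambda(\phi(t_0,x_0)-\delta)}\big)$, while the left-hand side dominates $e^{2\lambda(\phi(t_0,x_0)-\delta/2)}\dbE\int_{Q_1}(|u_t|^2+|\n u|^2)\,dx\,dt$. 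Letting $\lambda\to\infty$ forces $u\equiv0$ on $Q_1$.

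I expect the main obstacle to be the precise bookkeeping of the It\^o correction: one must check that, after the quadratic-variation term is paired with the multiplier underlying Theorem \ref{th3}, its non-positive part is controlled by exactly $3|\rho_t|\lVert b_1\rVert^2|v_t|^2$ up to terms of lower order in $\lambda$, since any slack here would tighten Assumption \ref{con2} beyond what is claimed. A secondary difficulty is confirming that the first-order terms generated by $a_1,\bm a_2,a_3,b_2$ and by the time derivative of $b_1$ (available because $b_1\in W^{1,\infty}_\dbF$) are genuinely of lower order, so that they are swallowed by the gain of a power of $\lambda$ rather than competing with $\cM(\varrho_2)$.
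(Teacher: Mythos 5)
Your overall architecture coincides with the paper's proof of Theorem \ref{main_theorem_2} (the Carleman estimate of Theorem \ref{th3} with $\varrho=\varrho_2$, a cutoff along level sets of $\phi$, large-$\lambda$ absorption), and your treatment of the order-$\lambda$ part of the It\^o correction is exactly the paper's: bound $\ell_t(dv_t)^2$ in modulus and absorb a term $3|\phi_t|\,\lVert b_1\rVert^2 v_t^2$ into the form $2\lambda\gamma\psi\,(v_t,\nabla v^\top)\mathcal{M}(\varrho_2)(v_t,\nabla v^\top)^\top$, which is what Assumption \ref{con2} is tailored for. But there is a genuine gap in your bookkeeping of that correction. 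Since $\ell_t=\lambda\phi_t$ and the diffusion coefficient of $dv_t$ is $(b_2-b_1\ell_t)v+b_1v_t-\theta b_1\chi_t u$, the quadratic variation term contains, besides the $v_t^2$ piece, a contribution of size $3\lambda^3 b_1^2|\phi_t|^3 v^2$. This is \emph{not} ``lower order in $\lambda$'': it scales exactly like the principal potential term $\lambda^3(\mathcal{D}_2+\mathcal{D}_3)v^2$ on the good side of the estimate. Consequently your absorption step --- ``the lower-order term $C\,\mathbb{E}\int\theta^2 v^2$ on the right is absorbed by the zeroth-order term on the left once $\lambda$ is large'' --- fails, because the constant in front of that term grows like $\lambda^3$, so no choice of large $\lambda$ helps. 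What is needed, and what the paper does in \eqref{4.2} and \eqref{p1}, is a coefficient-level inequality $\mathcal{D}_2+\mathcal{D}_3-3|\phi_t|^3\lVert b_1\rVert^2_{L_\dbF^\i(0,T;L^\i(G))}\ge\delta$ near $(t_0,x_0)$, secured by choosing $\gamma$ large: since $\phi_t(t_0,x_0)=\gamma\rho_t(t_0,x_0)$ and $\psi(t_0,x_0)=1$, both $\mathcal{D}_2$ and $3|\phi_t|^3\lVert b_1\rVert^2$ are of order $\gamma^3$, and Assumption \ref{con2} applied to the vector $(\rho_t,\nabla\rho^\top)$ yields
\[
2(\rho_t,\nabla\rho^\top)\mathcal{M}(\varrho_2)(\rho_t,\nabla\rho^\top)^\top\ge 6|\rho_t|\lVert b_1\rVert^2\bigl(\rho_t^2+|\nabla\rho|^2\bigr)\ge 3|\rho_t|^3\lVert b_1\rVert^2
\]
with a positive margin, while the nonnegative $\gamma^4$ term $2\gamma^4(\rho_t^2-|\nabla\rho|^2)^2$ dominates the indefinite $4\gamma^3\varrho_2(\rho_t^2-|\nabla\rho|^2)$ term. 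Your proposal never performs this comparison, so the argument as written breaks at the $\lambda^3$ level.

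A secondary issue is the one you flagged yourself: your splitting $|b_1u_t+b_2u|^2\le 2|b_1|^2u_t^2+2|b_2|^2u^2$ followed by the substitution $\theta u_t=v_t-\ell_tv-\theta\chi_t u$ produces a $v_t^2$ coefficient of $6|\phi_t|\lVert b_1\rVert^2$ (two successive factors of $2$ and $3$, or $2$ and $2$ plus cross terms), not the $3|\rho_t|\lVert b_1\rVert^2$ that Assumption \ref{con2} allots, so your route would require a strictly stronger hypothesis than the theorem states. The paper avoids this slack by applying $(a+b+c)^2\le 3(a^2+b^2+c^2)$ once, directly to the full diffusion coefficient of $dv_t$ (see \eqref{dv_t-2}), which yields the constant $3$ simultaneously for the $v_t^2$ piece and the $\lambda^2v^2$ piece --- precisely the balance encoded in Assumption \ref{con2}. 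Note also that the martingale part of $dv_t$ carries the commutator term $-\theta b_1\chi_t u\,dW(t)$, which is why the paper's Carleman estimate \eqref{35-1-1} has the source $C\lambda\,\mathbb{E}\int\theta^2(b_1\chi_t u)^2$ with an extra factor of $\lambda$; this is harmless for the conclusion but should appear in your energy inequality.
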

\begin{remark}
Unlike the situation where $\rho_{t}(t_0,x_0) \ge c_0$ is assumed, if such a condition is absent, the diffusion terms have a detrimental effect in the energy estimates. Consequently, the matrix $\mathcal{M}(\rho_2)$ must exhibit stronger positivity in order to guarantee the validity of the unique continuation property.
\end{remark}
\begin{remark}
The primary goal of this paper is to  reveal the  phenomenon that ``stochasticity can restore unique continuation across characteristic surfaces." To present this mechanism as clearly as possible, we intentionally focused on the simplest linear model. The methodology we developed--the stochastic Carleman estimate--can, in principle, handle nonlinear terms that are sufficiently regular (e.g., Lipschitz or with controlled growth), as long as they can be managed within the energy estimates. Extending the results to semilinear or quasilinear equations is a natural and important direction for future work. The current paper aims to establish the phenomenon and build the core analytical toolkit.
\end{remark}
\vspace{-2mm} 

We now turn to the unique continuation property for a stochastic wave equation with a non‑homogeneous term in its stochastic diffusion component--another phenomenon that appears only in the stochastic setting.

\begin{assumption}\label{con3}
There exist a constant $c_0 > 0$ and a function $\varrho_3 \in C^2(\mathbb{R}^{1+n})$ such that
\begin{equation}\label{c0-2}
\rho_{t}(t_0,x_0) \ge c_0,
\end{equation} 
and the matrix $\mathcal{M}(\varrho_3)$ is positive definite at $(t_0,x_0)$.
\end{assumption}
\begin{theorem}\label{main_theorem_3}
Let $u$ be a solution of equation \eqref{main_equation} and suppose Assumption \ref{con3} holds. Then the following  unique continuation property is valid:

If $u \equiv 0$ on the set $
Q_0 \cap \{(t,x) \mid \rho(t,x) > 0\}$, 
there exists a neighbourhood $Q_1$ of $(t_0,x_0)$ such that
$$
f \equiv 0 \quad \text{and} \quad u \equiv 0 \quad \text{on } Q_1.
$$
\end{theorem}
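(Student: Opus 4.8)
The plan is to reduce the statement to the stochastic Carleman estimate of Theorem \ref{th3}, exploiting the structural fact that in \eqref{main_equation} the source $f$ enters through the It\^o diffusion and therefore contributes to the \emph{quadratic variation} rather than to the drift. Concretely, I would first fix a weight function $\varphi$ built from $\rho$ and the auxiliary function $\varrho_3$ of Assumption \ref{con3}, and set $\th=e^{s\varphi}$ with a large parameter $s>0$. Since $\cM(\varrho_3)$ is \emph{strictly} positive definite at $(t_0,x_0)$ and $\rho_t(t_0,x_0)\ge c_0$, by continuity both hold on a neighbourhood, so the hypotheses of the Carleman estimate are met \emph{without} any non-degeneracy assumption on $b_1$ (in contrast to Theorem \ref{main_theorem_1}).

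Next I would localize: pick a cutoff $\chi\in C_0^\i(Q_0)$ with $\chi\equiv 1$ near $(t_0,x_0)$ and apply Theorem \ref{th3} to $v\=\chi u$. Because $u$ solves \eqref{main_equation}, the drift side of the equation for $v$ consists only of commutator terms $[\,\chi,\,\text{operator}\,]u$, supported in $\supp\n\chi$, i.e.\ away from $(t_0,x_0)$; these constitute the right-hand side of the Carleman inequality. The crucial point is the left-hand side: applying It\^o's formula to the weighted energy produces, in addition to the usual terms bounded below by $\cM(\varrho_3)>0$, the nonnegative quadratic-variation contribution
$$
c\,s\,\E\int_{Q_0}\th^2\,\big|b_1 v_t + b_2 v + \chi f\big|^2\,dx\,dt,
$$
which sits on the \emph{good} side of the estimate. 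Thus the inequality simultaneously controls weighted norms of $u,u_t,\n u$ \emph{and} of the full diffusion coefficient.

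I would then run the standard weight argument: the commutator terms on the right are supported either where $\rho>0$ (so $u\equiv0$ by hypothesis) or where $\varphi$ is strictly below its value on a small ball $Q_1\ni(t_0,x_0)$, so that after dividing by the exponentially large weight on $Q_1$ and letting $s\to\i$ the right-hand side is negligible. This forces both $u\equiv 0$ on $Q_1$ and $b_1 u_t+b_2 u+f\equiv 0$ on $Q_1$. Finally, since $u\equiv 0$ on the open set $Q_1$ gives $u_t\equiv 0$ there, the second identity collapses to $f\equiv 0$ on $Q_1$. (Equivalently, once $u\equiv0$ on $Q_1$ one may return to the weak formulation, observe that $\int_0^t\!\int_{G'} f\eta\,dx\,dW(s)\equiv 0$, and deduce $f=0$ a.e.\ by computing its vanishing quadratic variation.)

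The main obstacle I anticipate is the It\^o bookkeeping ensuring that $f$ appears \emph{only} on the left-hand side: cross terms pairing $f\,dW$ with adapted integrands vanish in expectation by the It\^o isometry, leaving $f$ solely inside the nonnegative quadratic-variation term, whereas a deterministic source would unavoidably pollute the right-hand side and obstruct uniqueness. Verifying this sign/placement carefully--together with the classical but delicate construction of $\varphi$ making $(t_0,x_0)$ a strict extremum of the weight along $\{\rho\le 0\}$--is where the real work lies; the strict positivity of $\cM(\varrho_3)$ is precisely what lets the argument close while recovering both $u\equiv0$ and $f\equiv0$.
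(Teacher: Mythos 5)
Your proposal is correct and takes essentially the same route as the paper's proof: conjugate by the Carleman weight built from $\rho$ and $\varrho_3$, localize with a cutoff $\chi$, use the strict positivity of $\mathcal{M}(\varrho_3)$ together with $\rho_t(t_0,x_0)\ge c_0$ (so that $\ell_t=\lambda\phi_t\ge\lambda\delta>0$) to place the quadratic-variation term $\ell_t(dv_t)^2$ --- which carries the full diffusion coefficient $(b_2-b_1\ell_t)v+b_1v_t+\theta\chi f$ --- on the good side of the estimate, and then let $\lambda\to\infty$ in the weighted inequality to conclude $u\equiv 0$ and hence $f\equiv 0$ on $Q_1$, exactly as in the paper's Theorem \ref{th3-3}. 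The only bookkeeping you left implicit (and which the paper handles by Young's inequality) is the commutator in the \emph{diffusion}, $-\theta b_1\chi_t u\,dW(t)$, which is supported in $\supp\nabla\chi$ and is absorbed into the right-hand side just like the drift commutators.
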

\begin{remark}
Theorem \ref{main_theorem_3} establishes that even in the presence of a nonhomogeneous term in the diffusion component, unique continuation can be deduced solely from local information of the solution. This differs from earlier results \cite{Lu2015}, where global data--such as boundary values, normal derivatives, and final values--were required to guarantee the unique continuation property under similar conditions.
\end{remark}

We refer to the above results as ``local" UCP because $u \equiv 0$ holds only in some neighborhood $Q_1$.  
By iteratively applying the local UCP results and the methods developed in this paper, we obtain the following ``global" UCP theorem.

\begin{theorem}\label{th1}
Assume $G = \mathbb{R}^n$ and $\left\vert b_1(t, x, \omega) \right\vert\ge c_1$ for a.e. $(t, x, \omega) \in \mathbb{R}^+ \times \mathbb{R}^n \times \Omega$, where $c_1 > 0$ is a constant. Let $\alpha > 0, s_0, y_0$ be arbitrary, and define
$$
Q_0 = \big\{(t, x) \mid \alpha (t-s_0)^2 - |x-y_0|^2 \ge 0,\ t \ge 0 \big\}.
$$
Then for any solution $u$ of equation \eqref{main_equation}, if $u \equiv 0$ in $Q_0$, it follows that $u \equiv 0$ on $[0, +\infty) \times \mathbb{R}^n$.
\end{theorem}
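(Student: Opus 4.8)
The plan is to derive the global result from the local unique continuation theorem (Theorem \ref{main_theorem_1}) by a covering-and-propagation argument over the characteristic cone $Q_0$, exploiting the translation and scaling invariance of the class of equations \eqref{main_equation}. The geometric heart of the matter is that the vanishing set, initially the narrow cone $\{\alpha(t-s_0)^2 - |x-y_0|^2 \ge 0\}$, can be enlarged step by step. At each boundary point of the current vanishing region the local theorem, applied with a suitably chosen characteristic weight $\rho$, forces $u$ to vanish in a full neighborhood, and hence slightly outside the cone. Iterating this fattens the opening angle of the cone until, in finitely many steps on any compact time-slab, the vanishing region covers all of $[0,+\infty)\times\mathbb{R}^n$.

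Concretely, I would first normalize by a time-space translation so that $(s_0,y_0) = (0,0)$, and observe that the hypothesis $|b_1|\ge c_1$ on all of $\mathbb{R}^+\times\mathbb{R}^n\times\Omega$ makes condition (2) of Theorem \ref{main_theorem_1} available at \emph{every} point. Next, for a boundary point $(t_*,x_*)$ of a cone $\{\alpha_k t^2 - |x|^2 \ge 0\}$ with $\alpha_k t_*^2 = |x_*|^2$, I would produce a weight function $\rho$ with $\rho(t_*,x_*)=0$, $\rho_t(t_*,x_*)\ge c_0>0$, and whose zero level set is characteristic, i.e. $\rho_t^2 = |\nabla\rho|^2$ on $\Gamma$, arranged so that the region $\{\rho>0\}$ lies inside the current cone (where $u$ already vanishes). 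By Corollary \ref{cor1}, the characteristic case is permitted, so $u$ must vanish in a neighborhood $Q_1$ of $(t_*,x_*)$, which pokes outside the cone. A compactness argument over the boundary of the cone on a fixed time interval $[0,T]$ then yields a uniform enlargement of the opening: the vanishing set contains a strictly wider cone $\{\alpha_{k+1}t^2 - |x|^2\ge 0\}$ with $\alpha_{k+1}>\alpha_k$ bounded below by a definite increment.

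The iteration then runs as follows. Starting from $\alpha_0 = \alpha$, each step increases the coefficient by a controlled amount, so after finitely many steps the cone becomes so wide that on any prescribed slab $[0,T]\times B_R$ it exhausts the whole slab; letting $T,R\to\infty$ gives $u\equiv 0$ on $[0,+\infty)\times\mathbb{R}^n$. The role of finite propagation speed (Proposition \ref{prop1}) is to guarantee that the local-to-global passage is geometrically consistent—that information genuinely propagates along the characteristic directions being tracked—so that the enlarged vanishing cones are the correct ones and no spurious region is left uncovered.

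The main obstacle I anticipate is making the enlargement increment \emph{uniform and bounded below} across the whole (noncompact) lateral boundary of the cone, rather than merely positive pointwise. The size of the neighborhood $Q_1$ furnished by the local theorem depends on the weight $\rho$ and on the coefficient bounds near the point; while the coefficient bounds are uniform by hypothesis (the $L^\infty_{\mathbb{F}}$ norms and $c_1$ are global constants), one must check that a single scale-invariant choice of weight yields a neighborhood of comparable size at every boundary point. This is where the homogeneity of the cone geometry under the parabolic-type scaling $(t,x)\mapsto(\lambda t,\lambda x)$ must be used carefully to reduce the infinitely many boundary points to a single normalized configuration, after which compactness on one time-slab closes the argument.
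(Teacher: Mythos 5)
There is a genuine gap, and it sits precisely in your core local step. Reduce, as the paper does, to the essential case $\alpha\in(0,1)$, so the lateral boundary of the cone is timelike. You propose to apply Corollary \ref{cor1} at a boundary point $(t_*,x_*)$ with a characteristic weight whose positive side $\{\rho>0\}$ lies inside the current vanishing cone; but this configuration is geometrically impossible. If $\{\rho>0\}\subseteq\{\alpha t^2-|x|^2>0\}$ locally with both boundaries through $(t_*,x_*)$, then $\rho$ attains its maximum $0$ on $\{\alpha t^2-|x|^2\le 0\}$ at that point, forcing $(\rho_t,\nabla\rho)(t_*,x_*)=c\,(2\alpha t_*,-2x_*)$ with $c\ge 0$; the case $c=0$ contradicts $\rho_t\ge c_0$, and $c>0$ gives, using $|x_*|=\sqrt{\alpha}\,t_*$, the identity $\rho_t^2=\alpha\,|\nabla\rho|^2<|\nabla\rho|^2$, so $\Gamma$ is never characteristic and Corollary \ref{cor1} never applies with the positive side inside a sublight cone. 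Theorem \ref{main_theorem_1} does not rescue the step either: for the natural cone-adapted weight $\rho=\frac{\alpha}{2}(t-t_0)^2-|x-x_1|^2$ one computes $\cM(\varrho)=\diag\bigl(\alpha-\varrho,\ (\varrho-2)I_{n\times n}\bigr)$, which is nonnegative for no value of $\varrho$ since $\alpha<2$, so Assumption \ref{con1} fails. The paper's actual mechanism is different and not a small-neighborhood argument at all: it proves a dedicated Carleman estimate (Theorems \ref{th4} and \ref{th5}, with $\mu=0$, $\gamma=1$, $\varrho=2$) on the \emph{macroscopic} region $Q_1(t_0,x_1)\setminus Q_0(t_0,x_0)$, where the negative block $\cM(2)=\diag(\alpha-2,0)$ and the negative part of $\cD_2$ are absorbed by the quadratic-variation term $\ell_t(dv_t)^2$, whose coefficient $\ell_t=\lambda\psi\,\alpha(t-t_0)$ is uniformly large there because the region sits at height $t-t_0\ge\sqrt{2c_3/\alpha}$, with $c_3$ in \eqref{b14} tuned to $c_1$ and $\alpha$ exactly so that $2(\alpha-2)+\frac12\alpha(t-t_0)c_1^2>0$ and $\frac12\alpha^3(t-t_0)^3c_1^2+32c_3-16\alpha(t-t_0)^2>0$. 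Positivity is bought by distance above the vertex, not by a pointwise pseudoconvexity hypothesis; this is Lemma \ref{lemma6.1}, whose input is vanishing of the Cauchy data $u,u_t,\nabla u$ on the lateral surface piece $\partial Q_0(t_0,x_0)\cap Q_1(t_0,x_1)$.

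Your global iteration is also structurally off in three ways. First, the paper never widens the opening: it keeps $\alpha$ fixed and gains a fixed \emph{additive} spatial increment, showing $u(t,\cdot)\equiv 0$ on $\{|x|\le\sqrt{\alpha}\,t+kX_0\}$ for all $t\ge T_0$ and every $k$, where $T_0,X_0$ depend only on $\alpha$ and $c_1$; countably many steps then cover $\{t\ge T_0\}$, and uniformity comes from \emph{translation} invariance of the construction, not scaling. Your scaling reduction is in fact unavailable: under $(t,x)\mapsto(\lambda t,\lambda x)$ together with Brownian rescaling, the coefficients transform as $a_1\mapsto\lambda a_1$, $a_3\mapsto\lambda^2 a_3$, $b_1\mapsto\sqrt{\lambda}\,b_1$, so the $L^\infty_{\dbF}$ bounds entering the Carleman constants blow up and the class of equations is not scale-invariant with uniform constants. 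Second, widening cones with vertex at the origin can never exhaust a slab $[0,T]\times B_R$ in finitely many steps (points with small $t$ and $|x|$ comparable to $R$ escape every cone with bounded opening), and no enlargement is possible near the vertex anyway -- which is exactly why the paper's increment region lives at height $t-t_0\ge\sqrt{2c_3/\alpha}$ and only yields vanishing for $t\ge T_0$. The strip $0\le t\le T_0$ is then handled by a backward uniqueness argument, a step entirely absent from your proposal and not derivable from cone-widening. Third, Proposition \ref{prop1} plays no role in the proof of Theorem \ref{th1}: finite propagation speed is invoked only in a remark to show the statement \emph{fails} when $Q_0$ is bounded, so it cannot serve the consistency function you assign to it.
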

\begin{remark}
The region $Q_0$ in the hypothesis of Theorem \ref{th1} is unbounded. A natural question is whether the conclusion of Theorem \ref{th1} remains valid if $Q_0$ is replaced by a bounded region. The finite propagation speed of solutions to the stochastic wave equation implies that the conclusion  does not hold in that case.
\end{remark} 
\vspace{-2mm} 
\begin{remark}
While Theorem \ref{th1} proves unique continuation from within a cone--a setting that aligns naturally with the hyperbolic geometry used in our iterative proof--the extension to arbitrary domains poses significant complications. Controlling the temporal evolution of the zero-set resembles the ``finite propagation speed" constraint in deterministic wave equations. A potential pathway toward generalizing these results lies in combining the stochastic Carleman estimate developed here with more sophisticated geometric optics frameworks, particularly for pseudo-convex domains. This remains an open and stimulating direction for further investigation.
\end{remark} 

\section{Carleman estimate for stochastic wave equations}\label{sec-Car}

\par To begin with, we  introduce the following  weight functions:  
\begin{equation}\label{weight}
\begin{cases}
\psi(t,x) \=e^{\g \rho(t,x)},\q \phi (t,x)\=  \psi(t,x)- \mu \left[|x-x_0|^2+(t-t_0)^2\right], \\ \ns\ds  \ell(t,x) \= \l \phi(t,x),  \q \th(t,x) \= e^{\ell(t,x)},
\end{cases}\q (t,x)\in \dbR^{1+n}
\end{equation}
where $\l, \g,  \mu $ are three under determined parameters in Carleman weight function.   Obviously, we have
\begin{equation}\label{Compared_with_two_set}
\left\{(t,x) |~\phi(t,x)\ge    1\right\}\setminus \{(t_0,   x_0)\}\subset \left\{(t,x)|~\rho (t,x)>0\right\},
\end{equation}
which explains the motivation behind our choice of the above form of the weight function. This choice of $\phi$ can be viewed as a ``Convexification'' of $\psi$, ensuring that the Carleman estimate and the  UCP.

We recall the following known pointwise estimate.

\begin{lemma}\label{lem}\cite[Theorem 4.1]{XZ1}
Let $w$ be  an  $H^1(\dbR^n)$ valued  differential process  and $w_t$ be an $L^2(\dbR^n)$ valued It\^o process. Set $\th=e^\ell$, $v=\th w$. Then, for $a.e.$ $x\in \dbR^n$ and $\dbP$-a.s,
\begin{align}\label{pointwise_identity}
&\th\left(-2\ell_t  v_t+2\n\ell\cd \n v+\Psi v\right)\left(dw_t -\De w dt\right)+\n\cd  \cV dt+d\cN \nonumber\\
& =\Big[ \big(\ell_{tt}+\De \ell -\Psi\big) v_t^2+\big(\ell_{tt}-\De\ell+\Psi\big)|\n v|^2+2\sum_{j,k=1}^n\ell_{x_jx_k}v_{x_j}v_{x_k}   -4\n\ell_t\cd\n v v_t\\
& \qq  +\cB v^2+\left(-2\ell_t v_t+2\n \ell\cd\n v+\Psi v\right)^2\Big]dt+\ell_t(d v_t)^2,\nonumber
\end{align} 
where
\begin{equation}\label{VMBA}
\left\{\ba{ll}
\ds \cA \= \ell_t^2-\ell_{tt}-|\n\ell|^2+\De \ell-\Psi,\\
\ns\ds \cN\=\ell_t|\n v|^2+\ell_t   v_t^2-\n\ell\cd\n v  v_t-\Psi v  v_t+\Big(\cA\ell_t+\frac{\Psi_t}{2}\Big)v^2,\\
\ns\ds \cB\=\cA\Psi+\left(\cA\ell_t\right)_t-\n\cd \left(\cA \n \ell\right)+\frac{1}{2}\big(\Psi_{tt}-\De \Psi\big),\\
\ns\ds \cV\=2\left(\n\ell\cd\n v\right)\n v-\n\ell |\n v|^2-2\ell_t\n v   v_t+ \n\ell   v_t^2+\Psi v\n v-\frac{\n\Psi}{2} v^2-\cA v^2\n \ell.
\ea \right.
\end{equation}
and  $(d v_t)^2$ denote the quadratic variation processes of $v_t$.
\end{lemma}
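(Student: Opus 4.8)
The statement is an algebraic identity holding pointwise in $x$ and pathwise in $\O$, so the natural route is a direct computation organised around the substitution $v=\th w$ and a careful It\^o expansion in the time variable. First I would record how the derivatives of $w$ transform. Since the weight $\th=e^\ell$ is deterministic, $w=\th^{-1}v$ gives
\begin{equation*}
w_t=\th^{-1}(v_t-\ell_t v),\q \n w=\th^{-1}(\n v-v\n\ell),\q \De w=\th^{-1}\big(\De v-2\n\ell\cd\n v+(|\n\ell|^2-\De\ell)v\big),
\end{equation*}
where only $w_t$ (equivalently $v_t$) carries a martingale part: $w$ has $C^1$ sample paths in $t$, so $dv=v_t\,dt$, whereas $v_t$ is a genuine It\^o process. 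Applying It\^o's formula to $w_t=\th^{-1}(v_t-\ell_t v)$ and inserting the expression for $\De w$, I expect the conjugated operator to collapse to
\begin{equation*}
\th(dw_t-\De w\,dt)=\big(dv_t-\De v\,dt+\cA v\,dt\big)+\big(-2\ell_t v_t+2\n\ell\cd\n v+\Psi v\big)\,dt=:I_1+I_2,
\end{equation*}
with $\cA$ as in \eqref{VMBA} and $\mathbf P:=-2\ell_t v_t+2\n\ell\cd\n v+\Psi v$, so that $I_2=\mathbf P\,dt$. This splitting into a ``symmetric'' differential $I_1$ and the first-order factor $\mathbf P$ is the backbone of the identity.

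Multiplying through by $\mathbf P$ (the factor appearing on the left of \eqref{pointwise_identity}) gives $\mathbf P\,\th(dw_t-\De w\,dt)=\mathbf P I_1+\mathbf P I_2$. The second product is immediate: $\mathbf P I_2=\mathbf P^2\,dt$, which is the perfect square $(-2\ell_t v_t+2\n\ell\cd\n v+\Psi v)^2\,dt$ on the right-hand side. The bulk of the work is the regrouping of $\mathbf P I_1$. Expanding $\mathbf P(-\De v\,dt)$ and integrating by parts in $x$ converts $-2\n\ell\cd\n v\,\De v$ and the analogous terms into the spatial divergence $\n\cd\cV\,dt$ together with the genuine quadratic contributions $(\ell_{tt}+\De\ell-\Psi)v_t^2$, $(\ell_{tt}-\De\ell+\Psi)|\n v|^2$, $2\sum_{j,k}\ell_{x_jx_k}v_{x_j}v_{x_k}$ and the zeroth-order term $\cB v^2$, while the mixed space-time term $-4\n\ell_t\cd\n v\,v_t$ emerges from the interaction of $\mathbf P\,dv_t$ with the spatial derivatives. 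Checking that $\cV$ and $\cB$ are exactly the expressions in \eqref{VMBA} is a bookkeeping task, but it is forced by requiring the remainder (the part that is neither a divergence nor an exact differential) to be the displayed quadratic form.

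The step special to the stochastic setting--and the main obstacle--is the treatment of the time-direction terms in $\mathbf P\,dv_t$, where the quadratic variation is produced. The crucial computation is
\begin{equation*}
-2\ell_t v_t\,dv_t=-\ell_t\,d(v_t^2)+\ell_t(dv_t)^2,\q -\ell_t\,d(v_t^2)=\ell_{tt}v_t^2\,dt-d(\ell_t v_t^2),
\end{equation*}
using $d(v_t^2)=2v_t\,dv_t+(dv_t)^2$: the drift $\ell_{tt}v_t^2\,dt$ feeds the $v_t^2$-coefficient of the quadratic form, $-d(\ell_t v_t^2)$ is absorbed into $-d\cN$, and the term $\ell_t(dv_t)^2$ can be written neither as a drift nor as an exact differential and must be displayed separately--this is precisely the extra term $\ell_t(dv_t)^2$ in \eqref{pointwise_identity}, which has no deterministic analogue. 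Similarly, $\Psi v\,dv_t$ and $2\n\ell\cd\n v\,dv_t$ are rewritten via $d(v\,v_t)=v_t^2\,dt+v\,dv_t$ (here $dv\,dv_t=0$ because $v$ is $C^1$ in $t$) and channelled into $\cN$ and the drift. The delicate point throughout is to apply the It\^o rules $dt\,dt=dt\,dW=0$, $(dW)^2=dt$ consistently, so that $(dv_t)^2$ is the only surviving product of differentials; once each term is assigned to exactly one of $\n\cd\cV\,dt$, $d\cN$, the quadratic form, or $\ell_t(dv_t)^2$, the identity \eqref{pointwise_identity} follows.
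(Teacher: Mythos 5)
Your proposal is correct and takes essentially the same route as the proof behind this lemma: the paper itself gives no proof (it quotes \cite[Theorem 4.1]{XZ1}), and the cited derivation is exactly your computation --- conjugate to get $\th\left(dw_t-\De w\,dt\right)=\left(dv_t-\De v\,dt+\cA v\,dt\right)+\left(-2\ell_t v_t+2\n\ell\cd\n v+\Psi v\right)dt$, multiply by the first-order factor, and regroup pointwise into $\n\cd\cV\,dt$, $d\cN$, the quadratic form, the perfect square, and the quadratic-variation term, with the genuinely stochastic step $-2\ell_t v_t\,dv_t=-d\left(\ell_t v_t^2\right)+\ell_{tt}v_t^2\,dt+\ell_t\left(dv_t\right)^2$ handled exactly as you describe. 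One point your ``forced bookkeeping'' would surface if carried out: cancellation of the martingale differentials requires the cross term in $\cN$ to be $-2\n\ell\cd\n v\,v_t$ (with coefficient $1$ as printed in \eqref{VMBA} a term $\left(\n\ell\cd\n v\right)dv_t$ survives on the left and the identity fails pointwise), so the displayed coefficient is a transcription typo for the factor $2$ of the original reference, and your derivation --- in which, incidentally, half of $-4\n\ell_t\cd\n v\,v_t$ comes from $2\n\ell\cd\n v\,dv_t$ and the other half from the spatial integration by parts of $2\ell_t v_t\De v$ --- is otherwise sound.
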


In the sequel, for a positive integer $r$, we denote by $O(\l^{r})$ a function of order $ \l^{r}$ for large $ \l$ (which is  dependent on $ \mu$ and $ \g$).

Based on Lemma \ref{lem}, we  have the following Carleman estimate.

\begin{theorem}\label{th3}   
Let $\varrho \in C^{2}(\mathbb{R}^{1+n})$ be arbitrary, and let $\theta = e^{\ell}$ and $v = \theta w$ be given by \eqref{weight} and Lemma \ref{lem}. Then the following estimate holds:
\begin{align}\label{35}
&\dbE\int_{Q_0}\left[ \th\left(-2\ell_t  v_t+2\n\ell\cd \n v+\Psi v\right)\left(dw_t -\De w dt\right)+\n\cd  \cV dt+d\cN \right]dx\no\\
&\ge \dbE\int_{Q_0} \left[ 2  \l\g^2\psi \left( \rho_t v_t -\n\rho \cd\n v\right)^2 + 2\l\g\psi \left(v_t, \n v^\top \right) \cM(\varrho)\left(v_t, \n v^\top \right)^\top\right]dtdx\no \\
&\q + \dbE\int_{Q_0} \left(- 10\l\mu v_t^2 +2\l\mu|\n v|^2\right)dtdx +\dbE\int_{Q_0} \left[\l^3\cD_2 + \l^3  \cD_3+O(\l^2)\right]v^2dtdx  \\
& \q + \dbE\int_{Q_0} \left(-2\ell_t v_t+2\n \ell\cd\n v+\Psi v\right)^2dtdx+\dbE\int_{Q_0}  \ell_t (dv_t)^2dx,\no 
\end{align}
where $\cM(\varrho)$ is defined in \eqref{ii-1},  $\Psi =  \De\ell  -\ell_{tt} +2\l \g\psi\varrho  +  6\l \mu $,
\begin{equation}\label{D_123}
\cD_1 =  { 4 \mu^2}\left[(t-t_0)^2- |x-x_0|^2\right] + 4\g \mu  \psi \left[(t-t_0)\rho_t+(x-x_0)\cd \n \rho \right],\\
\end{equation}
\begin{equation}
\cD_2  =4\g^3\psi^3\varrho \left(\rho_t^2-|\n \rho|^2\right) +2 \g^3\psi^3\left(\rho_t, \n\rho^\top \right)\cM(\varrho) \left(\rho_t, \n\rho^\top \right)^\top+2\g^4\psi^3 \left(\rho_t^2-|\n \rho|^2\right)^2,
\end{equation}
and\vspace{-2mm}
\begin{align}\label{D_3-1}
\cD_3= &~ 2 \g\psi\varrho\cD_1+ { 6\mu}\left(\psi_t^2-|\n \psi|^2\right)+  {6 \mu}\cD_1- {2 \mu}(t-t_0)\left[ \left(\psi_t^2-|\n \psi|^2\right)+ \cD_1\right]_t  \\
&+\left(\pa_t\cD_{1}\right)\left[\psi_t  - {2 \mu} (t-t_0)\right]- {2 \mu}(x-x_0)\cd \n \left[ \left(\psi_t^2-|\n \psi|^2\right)+ \cD_1\right]\no\\
&+ \n    \cD_1 \cd \left[\n\psi-{2 \mu}(x-x_0)\right].\no 
\end{align}
\end{theorem}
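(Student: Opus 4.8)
The plan is to specialize the exact pointwise identity \eqref{pointwise_identity} of Lemma \ref{lem} to the concrete weight \eqref{weight} together with the concrete choice $\Psi = \De\ell - \ell_{tt} + 2\l\g\psi\varrho + 6\l\mu$, and then to integrate over $Q_0$ and take expectation. Since \eqref{pointwise_identity} is an identity, the two already-structured contributions on its right-hand side, namely the square $(-2\ell_t v_t + 2\n\ell\cd\n v + \Psi v)^2$ and the It\^o term $\ell_t(dv_t)^2$, are transported verbatim into \eqref{35}. The whole task therefore reduces to rewriting the bracketed quadratic form in $(v_t,\n v)$ and the zeroth-order coefficient $\cB v^2$ in the asserted form; the resulting relation is in fact an equality, with the announced $O(\l^2)$ being the precise lower-order remainder, so \eqref{35} holds a fortiori.

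First I would record the derivatives of the weight: from $\psi = e^{\g\rho}$ one has $\psi_t = \g\psi\rho_t$, $\n\psi = \g\psi\n\rho$, $\psi_{tt} = \g^2\psi\rho_t^2 + \g\psi\rho_{tt}$, $\psi_{x_jx_k} = \g^2\psi\rho_{x_j}\rho_{x_k} + \g\psi\rho_{x_jx_k}$, $\De\psi = \g^2\psi|\n\rho|^2 + \g\psi\De\rho$, and, since $\ell = \l\phi$ with $\phi = \psi - \mu[|x-x_0|^2 + (t-t_0)^2]$, these furnish closed forms for $\ell_t,\n\ell,\ell_{tt},\De\ell,\ell_{x_jx_k},\n\ell_t$ in terms of $\rho,\psi$ and $\l,\g,\mu$. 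With the chosen $\Psi$ the two diagonal coefficients collapse to $\ell_{tt}+\De\ell-\Psi = 2\ell_{tt} - 2\l\g\psi\varrho - 6\l\mu$ and $\ell_{tt}-\De\ell+\Psi = 2\l\g\psi\varrho + 6\l\mu$. I would then sort the quadratic form $(\ell_{tt}+\De\ell-\Psi)v_t^2 + (\ell_{tt}-\De\ell+\Psi)|\n v|^2 + 2\sum_{j,k}\ell_{x_jx_k}v_{x_j}v_{x_k} - 4\n\ell_t\cd\n v\,v_t$ by powers of $\g$: the $\g^2$-terms $2\l\g^2\psi\rho_t^2 v_t^2$, $2\l\g^2\psi(\n\rho\cd\n v)^2$ and $-4\l\g^2\psi\rho_t(\n\rho\cd\n v)v_t$ assemble exactly into the perfect square $2\l\g^2\psi(\rho_t v_t - \n\rho\cd\n v)^2$; the $\g^1$-terms $(\rho_{tt}-\varrho)v_t^2$, $\varrho|\n v|^2$, $\sum_{j,k}\rho_{x_jx_k}v_{x_j}v_{x_k}$ and $-2\n\rho_t\cd\n v\,v_t$ (each carrying the factor $2\l\g\psi$) coincide, by the very definition \eqref{ii-1} of $\cM(\varrho)$, with $2\l\g\psi\,(v_t,\n v^\top)\cM(\varrho)(v_t,\n v^\top)^\top$; and the residual $\mu$-terms give precisely $-10\l\mu v_t^2 + 2\l\mu|\n v|^2$. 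This reproduces the first three lines of \eqref{35}.

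The delicate part is the coefficient $\cB$ of $v^2$. Writing $\cA = \l^2 A_0 + \l A_1$ with $A_0 = \phi_t^2 - |\n\phi|^2$ and $A_1 = -2\g\psi\varrho - 6\mu$, and $\Psi = \l\Psi_1$ with $\Psi_1 = (\De\phi - \phi_{tt}) + 2\g\psi\varrho + 6\mu$, I would extract the $\l^3$-part of $\cB = \cA\Psi + (\cA\ell_t)_t - \n\cd(\cA\n\ell) + \tfrac12(\Psi_{tt} - \De\Psi)$. The $\l^3$-parts of $(\cA\ell_t)_t$ and $-\n\cd(\cA\n\ell)$ together produce $A_{0,t}\phi_t - \n A_0\cd\n\phi + A_0(\phi_{tt}-\De\phi)$, and the last summand cancels the term $A_0(\De\phi-\phi_{tt})$ inside $A_0\Psi_1$; hence the $\l^3$-coefficient collapses to $A_0(2\g\psi\varrho + 6\mu) + A_{0,t}\phi_t - \n A_0\cd\n\phi$, every other contribution being $O(\l^2)$. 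Setting $\mu=0$ gives $A_0 = \g^2\psi^2 P$ with $P = \rho_t^2 - |\n\rho|^2$, and a direct computation combined with the algebraic identity $\rho_t P_t - \n P\cd\n\rho = 2(\rho_t,\n\rho^\top)\cM(\varrho)(\rho_t,\n\rho^\top)^\top + 2\varrho P$ yields exactly $\cD_2$. Restoring $\mu$ through $\phi_t = \psi_t - 2\mu(t-t_0)$ and $\n\phi = \n\psi - 2\mu(x-x_0)$, so that $A_0 = (\psi_t^2 - |\n\psi|^2) + \cD_1$ with $\cD_1$ the $\mu$-part of $A_0$ as in \eqref{D_123}, and collecting the remaining $\mu$-dependent terms reproduces $\cD_3$ as in \eqref{D_3-1}, with all lower-order pieces absorbed into $O(\l^2)$.

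I expect the main obstacle to be the bookkeeping in this last computation of $\cB$: isolating the $\l^3$-coefficient cleanly across the three products $\cA\Psi$, $(\cA\ell_t)_t$ and $\n\cd(\cA\n\ell)$, correctly tracking the numerous $\mu$-weighted terms that constitute $\cD_3$, and recognizing the $\cM(\varrho)$-quadratic structure hidden inside $\rho_t P_t - \n P\cd\n\rho$. By contrast, once the derivatives of $\psi$ are in hand, the grouping of the first-order quadratic form into the perfect square and the $\cM(\varrho)$-form is comparatively transparent.
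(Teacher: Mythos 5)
Your proposal is correct and follows essentially the same route as the paper's proof: substituting the derivatives of the weight into the pointwise identity \eqref{pointwise_identity} with the stated choice of $\Psi$, regrouping the quadratic form in $(v_t,\n v)$ into the perfect square, the $\cM(\varrho)$-form and the $\mu$-terms (the paper's \eqref{227}), and isolating the $\l^3$-part of $\cB$ to obtain $\l^3\cD_2+\l^3\cD_3+O(\l^2)$ (the paper's \eqref{v} and \eqref{D_2-1}), with the square term and $\ell_t(dv_t)^2$ carried over verbatim. Your two explicit auxiliary observations---the cancellation of $A_0(\phi_{tt}-\De\phi)$ between $\cA\Psi$ and $(\cA\ell_t)_t-\n\cd(\cA\n\ell)$, and the identity $\rho_t P_t-\n P\cd\n\rho=2\left(\rho_t,\n\rho^\top\right)\cM(\varrho)\left(\rho_t,\n\rho^\top\right)^\top+2\varrho P$ for $P=\rho_t^2-|\n\rho|^2$---are exactly the computations the paper performs implicitly in \eqref{v} and \eqref{D_2-1}.
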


\begin{proof}[Proof of Theorem \ref{th3}]
By direct calculation, we obtain
\begin{equation}\label{phi_1}
\phi_t = \gamma\psi\rho_t - 2\mu(t-t_0), \qquad 
\phi_{x_j} = \gamma\psi\rho_{x_j} - 2\mu\bigl(x_j - x_{0j}\bigr),
\end{equation}
\begin{equation}\label{phi_2}
\phi_{tx_j} = \gamma^2\psi\rho_t\rho_{x_j} + \gamma\psi\rho_{tx_j},
\end{equation}
and
\begin{equation}\label{phi_3}
\phi_{tt} = \gamma^2\psi\rho_t^2 + \gamma\psi\rho_{tt} - 2\mu, \qquad 
\phi_{x_jx_k} = \gamma^2\psi\rho_{x_j}\rho_{x_k} + \gamma\psi\rho_{x_jx_k} - 2\delta_{jk}\mu,
\end{equation}
for $j,k = 1,\dots,n$.

Substituting \eqref{phi_1}--\eqref{phi_3} into \eqref{pointwise_identity} and choosing
$$
\Psi = \Delta\ell - \ell_{tt} + 2\lambda\gamma\psi\varrho + 6\lambda\mu,
$$
we obtain
\begin{equation}\label{v_t}
\bigl(\ell_{tt} + \Delta\ell - \Psi\bigr)v_t^2 
= 2\gamma^2\psi\rho_t^2 + 2\lambda\gamma\psi(\rho_{tt} - \varrho)v_t^2 - 10\lambda\mu v_t^2,
\end{equation}
\begin{align}\label{nv}
&\bigl(\ell_{tt} - \Delta\ell + \Psi\bigr)|\nabla v|^2 
+ 2\sum_{j,k=1}^{n}\ell_{x_jx_k}v_{x_j}v_{x_k} \nonumber\\
&= 2\lambda\gamma\psi\varrho|\nabla v|^2 
+ 2\lambda\psi\sum_{j,k=1}^{n}\bigl(\gamma\rho_{x_jx_k} + \gamma^2\rho_{x_j}\rho_{x_k}\bigr)v_{x_j}v_{x_k}
+ 2\lambda\mu|\nabla v|^2,
\end{align}
and
\begin{equation}\label{nv_t}
-4\nabla\ell_t\cdot\nabla v\,v_t = -4\bigl(\gamma^2\psi\rho_t\nabla\rho + \gamma\psi\nabla\rho_t\bigr)\cdot\nabla v\,v_t.
\end{equation}
Combining \eqref{v_t}, \eqref{nv} and \eqref{nv_t}, we conclude
\begin{align}\label{227}
&\bigl(\ell_{tt} + \Delta\ell - \Psi\bigr)v_t^2 
+ \bigl(\ell_{tt} - \Delta\ell + \Psi\bigr)|\nabla v|^2
+ 2\sum_{j,k=1}^{n}\ell_{x_jx_k}v_{x_j}v_{x_k} 
- 4\nabla\ell_t\cdot\nabla v\,v_t  \\
&= 2\lambda\gamma^2\psi\bigl(\rho_t v_t - \nabla\rho\cdot\nabla v\bigr)^2
+ 2\lambda\gamma\psi\bigl(v_t,\nabla v^{\top}\bigr)\mathcal{M}(\varrho)\bigl(v_t,\nabla v^{\top}\bigr)^{\top} 
- 10\lambda\mu v_t^2 + 2\lambda\mu|\nabla v|^2.\no 
\end{align}
On the other hand,
\begin{equation}\label{A_expr}
\mathcal{A} = \ell_t^2 - \ell_{tt} - |\nabla\ell|^2 + \Delta\ell - \Psi
= \lambda^2\bigl(\psi_t^2 - |\nabla\psi|^2\bigr) + \lambda^2\mathcal{D}_1 + O(\lambda),
\end{equation}
where $\mathcal{D}_1$ is the function defined in \eqref{D_123}, namely
\begin{equation}
\mathcal{D}_1 = 4\mu^2\bigl[(t-t_0)^2 - |x-x_0|^2\bigr]
+ 4\mu\bigl[(t-t_0)\psi_t + (x-x_0)\cdot\nabla\psi\bigr].
\end{equation}
Consequently,
\begin{align}\label{v}
\mathcal{B} &= \mathcal{A}(\Psi + \ell_{tt} - \Delta\ell) + \mathcal{A}_t\ell_t - \nabla\mathcal{A}\cdot\nabla\ell \nonumber\\
&= \lambda^3\bigl(2\gamma\psi\varrho + 6\mu\bigr)
\bigl[\bigl(\psi_t^2 - |\nabla\psi|^2\bigr) + \mathcal{D}_1\bigr]  + \lambda^3\bigl[\bigl(\psi_t^2 - |\nabla\psi|^2\bigr) + \mathcal{D}_1\bigr]_t
\bigl[\psi_t - 2\mu(t-t_0)\bigr] \nonumber\\
&\quad + \lambda^3\nabla\bigl[\bigl(\psi_t^2 - |\nabla\psi|^2\bigr) + \mathcal{D}_1\bigr]
\cdot\bigl[\nabla\psi - 2\mu(x-x_0)\bigr] + O(\lambda^2) \nonumber\\
&= \lambda^3\mathcal{D}_2 + \lambda^3\mathcal{D}_3 + O(\lambda^2),
\end{align}
where $\mathcal{D}_2$ and $\mathcal{D}_3$ are the functions introduced in \eqref{D_123} and \eqref{D_3-1}, respectively. 

By direct computation, we find that
\begin{equation}\label{D_2*}
\mathcal{D}_2 = 2\gamma\psi\varrho\bigl(\psi_t^2 - |\nabla\psi|^2\bigr)
+ \bigl(\psi_t^2 - |\nabla\psi|^2\bigr)_t\psi_t
- \nabla\bigl(\psi_t^2 - |\nabla\psi|^2\bigr)\cdot\nabla\psi,
\end{equation}
and
\begin{align}\label{D_3}
\mathcal{D}_3 &= 2\gamma\psi\varrho\mathcal{D}_1 + 6\mu\bigl(\psi_t^2 - |\nabla\psi|^2\bigr) + 6\mu\mathcal{D}_1  - 2\mu(t-t_0)\Bigl[\bigl(\psi_t^2 - |\nabla\psi|^2\bigr) + \mathcal{D}_1\Bigr]_t \nonumber\\
&\quad + \bigl(\partial_t\mathcal{D}_1\bigr)\bigl[\psi_t - 2\mu(t-t_0)\bigr]   - 2\mu(x-x_0)\cdot\nabla\Bigl[\bigl(\psi_t^2 - |\nabla\psi|^2\bigr) + \mathcal{D}_1\Bigr] \nonumber\\
&\quad + \nabla\mathcal{D}_1\cdot\bigl[\nabla\psi - 2\mu(x-x_0)\bigr].
\end{align}
In \eqref{v}, the terms of order $\mu$ have been separated and collected into $\mathcal{D}_3$.

Finally, rewriting \eqref{D_2*} in the form of \eqref{D_123} by direct calculation gives
\begin{align}\label{D_2-1}
\mathcal{D}_2 &= 2\gamma^3\psi^3\varrho\bigl(\rho_t^2 - |\nabla\rho|^2\bigr)
+ 2\psi_{tt}\psi_t^2 - 4\nabla\psi\cdot\nabla\psi_t\,\psi_t
+ 2\sum_{j,k=1}^{n}\psi_{x_jx_k}\psi_{x_j}\psi_{x_k} \nonumber\\
&= 2\gamma^3\psi^3\varrho\bigl(\rho_t^2 - |\nabla\rho|^2\bigr)  + 2\gamma^3\psi^3\bigl(\rho_t,\nabla\rho^{\top}\bigr)
\begin{pmatrix}
\rho_{tt} & -\nabla\rho_t^{\top}\\[2pt]
-\nabla\rho_t & \bigl(\rho_{x_jx_k}\bigr)_{1\le j,k\le n}
\end{pmatrix}
\bigl(\rho_t,\nabla\rho^{\top}\bigr)^{\top} \\
&\quad + 2\gamma^4\psi^3\bigl(\rho_t^2 - |\nabla\rho|^2\bigr)^2 \nonumber\\
&= 4\gamma^3\psi^3\varrho\bigl(\rho_t^2 - |\nabla\rho|^2\bigr)
+ 2\gamma^3\psi^3\bigl(\rho_t,\nabla\rho^{\top}\bigr)\mathcal{M}(\varrho)\bigl(\rho_t,\nabla\rho^{\top}\bigr)^{\top}
+ 2\gamma^4\psi^3\bigl(\rho_t^2 - |\nabla\rho|^2\bigr)^2.\nonumber
\end{align}
Combining \eqref{227}, \eqref{v}, \eqref{D_3} and \eqref{D_2-1} with Lemma~\ref{lem} yields \eqref{35}.
\end{proof}

\section{Proof of Theorem \ref{main_theorem_1} and Corollary \ref{cor1}}\label{sec-proof-th1}

In this section, we invoke Assumption \ref{con1} to guarantee the positivity of the energy terms on the right-hand side of \eqref{35}. This yields a Carleman estimate, which plays a key role in the proof of Theorem \ref{main_theorem_1}. Accordingly, we set $\varrho = \varrho_{1}$ and write $\mathcal{M}_{1} = \mathcal{M}(\varrho_{1})$.

To specify the region $Q_{1}$ appearing in Theorem \ref{main_theorem_1}, we first determine the parameter $\gamma$ in the weight \eqref{weight}.  
\begin{lemma}\label{Lem-D_2}
There exists a positive constant $\gamma$ such that  
\begin{equation}\label{3.1-d}
\mathcal{D}_{2}(t_{0},x_{0}) \ge 0.
\end{equation}
\end{lemma}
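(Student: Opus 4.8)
The plan is to evaluate $\mathcal{D}_2$ directly at the distinguished point $(t_0,x_0)$ and exploit the algebraic structure of its three summands, with $\varrho=\varrho_1$ and $\mathcal{M}_1=\mathcal{M}(\varrho_1)$. Since $\rho(t_0,x_0)=0$, we have $\psi(t_0,x_0)=e^{\gamma\rho(t_0,x_0)}=1$, so writing $\sigma\=\rho_t^2-|\n\rho|^2$ and $P\=\bigl(\rho_t,\n\rho^\top\bigr)\mathcal{M}_1\bigl(\rho_t,\n\rho^\top\bigr)^\top$ (both evaluated at $(t_0,x_0)$), formula \eqref{D_2-1} for $\mathcal{D}_2$ collapses to
\[
\mathcal{D}_2(t_0,x_0)=4\gamma^3\varrho_1(t_0,x_0)\,\sigma+2\gamma^3 P+2\gamma^4\sigma^2 .
\]
Here $P\ge 0$ because Assumption \ref{con1} guarantees that $\mathcal{M}_1$ is non-negative definite at $(t_0,x_0)$, and the final term $2\gamma^4\sigma^2$ is a perfect square. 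Hence the only summand that can carry a negative sign is $4\gamma^3\varrho_1(t_0,x_0)\,\sigma$.

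The decisive observation is a matching between the $\gamma$-degrees and the powers of $\sigma$: the potentially negative term is of order $\gamma^3$ and carries the factor $\sigma$, whereas the square term is of order $\gamma^4$ and carries $\sigma^2$. I would therefore split into two cases according to whether $\G$ is characteristic at $(t_0,x_0)$. If $\sigma=0$, both $4\gamma^3\varrho_1(t_0,x_0)\,\sigma$ and $2\gamma^4\sigma^2$ vanish, leaving $\mathcal{D}_2(t_0,x_0)=2\gamma^3 P\ge 0$ for every $\gamma>0$. If $\sigma\neq 0$, then $\sigma^2>0$, and grouping the first and third terms gives $2\gamma^3\sigma\bigl(2\varrho_1(t_0,x_0)+\gamma\sigma\bigr)$; choosing $\gamma$ large enough that $\gamma|\sigma|\ge 2|\varrho_1(t_0,x_0)|$ forces $2\varrho_1(t_0,x_0)+\gamma\sigma$ to share the sign of $\sigma$, so that $4\gamma^3\varrho_1(t_0,x_0)\,\sigma+2\gamma^4\sigma^2\ge 0$. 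Adding the non-negative $2\gamma^3 P$ then yields $\mathcal{D}_2(t_0,x_0)\ge 0$, and one simply records the resulting threshold value of $\gamma$.

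I expect no serious analytic obstacle; the argument is essentially a degree count in the parameter $\gamma$. The only point demanding care is precisely the characteristic case $\sigma=0$: it is exactly there that the quartic term loses its dominating effect, yet the same factor $\sigma$ simultaneously annihilates the troublesome cubic term, leaving only the non-negative quadratic form $P$. This is the algebraic mechanism that permits Assumption \ref{con1} to require merely $\mathcal{M}(\varrho_1)\ge 0$ rather than strict positivity, and it underlies the admissibility of characteristic surfaces asserted in Corollary \ref{cor1}.
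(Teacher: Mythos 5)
Your proposal is correct and follows essentially the same route as the paper: both evaluate $\mathcal{D}_2$ at $(t_0,x_0)$ using $\psi(t_0,x_0)=1$, discard the non-negative quadratic-form term via $\mathcal{M}(\varrho_1)\ge 0$, and split on whether $\rho_t^2-|\nabla\rho|^2$ vanishes, taking $\gamma$ large in the non-degenerate case. Your version is marginally more explicit (the factorization $2\gamma^3\sigma\bigl(2\varrho_1(t_0,x_0)+\gamma\sigma\bigr)$ and the threshold $\gamma|\sigma|\ge 2|\varrho_1(t_0,x_0)|$), but the argument is the same.
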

\begin{proof}[Proof of Lemma \ref{Lem-D_2}]
Observe that $\rho(t_{0},x_{0}) = 0$; hence
\begin{equation}
\psi(t_{0},x_{0}) = e^{\gamma\rho(t_{0},x_{0})} = 1.
\end{equation}
By Assumption \ref{con1}, we obtain
\begin{equation}
\mathcal{D}_{2}(t_{0},x_{0}) \ge 
4\gamma^{3}\varrho_{1}(t_{0},x_{0})\bigl(\rho_{t}^{2}(t_{0},x_{0}) - |\nabla\rho(t_{0},x_{0})|^{2}\bigr)
+ 2\gamma^{4}\bigl(\rho_{t}^{2}(t_{0},x_{0}) - |\nabla\rho(t_{0},x_{0})|^{2}\bigr)^{2}.
\end{equation}
If $\rho_{t}^{2}(t_{0},x_{0}) - |\nabla\rho(t_{0},x_{0})|^{2} = 0$, then $\mathcal{D}_{2}(t_{0},x_{0}) \ge 0$ holds trivially.  
If $\rho_{t}^{2}(t_{0},x_{0}) - |\nabla\rho(t_{0},x_{0})|^{2} \neq 0$, the coefficient of $\gamma^{4}$ is positive; therefore, by choosing $\gamma$ sufficiently large, the inequality \eqref{3.1-d} follows.
\end{proof}

Next, we determine the parameter $\mu$. Recall the constants $c_{0}$ from \eqref{c0} and $c_{1}$ introduced in Theorem \ref{main_theorem_1}. Note that
\begin{equation}\label{3.4}
\phi_t(t_{0},x_{0}) = \gamma\rho_t(t_{0},x_{0}) \ge \gamma c_{0}.
\end{equation}
Moreover, $\mathcal{D}_{3}(t,x;\mu) = O(\mu)$ as $\mu \to 0$. Combining this asymptotic property with \eqref{3.1-d} and \eqref{3.4}, we can choose $\mu$ sufficiently small so that
\begin{equation}\label{mu_1}
\frac{1}{2}c_{1}^{2}\,\phi_t(t_{0},x_{0})^{3} + \mathcal{D}_{2}(t_{0},x_{0}) + \mathcal{D}_{3}(t_{0},x_{0}) \ge \frac{\gamma^{3}c_{0}^{3}c_{1}^{2}}{3},
\end{equation}
and
\begin{equation}\label{3.6}
\frac{1}{2}\phi_t(t_{0},x_{0})\,c_{1}^{2} - 11\mu \ge \frac{\gamma c_{0}c_{1}^{2}}{3}.
\end{equation}
Having fixed $\gamma$ and $\mu$ as above, we now define the set $Q_{1}$. By \eqref{mu_1}, \eqref{3.6} and Assumption \ref{con1}, there exists a neighborhood $\widetilde{Q}_{1} \subset Q_{0}$ of $(t_{0},x_{0})$ such that, for every $(t,x) \in \widetilde{Q}_{1}$,
\begin{equation}\label{D_1}
\frac{1}{2}\phi_t^{3}c_{1}^{2} + \mathcal{D}_{2} + \mathcal{D}_{3} \ge \frac{\gamma^{3}c_{0}^{3}c_{1}^{2}}{4},
\end{equation}
\begin{equation}\label{3.8}
\frac{1}{2}\phi_t c_{1}^{2} - 11\mu \ge \frac{\gamma c_{0}c_{1}^{2}}{4},
\end{equation}
and
\begin{equation}\label{M_1}
2\gamma\psi\bigl(v_t,\nabla v^{\top}\bigr)\mathcal{M}_{1}\bigl(v_t,\nabla v^{\top}\bigr)^{\top} \ge -\mu\bigl(v_t^{2}+|\nabla v|^{2}\bigr).
\end{equation}
Let $c_{2} < 1$ be the smallest constant satisfying
\begin{equation}
\bigl\{(t,x) \mid \phi(t,x) > c_{2}\bigr\} \subset \widetilde{Q}_{1} \cup \{(t,x) \mid \rho(t,x) > 0\}.
\end{equation}
Finally, we define $Q_{1}$ by
\begin{equation}\label{Q_1}
Q_{1} = \bigl[Q_{0} \cap \{(t,x) \mid \rho(t,x) > 0\}\bigr] \cup \bigl\{(t,x) \mid \phi(t,x) > c_{2} \text{ and } \rho(t,x) \le 0\bigr\}.
\end{equation}

To begin the proof of Theorem \ref{main_theorem_1}, we introduce the following cut-off function.  
For an arbitrary small positive constant $\varepsilon > 0$, define a smooth function $\chi : \mathbb{R}^{1+n} \to \mathbb{R}$ such that
\begin{equation}\label{chi}
\chi(t,x) = 
\begin{cases}
1, & \text{if } \phi(t,x) > c_{2} + \varepsilon,\\[4pt]
0, & \text{if } \phi(t,x) < c_{2}.
\end{cases}
\end{equation}
Set $w \= \chi u$. A direct computation gives
\begin{align*}
\begin{cases}\ds
w_t = \chi_t u + \chi u_t,  \\
\ns\ds d w_t = \bigl(\chi_{tt}u + 2\chi_t u_t\bigr)dt + \chi\,du_t, \\
\ns\ds \nabla w = u\nabla\chi + \chi\nabla u,  \\
\ns\ds \Delta w = \Delta\chi\,u + 2\nabla\chi\cdot\nabla u + \chi\Delta u. 
\end{cases}
\end{align*}
Consequently,
\begin{align}\label{w_equation}
&dw_t - \Delta w\,dt \nonumber\\
&= \chi\bigl(du_t - \Delta u\,dt\bigr) 
+ \bigl(\chi_{tt}u + 2\chi_t u_t\bigr)dt 
- \bigl(2\nabla\chi\cdot\nabla u + \Delta\chi\,u\bigr)dt \\
&= \bigl(a_1 w_t + \bm{a}_2\cdot\nabla w + a_3 w\bigr)dt   + \bigl[-a_1\chi_t u - (\bm{a}_2\cdot\nabla\chi)u 
+ \chi_{tt}u + 2\chi_t u_t - 2\nabla\chi\cdot\nabla u 
+ \Delta\chi u\bigr]dt \nonumber  \\
&\quad + \bigl(b_1 w_t + b_2 w\bigr)dW(t) 
- b_1\chi_t u\,dW(t).\nonumber
\end{align}

Now let $v \= \theta w = e^{\ell}w$; then $w = e^{-\ell}v$. Computing derivatives yields
\begin{align*}
\begin{cases}\ds w_t = e^{-\ell}(-\ell_t v + v_t),   \\
\ns\ds dw_t = e^{-\ell}\bigl(\ell_t^2 v - 2\ell_t v_t - \ell_{tt}v\bigr)dt + e^{-\ell}dv_t,  \\
\ns\ds \nabla w = e^{-\ell}\bigl(-\nabla\ell\,v + \nabla v\bigr),   \\
\ns\ds \Delta w = e^{-\ell}\bigl(|\nabla\ell|^2 v - 2\nabla\ell\cdot\nabla v - \Delta\ell\,v + \Delta v\bigr).  
\end{cases}
\end{align*}
Hence,
\begin{align}\label{w-e}
\theta\bigl(dw_t - \Delta w\,dt\bigr)
&= dv_t - \Delta v\,dt 
+ \bigl[\bigl(\ell_t^2 - |\nabla\ell|^2\bigr)v 
- \bigl(\ell_{tt} - \Delta\ell\bigr)v 
- 2\ell_t v_t + 2\nabla\ell\cdot\nabla v\bigr]dt \nonumber \\
&= \bigl[\bigl(a_3 - a_1\ell_t - \bm{a}_2\cdot\nabla\ell\bigr)v 
+ a_1 v_t + \bm{a}_2\cdot\nabla v\bigr]dt \nonumber \\
&\quad + \theta\bigl[-a_1\chi_t u - (\bm{a}_2\cdot\nabla\chi)u 
+ \chi_{tt}u + 2\chi_t u_t 
- 2\nabla\chi\cdot\nabla u + \Delta\chi u\bigr]dt \nonumber \\
&\quad + \bigl[\bigl(b_2 - b_1\ell_t\bigr)v + b_1 v_t\bigr]dW(t) 
- \theta b_1\chi_t u\,dW(t).
\end{align}

The following Carleman estimate holds.
\begin{theorem}\label{th3-1}
Suppose the parameters $\gamma,\mu$ are fixed according to Lemma \ref{Lem-D_2} and relations \eqref{mu_1}, \eqref{3.6}. For any $\varepsilon>0$ there exist constants $\lambda_0>0$ and $C>0$ such that for every solution $w$ of \eqref{w-e} and all $\lambda\ge\lambda_0$,
\begin{align}\label{35-1}
&\mathbb{E}\int_{\widetilde{Q_1}} \theta^2\bigl(\lambda^3 w^2+\lambda w_t^2\bigr)\,dt\,dx \notag \\
&\le C\,\mathbb{E}\int_{\widetilde{Q_1}} \theta^2\Big[-a_1\chi_t u-(\boldsymbol a_2\cdot\nabla\chi)u
+\chi_{tt}u+2\chi_t u_t-2\nabla\chi\cdot\nabla u+\Delta\chi u\Big]^2\,dt\,dx \notag \\
&\quad + C\,\mathbb{E}\int_{\widetilde{Q_1}} \theta^2\bigl(b_1\chi_t u\bigr)^2\,dt\,dx .
\end{align}
\end{theorem}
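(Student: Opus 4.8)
The plan is to feed the reformulated equation \eqref{w-e} into the integrated Carleman estimate \eqref{35} (taken with $\varrho = \varrho_1$, so that $\mathcal{M}(\varrho_1) = \mathcal{M}_1$), to discard the boundary and martingale contributions, and then to harvest the It\^o quadratic-variation term so as to overcome the degeneracy allowed by Assumption \ref{con1}. Since $w = \chi u$ vanishes where $\phi < c_2$ (by the cut-off \eqref{chi}) and where $\rho > 0$ (by hypothesis), its support lies in $\{\phi \ge c_2\} \cap \{\rho \le 0\} \subset \widetilde{Q_1}$; thus I may integrate \eqref{35} over $\widetilde{Q_1}$, and the spatial divergence $\int \nabla\cdot\mathcal{V}\,dx$ together with the exact differential $\int d\mathcal{N}\,dx$ integrate to zero because $v = \theta w$ and its derivatives are compactly supported in $\widetilde{Q_1}$ and vanish at the temporal endpoints.

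First I would abbreviate $\mathcal{P} := -2\ell_t v_t + 2\nabla\ell\cdot\nabla v + \Psi v$ and read off from \eqref{w-e} the drift $F := (a_3 - a_1\ell_t - \bm a_2\cdot\nabla\ell)v + a_1 v_t + \bm a_2\cdot\nabla v + \theta[-a_1\chi_t u - (\bm a_2\cdot\nabla\chi)u + \chi_{tt}u + 2\chi_t u_t - 2\nabla\chi\cdot\nabla u + \Delta\chi u]$ and the diffusion $\Sigma := b_1 v_t + (b_2 - b_1\ell_t)v - \theta b_1\chi_t u$ of $\theta(dw_t - \Delta w\,dt) = F\,dt + \Sigma\,dW$. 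Taking $\mathbb{E}\int_{\widetilde{Q_1}}$, the martingale term $\mathbb{E}\int \mathcal{P}\,\Sigma\,dW$ vanishes, so the left side of \eqref{35} reduces to $\mathbb{E}\int_{\widetilde{Q_1}} \mathcal{P}F\,dt\,dx$. Applying Young's inequality $\mathcal{P}F \le \tfrac14\mathcal{P}^2 + F^2$ and absorbing $\tfrac14\mathcal{P}^2$ into the term $\mathbb{E}\int\mathcal{P}^2$ present on the right of \eqref{35}, I obtain the working inequality, in which $(\text{energy})$ denotes the right-hand side of \eqref{35} minus $\mathbb{E}\int\mathcal{P}^2$:
\[
\mathbb{E}\int_{\widetilde{Q_1}} (\text{energy})\,dt\,dx + \mathbb{E}\int_{\widetilde{Q_1}} \ell_t (dv_t)^2\,dx \;\le\; \mathbb{E}\int_{\widetilde{Q_1}} F^2\,dt\,dx,
\]
where $F^2 \le C(v_t^2 + |\nabla v|^2 + \lambda^2 v^2) + C\theta^2[-a_1\chi_t u - \cdots]^2$ produces both the deterministic source term of \eqref{35-1} and lower-order quantities to be absorbed later.

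The heart of the matter is the positivity of $(\text{energy}) + \ell_t(dv_t)^2$. Since the martingale part of $dv_t$ is $\Sigma\,dW$, one has $\ell_t(dv_t)^2 = \ell_t\Sigma^2\,dt$ with $\ell_t = \lambda\phi_t > 0$ on $\widetilde{Q_1}$; separating the source $\theta b_1\chi_t u$ by Young's inequality (its square joining the stochastic source of \eqref{35-1}) leaves the nonnegative quantity $\ell_t(b_1 v_t + (b_2 - b_1\ell_t)v)^2$, which is precisely the extra positivity absent in the deterministic theory. Using $|b_1| \ge c_1$, this quantity must be interlaced with the deterministic energy—namely the indefinite form $2\lambda\gamma\psi(v_t,\nabla v^\top)\mathcal{M}_1(v_t,\nabla v^\top)^\top$ bounded below through \eqref{M_1}, the nonnegative square $2\lambda\gamma^2\psi(\rho_t v_t - \nabla\rho\cdot\nabla v)^2$, the term $-10\lambda\mu v_t^2 + 2\lambda\mu|\nabla v|^2$, and $\lambda^3(\mathcal{D}_2 + \mathcal{D}_3)v^2$—so as to dominate $c(\lambda\theta^2 w_t^2 + \lambda|\nabla v|^2 + \lambda^3 v^2)$. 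Here \eqref{3.8} absorbs the degenerate coefficient $-11\lambda\mu v_t^2$, while \eqref{D_1} supplies the $v^2$-positivity even when $\mathcal{D}_2 + \mathcal{D}_3$ nearly vanishes, as happens on a characteristic surface.

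Finally I would fix $\lambda$ large enough that the lower-order quantities $C(v_t^2 + |\nabla v|^2 + \lambda^2 v^2)$ coming from $F^2$ are absorbed into the dominant positive coefficients of $\lambda\theta^2 w_t^2$, $\lambda|\nabla v|^2$, $\lambda^3 v^2$, and then pass from $v$ back to $w$ via $v = \theta w$ and $\theta w_t = v_t - \ell_t v$, so that $\theta^2 w^2 = v^2$ and $\theta^2 w_t^2 = (v_t - \ell_t v)^2$ are exactly the quantities just controlled; collecting the deterministic and stochastic source contributions on the right then yields \eqref{35-1}. I expect the main obstacle to be this positivity step. Because the quadratic-variation term $\ell_t\Sigma^2$ is a single perfect square, it controls only one direction of the $(v_t,v)$-plane, so one must carefully combine it with the deterministic terms so that $\theta^2 w_t^2$ and $\lambda^3 v^2$ are \emph{simultaneously} dominated in the degenerate characteristic regime where $\mathcal{M}_1$ is merely nonnegative and $\mathcal{D}_2 + \mathcal{D}_3$ may vanish; keeping track of every $\lambda$-power through this completion of squares is the delicate core of the argument.
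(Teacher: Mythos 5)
Your skeleton coincides exactly with the paper's proof: substitute \eqref{w-e} into the Carleman estimate \eqref{35} with $\varrho=\varrho_1$, discard the divergence, temporal-boundary and martingale contributions using the support of $w=\chi u$ and the expectation, control the commutator source by Young's inequality, invoke \eqref{M_1}, and harvest the quadratic-variation term $\ell_t(dv_t)^2=\lambda\phi_t\,\Sigma^2\,dt$ with your $\Sigma$. But at the decisive point you stop: you observe, correctly, that $\lambda\phi_t\bigl(b_1v_t+(b_2-b_1\ell_t)v\bigr)^2$ is a rank-one quadratic form in $(v,v_t)$ and hence cannot pointwise dominate both $\lambda v_t^2$ and $\lambda^3 v^2$, and you leave the required ``interlacing'' unresolved, naming it the delicate core. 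This is a genuine gap, because there is no other source of positivity available: \eqref{D_1} and \eqref{3.8} are lower bounds only on the combinations $\frac12\phi_t^3c_1^2+\mathcal{D}_2+\mathcal{D}_3$ and $\frac12\phi_t c_1^2-11\mu$, so \emph{both} diagonal pieces of the square are indispensable, and any pointwise Cauchy--Schwarz treatment of the cross term destroys one of them: weighting it by $\delta<1$ preserves the $v_t^2$ piece but produces a negative $O(\lambda^3)v^2$ that swamps \eqref{D_1}, while $\delta>1$ preserves the $v^2$ piece but makes the $v_t^2$ coefficient negative, defeating \eqref{3.8}.

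The paper's resolution (see \eqref{dv_t}) is an integration by parts in time rather than a pointwise inequality. Expanding the square yields $\frac12\lambda\phi_t b_1^2v_t^2+\frac12\lambda^3\phi_t^3b_1^2v^2-O(\lambda^2)v^2$ together with the cross term $\lambda\phi_t b_1(b_2-b_1\lambda\phi_t)\,v v_t$; since $v v_t=\frac12(v^2)_t$, this cross term equals the exact derivative $\bigl[\frac12\lambda\phi_t(b_2-b_1\lambda\phi_t)b_1 v^2\bigr]_t$ minus $\frac12\,\partial_t\bigl[\lambda\phi_t(b_2-b_1\lambda\phi_t)b_1\bigr]v^2=O(\lambda^2)v^2$. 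The exact derivative integrates to zero because $v=\theta\chi u$ is compactly supported in time, and the $O(\lambda^2)v^2$ remainder is absorbed by the $\lambda^3 v^2$ positivity for $\lambda$ large. Note that this step is precisely where the standing regularity hypothesis $b_1,b_2\in W^{1,\infty}_{\mathbb{F}}(0,T;L^\infty(G))$ is consumed: one needs $\partial_t$ of the coefficient to be of order $\lambda^2$. With this single maneuver both $\frac12\lambda\phi_t c_1^2 v_t^2$ and $\frac12\lambda^3\phi_t^3c_1^2v^2$ survive simultaneously, \eqref{D_1} and \eqref{3.8} become applicable as in \eqref{Car1}--\eqref{Car1-1}, and the rest of your argument (absorption of lower-order terms for large $\lambda$ and the return to $w$ via $\theta w_t=v_t-\ell_t v$) goes through exactly as you describe.
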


\begin{proof}[Proof of Theorem \ref{main_theorem_1} from Theorem \ref{th3-1}]
From the definition of $\chi$ in \eqref{chi} we obtain
$$
\lambda^3 e^{\lambda (c_2+2\varepsilon)}\,
\mathbb{E}\int_{\widetilde{Q}_{1\varepsilon}} u^2\,dt\,dx
\le C e^{\lambda(c_2+\varepsilon)}\,
\mathbb{E}\int_{\widetilde{Q_1}} \bigl(u^2+u_t^2+|\nabla u|^2\bigr)\,dt\,dx,
$$
where
$$
\widetilde{Q}_{1\varepsilon}\coloneqq
\bigl\{(t,x)\mid \phi(t,x)>c_2+2\varepsilon\;\text{and}\;\rho(t,x)<0\bigr\}.
$$
Letting $\lambda\to+\infty$ forces $u\equiv 0$ on $\widetilde{Q}_{1\varepsilon}$. Sending $\varepsilon\to 0$ then completes the proof.
\end{proof}

\begin{proof}[Proof of Theorem \ref{th3-1}]
Substituting \eqref{w-e} into \eqref{35} yields\vspace{-2mm}
\begin{align}\label{316}
&\mathbb{E}\int_{\widetilde{Q_1}}\!\Big[2\lambda\gamma\psi\bigl(v_t,\nabla v^\top\bigr)\mathcal{M}_1\bigl(v_t,\nabla v^\top\bigr)^\top
-\frac{10\lambda}{\mu}v_t^2+\frac{2\lambda}{\mu}|\nabla v|^2
+O(1)\bigl(v_t^2+|\nabla v|^2\bigr)\Big]dt\,dx \notag \\
&+\mathbb{E}\int_{\widetilde{Q_1}}\bigl[\lambda^3\mathcal{D}_2+\lambda^3\mathcal{D}_3+O(\lambda^2)\bigr]v^2\,dt\,dx
+\mathbb{E}\int_{\widetilde{Q_1}}\ell_t(dv_t)^2\,dx \\
&\le \frac12\,\mathbb{E}\int_{\widetilde{Q_1}}\!\theta^2\Big[-a_1\chi_t u-(\boldsymbol a_2\cdot\nabla\chi)u
+\chi_{tt}u+2\chi_t u_t-2\nabla\chi\cdot\nabla u+\Delta\chi u\Big]^2\,dt\,dx .\no 
\end{align}
Using the inequality \eqref{M_1} in \eqref{316} gives\vspace{-2mm}
\begin{align}\label{Car}
&\mathbb{E}\int_{\widetilde{Q_1}}\!\Big[-\frac{11\lambda}{\mu}v_t^2+\frac{\lambda}{\mu}|\nabla v|^2
+O(1)\bigl(v_t^2+|\nabla v|^2\bigr)\Big]dt\,dx \notag \\
&+\mathbb{E}\int_{\widetilde{Q_1}}\bigl[\lambda^3\mathcal{D}_2+\lambda^3\mathcal{D}_3+O(\lambda^2)\bigr]v^2\,dt\,dx
+\mathbb{E}\int_{\widetilde{Q_1}}\ell_t(dv_t)^2\,dx \\
&\le \frac12\,\mathbb{E}\int_{\widetilde{Q_1}}\!\theta^2\Big[-a_1\chi_t u-(\boldsymbol a_2\cdot\nabla\chi)u
+\chi_{tt}u+2\chi_t u_t-2\nabla\chi\cdot\nabla u+\Delta\chi u\Big]^2\,dt\,dx . \no 
\end{align}
We now employ the term $\ell_t(dv_t)^2$ from the pointwise identity \eqref{pointwise_identity} to produce positivity on the left‑hand side. Observe that\vspace{-2mm}
\begin{align}\label{dv_t-1}
\ell_t(dv_t)^2 = \lambda\phi_t\Big[\bigl(b_2-b_1\ell_t\bigr)v+b_1 v_t+\theta\bigl(-b_1\chi_t u\bigr)\Big]^2\,dt.
\end{align}
Hence,\vspace{-2mm}
\begin{align}\label{dv_t}
&\lambda\phi_t\Big[\bigl(b_2-b_1\ell_t\bigr)v+b_1 v_t+\theta\bigl(-b_1\chi_t u\bigr)\Big]^2 \notag \\
&\ge \frac12\lambda\phi_t\Big[\bigl(b_2-b_1\ell_t\bigr)v+b_1 v_t\Big]^2
-\theta^2\bigl(b_1\chi_t\bigr)^2 u^2   \\
&\ge \frac12\lambda\phi_t\,b_1^2 v_t^2 + \frac12\lambda^3 b_1^2\phi_t^3 v^2
- O(\lambda^2)v^2+\lambda\phi_t(b_2-b_1\lambda\phi_t)b_1 v v_t
-\theta^2\bigl(b_1\chi_t\bigr)^2 u^2  \no  \\
&= \frac12\lambda\phi_t c_1^2 v_t^2 + \frac12\lambda^3\phi_t^3 c_1^2 v^2
+\Bigl[\frac12\lambda\phi_t(b_2-b_1\lambda\phi_t)b_1 v^2\Bigr]_t
- O(\lambda^2)v^2
-\theta^2\bigl(b_1\chi_t\bigr)^2 u^2 .\no 
\end{align}
Inserting \eqref{dv_t} into \eqref{Car} we obtain\vspace{-2mm}
\begin{align}\label{Car1}
&\mathbb{E}\int_{\widetilde{Q_1}}\!\Big[\frac12\lambda\phi_t c_1^2 v_t^2
-\frac{11\lambda}{\mu}v_t^2+\frac{\lambda}{\mu}|\nabla v|^2
+O(1)\bigl(v_t^2+|\nabla v|^2\bigr)\Big]dt\,dx \notag \\
&+\mathbb{E}\int_{\widetilde{Q_1}}\!\Big[\frac12\lambda^3\phi_t^3 c_1^2
+\lambda^3\mathcal{D}_2+\lambda^3\mathcal{D}_3+O(\lambda^2)\Big]v^2\,dt\,dx \\
&\le \frac12\,\mathbb{E}\int_{\widetilde{Q_1}}\!\theta^2\Big[-a_1\chi_t u-(\boldsymbol a_2\cdot\nabla\chi)u
+\chi_{tt}u+2\chi_t u_t-2\nabla\chi\cdot\nabla u+\Delta\chi u\Big]^2\,dt\,dx \notag \\
&\quad +\mathbb{E}\int_{\widetilde{Q_1}}\theta^2\bigl(b_1\chi_t\bigr)^2 u^2\,dt\,dx .\no 
\end{align}
Finally, using \eqref{D_1} and \eqref{3.8},
\begin{align}\label{Car1-1}
&\mathbb{E}\int_{\widetilde{Q_1}}\!\Big[\frac{\gamma c_0 c_1^2}{4}\lambda v_t^2
+\frac{\lambda}{\mu}|\nabla v|^2+O(1)\bigl(v_t^2+|\nabla v|^2\bigr)\Big]dt dx  +\mathbb{E}\int_{\widetilde{Q_1}}\!\Big(\frac{\gamma^3c_0^3 c_1^2}{4}\lambda^3+O(\lambda^2)\Big)v^2 dt dx \notag\\
&\le \frac12\,\mathbb{E}\int_{\widetilde{Q_1}}\!\theta^2\big[-a_1\chi_t u-(\boldsymbol a_2\cdot\nabla\chi)u
+\chi_{tt}u+2\chi_t u_t-2\nabla\chi\cdot\nabla u+\Delta\chi u\big]^2\,dt\,dx  \\
&\quad +\mathbb{E}\int_{\widetilde{Q_1}}\theta^2\bigl(b_1\chi_t\bigr)^2 u^2\,dt\,dx .\notag
\end{align}
This directly implies the estimate \eqref{35-1}.
\end{proof}

\begin{proof}[Proof of Corollary \ref{cor1}]
We verify that Assumption \ref{con1} remains valid in the present setting.

Because $\rho_t(t_0,x_0)\ge c_0$, the inverse function theorem allows us to write $\rho$ locally near $(t_0,x_0)$ as\vspace{-2mm}
$$
\rho(t,x)=t-g(x)\vspace{-2mm}
$$
for some $C^2$ function $g$. On the surface $\Gamma$ near $(t_0,x_0)$ we have $|\nabla g|^2=1$; consequently both $(1,-\nabla g)$ and $\bigl(0,\nabla(|\nabla g|^2)\bigr)$ are normal vectors to $\Gamma$ near $(t_0,x_0)$. This implies
\begin{equation}\label{a1}
\nabla\bigl(|\nabla g|^2\bigr)=0 \quad\text{at }(t_0,x_0).
\end{equation}
We may replace $\rho$ by a different representation that leaves $\Gamma$ unchanged locally:\vspace{-2mm}
$$
\rho(t,x)=e^{\tau t}-e^{\tau g(x)},\vspace{-2mm}
$$
where the constant $\tau>0$ is chosen so that the matrix\vspace{-2mm}
\begin{equation}\label{a2}
I_{n\times n}-\tau^{-1}\bigl(g_{x_jx_k}(t_0,x_0)\bigr)_{1\le j,k\le n}\vspace{-2mm}
\end{equation}
is positive definite.

Take $\varrho=\rho_{tt}$ in \eqref{ii-1}. To check $\mathcal{M}(\rho_{tt})\ge 0$, it suffices to show that the matrix
$$
\widetilde{\mathcal{M}}\=\rho_{tt}\,I_{n\times n}+\bigl(\rho_{x_jx_k}\bigr)_{1\le j,k\le n}
$$
is non‑negative at $(t_0,x_0)$. Direct computation gives\vspace{-2mm}
$$
\rho_{tt}=\tau^{2}e^{\tau t},\qquad
\rho_{x_jx_k}=-\tau^{2}e^{\tau g}g_{x_j}g_{x_k}-\tau e^{\tau g}g_{x_jx_k}.\vspace{-2mm}
$$
Hence at $(t_0,x_0)$\vspace{-2mm}
$$
\widetilde{\mathcal{M}}(t_0,x_0)=
\tau^{2}e^{\tau t_0}\Bigl[
I_{n\times n}
-\nabla g(t_0,x_0)\nabla g(t_0,x_0)^\top
-\tau^{-1}\bigl(g_{x_jx_k}(t_0,x_0)\bigr)_{1\le j,k\le n}
\Bigr].\vspace{-2mm}
$$
Let $V$ be the orthogonal complement of $\operatorname{span}\{\nabla g(t_0,x_0)\}$ in $\mathbb{R}^n$. Any vector $\bm x\in\mathbb{R}^n$ can be decomposed as $\bm x=\bm y+k\nabla g(t_0,x_0)$ with $\bm y\in V$ and $k\in\mathbb{R}$, satisfying $\bm y\cdot\nabla g(t_0,x_0)=0$. Using \eqref{a1}, \eqref{a2} and this decomposition we obtain
\begin{align}\label{a4}
&(\bm y+k\nabla g(t_0,x_0))^\top\widetilde{\mathcal{M}}(t_0,x_0)(\bm y+k\nabla g(t_0,x_0)) \notag\\
&  =\tau^{2}e^{\tau t_0}\,\bm y^\top\Bigl[
I_{n\times n}-\tau^{-1}\bigl(g_{x_jx_k}(t_0,x_0)\bigr)_{1\le j,k\le n}
\Bigr]\bm y\ge 0 .
\end{align}
Thus Assumption \ref{con1} is satisfied, and the unique continuation property follows from Theorem \ref{main_theorem_1}.
\end{proof} 

\section{Proof  of Theorem \ref{main_theorem_2}}\label{sec-proof-th2}

In this section, we employ Assumption \ref{con2} to ensure positivity of the energy terms on the right‑hand side of \eqref{35}, thereby obtaining a Carleman estimate suitable for proving Theorem \ref{main_theorem_2}. Throughout this section we set $\varrho = \varrho_2$ and write $\mathcal{M}_2 = \mathcal{M}(\varrho_2)$.

Following the same line as in Lemma \ref{Lem-D_2}, Assumption \ref{con2} together with the relation $\phi_t(t_0,x_0) = \gamma \rho_t(t_0,x_0)$ allows us to choose $\gamma$ so that
\begin{equation}\label{4.2}
\mathcal{D}_2(t_0,x_0)-3|\phi_t(t_0,x_0)|^3\|b_1\|_{L_{\mathbb{F}}^\infty(0,T;L^\infty(G))}^2 > 0.
\end{equation}
Next we fix the parameter $\mu$ in a way similar to \eqref{mu_1} and \eqref{3.6}. Choose $\mu$ and a small constant $\delta > 0$ such that\vspace{-2mm}
\begin{equation}
\mathcal{D}_2(t_0,x_0)+\mathcal{D}_3(t_0,x_0)-3|\phi_t(t_0,x_0)|^3\|b_1\|_{L_{\mathbb{F}}^\infty(0,T;L^\infty(G))}^2 > \delta,\vspace{-2mm}
\end{equation}
and\vspace{-2mm}
\begin{equation}\label{4.4}
2\gamma\psi(t_0,x_0)\bigl(v_t,\nabla v^\top\bigr)\mathcal{M}_2(t_0,x_0)\bigl(v_t,\nabla v^\top\bigr)^\top
-3|\phi_t(t_0,x_0)|\,\|b_1\|_{L_{\mathbb{F}}^\infty(0,T;L^\infty(G))}^2\,v_t^2
-\frac{10}{\mu}v_t^2 > \delta v_t^2.\vspace{-2mm}
\end{equation}
Once $\gamma$ and $\mu$ are chosen as above, we select a neighbourhood $\widetilde Q_1 \subset Q_0$ of $(t_0,x_0)$ such that for all $(t,x)\in\widetilde Q_1$,
\begin{equation}\label{p1}
\mathcal{D}_2+\mathcal{D}_3-3|\phi_t|^3\|b_1\|_{L_{\mathbb{F}}^\infty(0,T;L^\infty(G))}^2 \ge \delta,
\end{equation}
and
\begin{equation}\label{p}
2\gamma\psi\bigl(v_t,\nabla v^\top\bigr)\mathcal{M}_2\bigl(v_t,\nabla v^\top\bigr)^\top
-3|\phi_t|\,\|b_1\|_{L_{\mathbb{F}}^\infty(0,T;L^\infty(G))}^2\,v_t^2
-\frac{10}{\mu}v_t^2 \ge \delta v_t^2 .
\end{equation}
Let $c_2<1$ be the smallest constant for which
$$
\bigl\{(t,x)\mid \phi(t,x) > c_2\bigr\}
\subset \widetilde Q_1 \cup \{(t,x)\mid \rho(t,x)>0\}.
$$

We now define the neighbourhood $Q_1$ as
\begin{equation}\label{Q_1-1}
Q_1 \=
\big[ Q_0 \cap \{(t,x)\mid \rho(t,x)>0\} \big]
\;\cup\;
\big\{\,(t,x)\mid \phi(t,x) > c_2 \text{ and } \rho(t,x)\le 0\,\big\}.
\end{equation}
As before, introduce a cut‑off function $\chi:\mathbb{R}^{1+n}\to\mathbb{R}$ satisfying
\begin{equation}\label{chi-1}
\chi(t,x)=1 \quad\text{on } \{(t,x)\mid\phi(t,x)>c_2+\varepsilon\},
\qquad
\chi(t,x)=0 \quad\text{on } \{(t,x)\mid\phi(t,x)<c_2\}.
\end{equation}
 Set $w \= \chi u$ and $v \= \th w = e^\ell w$. A computation analogous to \eqref{w-e} gives
\begin{align}\label{w-e-1}
&\theta\bigl(dw_t-\Delta w\,dt\bigr) \notag \\
&= dv_t-\Delta v\,dt
+\big[(\ell_t^2-|\nabla\ell|^2)v-(\ell_{tt}-\Delta\ell)v-2\ell_t v_t+2\nabla\ell\cdot\nabla v\big]dt \notag \\
&= \bigl[(a_3-a_1\ell_t-\boldsymbol a_2\cdot\nabla\ell)v+a_1 v_t+\boldsymbol a_2\cdot\nabla v\bigr]dt \notag \\
&\quad+\theta\big[-a_1\chi_t u-(\boldsymbol a_2\cdot\nabla\chi)u
+\chi_{tt}u+2\chi_t u_t-2\nabla\chi\cdot\nabla u+\Delta\chi u\big]dt \\
&\quad+\bigl[(b_2-b_1\ell_t)v+b_1 v_t\bigr]dW(t)  +\theta\bigl(-b_1\chi_t u\bigr)dW(t).\notag
\end{align}

The proof of Theorem \ref{main_theorem_2} follows immediately from the following Carleman estimate, in the same way as Theorem \ref{main_theorem_1}.
\begin{theorem}\label{th3-2}
Suppose the parameters $\gamma,\mu$ are chosen according to \eqref{4.2}--\eqref{4.4}. For every $\varepsilon>0$ there exist constants $\lambda_0>0$ and $C>0$ such that for any solution $w$ of \eqref{w-e-1} and all $\lambda\ge\lambda_0$,\vspace{-2mm}
\begin{align}\label{35-1-1}
&\mathbb{E}\int_{\widetilde Q_1} \theta^2 \bigl(\lambda^3 w^2+\lambda w_t^2\bigr)\,dt\,dx \notag \\
&\le C\,\mathbb{E}\int_{\widetilde Q_1} \theta^2\big[-a_1\chi_t u-(\boldsymbol a_2\cdot\nabla\chi)u
+\chi_{tt}u+2\chi_t u_t-2\nabla\chi\cdot\nabla u+\Delta\chi u\big]^2 dt dx  \\
&\quad +C\lambda\,\mathbb{E}\int_{\widetilde Q_1} \theta^2\bigl(b_1\chi_t u\bigr)^2\,dt\,dx.\notag
\end{align}
\end{theorem}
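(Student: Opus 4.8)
The plan is to follow the proof of Theorem~\ref{th3-1} line by line, the only genuinely new ingredient being the treatment of the It\^o term $\ell_t(dv_t)^2$. First I would substitute the identity \eqref{w-e-1} into the master Carleman estimate \eqref{35}, now with $\varrho=\varrho_2$ and $\mathcal M_2=\mathcal M(\varrho_2)$. Taking expectations kills the stochastic integral; the divergence term $\nabla\cdot\mathcal V$ and the exact differential $d\mathcal N$ integrate to zero over $\widetilde Q_1$ because $\chi$ (hence $v=\theta\chi u$) is supported where $\phi\ge c_2$, so the relevant boundary and endpoint contributions vanish; and the multiplier $\theta(-2\ell_t v_t+2\nabla\ell\cdot\nabla v+\Psi v)$ paired with the drift part of $dw_t-\Delta w\,dt$ is absorbed, via Young's inequality, into half of the square $(-2\ell_t v_t+2\nabla\ell\cdot\nabla v+\Psi v)^2$ and into the leading energy terms. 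What survives on the right is the cut-off commutator $\theta[-a_1\chi_t u-(\boldsymbol a_2\cdot\nabla\chi)u+\chi_{tt}u+2\chi_t u_t-2\nabla\chi\cdot\nabla u+\Delta\chi u]$, supported in $\{c_2<\phi<c_2+\varepsilon\}$. This reproduces the exact analogue of \eqref{316}, with $\mathcal M_1$ replaced by $\mathcal M_2$.

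The decisive departure from Theorem~\ref{th3-1} is the sign of $\ell_t$. There one had $\phi_t\ge\gamma c_0>0$ near $(t_0,x_0)$, so $\ell_t=\lambda\phi_t>0$ and $\ell_t(dv_t)^2$ was a genuine positive contribution, exploited in \eqref{dv_t}. Under Assumption~\ref{con2} no sign of $\rho_t$ is postulated, so $\phi_t$ (and $\ell_t$) may be negative and $\ell_t(dv_t)^2$ becomes a liability. I would therefore bound it from below by
$$
\ell_t(dv_t)^2\ge-\lambda|\phi_t|\big[(b_2-b_1\ell_t)v+b_1v_t-\theta b_1\chi_t u\big]^2\,dt,
$$
and then apply $(a+b+c)^2\le 3(a^2+b^2+c^2)$ to split the square. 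This produces precisely the three terms whose coefficients are calibrated by Assumption~\ref{con2}: a $v_t^2$-loss at most $3\lambda|\phi_t|\,\|b_1\|_{L^\infty_{\mathbb F}(0,T;L^\infty(G))}^2 v_t^2$, a $v^2$-loss of leading order $3\lambda^3|\phi_t|^3\|b_1\|_{L^\infty_{\mathbb F}(0,T;L^\infty(G))}^2 v^2$ (arising from $(b_2-b_1\ell_t)^2\sim b_1^2\ell_t^2=b_1^2\lambda^2\phi_t^2$), and a cut-off remainder at most $C\lambda\,\theta^2(b_1\chi_t)^2u^2$ which is moved to the right-hand side. This last term is the source of the extra factor $\lambda$ in \eqref{35-1-1} relative to \eqref{35-1}, and it is harmless for the subsequent continuation argument, since $\lambda^3 e^{\lambda(c_2+2\varepsilon)}$ still dominates $C\lambda\,e^{\lambda(c_2+\varepsilon)}$ as $\lambda\to\infty$.

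With these losses identified, the conclusion is a matter of bookkeeping. The $v_t^2$-coefficient becomes $2\lambda\gamma\psi$ times the $\mathcal M_2$-form minus $\tfrac{10\lambda}{\mu}$ minus $3\lambda|\phi_t|\|b_1\|^2$ plus $O(1)$, which is $\ge\lambda\delta v_t^2$ by \eqref{p}; the $|\nabla v|^2$-positivity is supplied by the surviving $\mu$-term of \eqref{35} together with the positive definiteness built into Assumption~\ref{con2}; and the $v^2$-coefficient is $\lambda^3(\mathcal D_2+\mathcal D_3-3|\phi_t|^3\|b_1\|^2)+O(\lambda^2)\ge\lambda^3\delta+O(\lambda^2)$ by \eqref{p1}. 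Taking $\lambda$ large enough makes the left side dominate $\lambda^3\mathbb E\!\int v^2+\lambda\mathbb E\!\int v_t^2$, and the standard substitution $v=\theta w$ (using $\theta w_t=v_t-\ell_t v$, whence $\theta^2w_t^2\le 2v_t^2+2\lambda^2\phi_t^2v^2$ and $\theta^2w^2=v^2$, with $\phi_t$ bounded on the compact $\widetilde Q_1$) converts this into the asserted bound on $\theta^2(\lambda^3w^2+\lambda w_t^2)$.

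The hard part will be controlling $\ell_t(dv_t)^2$ in the absence of a sign for $\phi_t$: everything hinges on verifying that the strengthened positivity in Assumption~\ref{con2}—the subtraction of $3|\rho_t|\|b_1\|^2 I$—is exactly what is needed to dominate the worst-case negative contributions $3\lambda|\phi_t|\|b_1\|^2 v_t^2$ and $3\lambda^3|\phi_t|^3\|b_1\|^2 v^2$ generated by the crude split $(a+b+c)^2\le 3(a^2+b^2+c^2)$; indeed the constant $3$ in the assumption is precisely the constant emerging there. I would finally check that continuity of all coefficients lets the pointwise inequalities \eqref{p}--\eqref{p1}, imposed at $(t_0,x_0)$, persist on a full neighbourhood $\widetilde Q_1$, so that the coefficient estimates hold uniformly over the region of integration.
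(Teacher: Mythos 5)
Your proposal is correct and takes essentially the same approach as the paper's own proof: you substitute \eqref{w-e-1} into \eqref{35} with $\varrho=\varrho_2$, bound $\ell_t(dv_t)^2$ from below via $\ell_t\ge-\lambda|\phi_t|$ and $(a+b+c)^2\le 3(a^2+b^2+c^2)$ to produce exactly the paper's losses $-3\lambda|\phi_t|b_1^2v_t^2-3\lambda^3 b_1^2|\phi_t|^3v^2-O(\lambda^2)v^2-3\lambda|\phi_t|\theta^2(b_1\chi_t)^2u^2$ as in \eqref{dv_t-2}, and absorb them using \eqref{p} and \eqref{p1}. Your observations that the constant $3$ in Assumption \ref{con2} is precisely the constant emerging from this crude split, and that the cut-off remainder is the source of the extra factor $\lambda$ in \eqref{35-1-1} relative to \eqref{35-1}, match the paper's argument exactly.
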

\begin{proof}[Proof of Theorem \ref{th3-2}]
From \eqref{35} we obtain
\begin{align}\label{Car-1}
&\mathbb{E}\int_{Q_1}\!\Big[2\lambda\gamma\psi\bigl(v_t,\nabla v^\top\bigr)\mathcal{M}_2\bigl(v_t,\nabla v^\top\bigr)^\top
-\frac{10\lambda}{\mu}v_t^2+\frac{2\lambda}{\mu}|\nabla v|^2
+O(1)\bigl(v_t^2+|\nabla v|^2\bigr)\Big]dt\,dx \notag \\
&+\mathbb{E}\int_{Q_1}\lambda^3\bigl(\mathcal{D}_2+\mathcal{D}_3+O(\lambda^{-2})\bigr)v^2\,dt\,dx  +\mathbb{E}\int_{Q_1}\ell_t(dv_t)^2\,dx \\
&\le \frac12\,\mathbb{E}\int_{Q_1}\!\theta^2\big[-a_1\chi_t u-(\boldsymbol a_2\cdot\nabla\chi)u
+\chi_{tt}u+2\chi_t u_t-2\nabla\chi\cdot\nabla u+\Delta\chi u\big]^2 dt dx.\notag
\end{align}
Now we estimate the term $\ell_t(dv_t)^2$. Using \eqref{dv_t-1} we have\vspace{-2mm}
\begin{align}\label{dv_t-2}
&\lambda\phi_t\big[\bigl(b_2-b_1\ell_t\bigr)v+b_1 v_t+\theta\bigl(-b_1\chi_t u\bigr)\big]^2 \notag \\
&\ge -3\lambda|\phi_t|\big[\bigl(b_2-b_1\ell_t\bigr)^2v^2+b_1^2 v_t^2+\theta^2\bigl(b_1\chi_t\bigr)^2u^2\big] \\
&\ge -3\lambda|\phi_t|\,b_1^2 v_t^2-3\lambda^3 b_1^2|\phi_t|^3 v^2-O(\lambda^2)v^2
-3\lambda|\phi_t|\,\theta^2\bigl(b_1\chi_t\bigr)^2u^2 . \notag
\end{align}
Combining \eqref{Car-1} with \eqref{p1}, \eqref{p} and \eqref{dv_t-2} yields the desired estimate \eqref{35-1-1}.
\end{proof}

\section{Proof Theorem \ref{main_theorem_3}}\label{sec-proof-th3}

In this section we present the proof of Theorem \ref{main_theorem_3}. Here we take $\varrho = \rho_3$ and write $\mathcal{M}_3 = \mathcal{M}(\varrho_3)$.

Following the same argument as in Lemma \ref{Lem-D_2}, Assumption \ref{con2} allows us to choose $\gamma$ such that
\begin{equation}\label{4.2-1}
\mathcal{D}_2(t_0,x_0) > 0.
\end{equation}
Next we fix the parameter $\mu$. Choose $\mu$ and a small constant $\delta > 0$ satisfying\vspace{-2mm}
$$
\mathcal{D}_2(t_0,x_0)+\mathcal{D}_3(t_0,x_0) > \delta,\vspace{-2mm}
$$
and\vspace{-2mm}
\begin{equation}\label{4.4-1}
2\gamma\psi(t_0,x_0)\bigl(v_t,\nabla v^\top\bigr)\mathcal{M}_3(t_0,x_0)\bigl(v_t,\nabla v^\top\bigr)^\top
- \frac{10}{\mu}v_t^2 > \delta v_t^2.\vspace{-2mm}
\end{equation}
Once $\gamma$ and $\mu$ are chosen in this way, we can pick a neighbourhood $\widetilde Q_1\subset Q_0$ of $(t_0,x_0)$ such that for all $(t,x)\in\widetilde Q_1$,\vspace{-2mm}
\begin{gather}
\label{4.5-1}
\mathcal{D}_2+\mathcal{D}_3 \ge \delta,\\[4pt]
\label{4.6-1}
2\gamma\psi\bigl(v_t,\nabla v^\top\bigr)\mathcal{M}_3\bigl(v_t,\nabla v^\top\bigr)^\top
- \frac{10}{\mu}v_t^2 \ge \delta v_t^2,\vspace{-2mm}
\end{gather}
and, recalling that $\phi_t(t_0,x_0)=\gamma\rho_t(t_0,x_0)\ge\gamma c_0$,\vspace{-2mm}
\begin{equation}\label{5.6}
\phi_t(t,x) \ge \delta.\vspace{-2mm}
\end{equation}
Let $c_2<1$ be the largest constant for which
$$
\bigl\{(t,x)\mid \phi(t,x) > c_2\bigr\}
\subset \widetilde Q_1 \cup \{(t,x)\mid \rho(t,x)>0\}.
$$

We then define the neighbourhood $Q_1$ as\vspace{-2mm}
\begin{equation}\label{Q_1-1-1}
Q_1 \=
\big[\,Q_0 \cap \{(t,x)\mid \rho(t,x)>0\}\,\big]
\;\cup\;
\big\{\,(t,x)\mid \phi(t,x) > c_2 \text{ and } \rho(t,x)\le 0\,\big\}.\vspace{-2mm}
\end{equation}
Introduce a cut‑off function $\chi:\mathbb{R}^{1+n}\to\mathbb{R}$ by\vspace{-2mm}
\begin{equation}\label{chi-1-1}
\chi(t,x)=1 \quad\text{on } \big\{(t,x)\mid\phi(t,x)>c_2+\varepsilon\big\},
\qquad
\chi(t,x)=0 \quad\text{on } \big\{(t,x)\mid\phi(t,x)<c_2\big\}.\vspace{-2mm}
\end{equation}
Set $w \= \chi u$ and $v \= \th w = e^\ell w$.  A computation similar to \eqref{w-e} gives
\begin{align}\label{w-e-1-1}
&\theta\bigl(dw_t-\Delta w\,dt\bigr) \notag \\
&= dv_t-\Delta v\,dt
+\big[(\ell_t^2-|\nabla\ell|^2)v-(\ell_{tt}-\Delta\ell)v-2\ell_t v_t+2\nabla\ell\cdot\nabla v\big]dt \notag \\
&= \bigl[(a_3-a_1\ell_t-\boldsymbol a_2\cdot\nabla\ell)v+a_1 v_t+\boldsymbol a_2\cdot\nabla v\bigr]dt \notag \\
&\quad+\theta\big[-a_1\chi_t u-(\boldsymbol a_2\cdot\nabla\chi)u
+\chi_{tt}u+2\chi_t u_t-2\nabla\chi\cdot\nabla u+\Delta\chi u\big]dt \\
&\quad+\bigl[(b_2-b_1\ell_t)v+b_1 v_t+\theta\chi f\bigr]dW(t)  +\theta\bigl(-b_1\chi_t u\bigr)dW(t).\notag
\end{align}

Similar to the proof of Theorem \ref{main_theorem_1}, Theorem \ref{main_theorem_3} follows at once from the following Carleman estimate.
\begin{theorem}\label{th3-3}
Suppose the parameters $\gamma,\mu$ are chosen according to \eqref{4.2-1}–\eqref{4.4-1}. For every $\varepsilon>0$ there exist constants $\lambda_0>0$ and $C>0$ such that for any solution $w$ of \eqref{w-e-1-1} and all $\lambda\ge\lambda_0$,
\begin{align}\label{35-1-1-1}
&\mathbb{E}\int_{\widetilde Q_1} \theta^2\bigl(\lambda^3 w^2+\lambda w_t^2\bigr)\,dt\,dx
+\mathbb{E}\int_{\widetilde Q_1} \bigl[(b_2-b_1\ell_t)v+b_1 v_t+\theta\chi f\bigr]^2\,dt\,dx \notag \\
&\le C\,\mathbb{E}\int_{\widetilde Q_1} \theta^2\Big[-a_1\chi_t u-(\boldsymbol a_2\cdot\nabla\chi)u
+\chi_{tt}u+2\chi_t u_t-2\nabla\chi\cdot\nabla u+\Delta\chi u\Big]^2\,dt\,dx \\
&\quad +C\lambda\,\mathbb{E}\int_{\widetilde Q_1} \theta^2\bigl(b_1\chi_t u\bigr)^2\,dt\,dx.\notag 
\end{align}
\end{theorem}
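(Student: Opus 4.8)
The plan is to mirror the proofs of Theorems \ref{th3-1} and \ref{th3-2}: substitute the expansion \eqref{w-e-1-1} into the master Carleman estimate \eqref{35} and extract positivity on the left-hand side using Assumption \ref{con3}. The decisive new feature is that the source $f$ now sits inside the diffusion coefficient of \eqref{w-e-1-1}, hence inside the quadratic variation $\ell_t(dv_t)^2$; the aim is to retain the entire square $\bigl[(b_2-b_1\ell_t)v + b_1 v_t + \theta\chi f\bigr]^2$ on the left, as this is precisely the term that will subsequently force $f\equiv 0$ and $u\equiv 0$ together.

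First I would substitute \eqref{w-e-1-1} into \eqref{35} to obtain an inequality of the same shape as \eqref{Car-1}. Because Assumption \ref{con3} guarantees that $\mathcal{M}_3=\mathcal{M}(\varrho_3)$ is \emph{positive definite}, the inequalities \eqref{4.6-1} and \eqref{4.5-1} immediately furnish positivity of the $v_t^2$ and $|\nabla v|^2$ contributions together with a positive $\lambda^3\delta$ coefficient in front of $v^2$. This is the crucial structural difference from Theorem \ref{th3-1}: there $\mathcal{M}_1\ge 0$ only, so the quadratic variation had to be spent supplying the missing $v_t^2$ positivity, whereas here that positivity is already self-contained and the quadratic variation is left entirely free to capture $f$.

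The key step is the treatment of $\ell_t(dv_t)^2 = \lambda\phi_t\bigl[(b_2-b_1\ell_t)v + b_1 v_t + \theta\chi f - \theta b_1\chi_t u\bigr]^2\,dt$. Writing $A=(b_2-b_1\ell_t)v + b_1 v_t + \theta\chi f$ and $B=-\theta b_1\chi_t u$, the elementary bound $(A+B)^2\ge\tfrac12 A^2-B^2$ yields
$$
\ell_t(dv_t)^2 \ge \frac{\lambda\phi_t}{2}\bigl[(b_2-b_1\ell_t)v + b_1 v_t + \theta\chi f\bigr]^2\,dt - \lambda\phi_t\,\theta^2 b_1^2\chi_t^2 u^2\,dt .
$$
Here the sign condition $\phi_t\ge\delta>0$ from \eqref{5.6} — itself a consequence of the hypothesis $\rho_t(t_0,x_0)\ge c_0$ in Assumption \ref{con3} — is what makes everything work: once $\tfrac12\lambda\delta\ge 1$ one has $\tfrac12\lambda\phi_t\ge 1$, so the displayed square may be kept on the left with coefficient at least one, producing exactly the extra integral in \eqref{35-1-1-1}. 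Note that, unlike in Theorem \ref{th3-1}, no algebraic expansion of $A^2$ into total derivatives is needed, since the square is preserved whole. The residual term $\lambda\phi_t\theta^2 b_1^2\chi_t^2 u^2$ is supported where $c_2<\phi<c_2+\varepsilon$ and bounded there, so it passes to the right-hand side as the contribution $C\lambda\,\mathbb{E}\int_{\widetilde Q_1}\theta^2(b_1\chi_t u)^2$.

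Finally I would convert back from $v$ to $w$ through $v=\theta w$, so that the surviving $\lambda^3 v^2$, $\lambda v_t^2$ and $\lambda|\nabla v|^2$ terms assemble into $\theta^2(\lambda^3 w^2+\lambda w_t^2)$ after absorbing the lower-order $O(\lambda^2)$ and $O(1)$ remainders for $\lambda$ large, while the cut-off contributions from the drift gather into the first integral on the right of \eqref{35-1-1-1} via Cauchy--Schwarz, their $(-2\ell_t v_t+2\nabla\ell\cdot\nabla v+\Psi v)^2$ residual being absorbed by the identically named positive term already present in \eqref{35}. The main obstacle is to make the $f$-bearing square survive with a clean unit coefficient while all cross terms with $-\theta b_1\chi_t u$ are absorbed; this rests entirely on the strict positivity $\phi_t\ge\delta$, which is exactly the sign obstruction that had to be handled more crudely in Theorem \ref{th3-2} when $\phi_t$ could not be signed.
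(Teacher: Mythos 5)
Your proposal follows essentially the same route as the paper's proof: substitute \eqref{w-e-1-1} into \eqref{35} to get the analogue of \eqref{Car-1*}, invoke \eqref{4.5-1}--\eqref{4.6-1} from Assumption \ref{con3} for the energy positivity, and keep the whole square $\bigl[(b_2-b_1\ell_t)v+b_1 v_t+\theta\chi f\bigr]^2$ from $\ell_t(dv_t)^2$ via $(A+B)^2\ge\tfrac12 A^2-B^2$ together with the lower bound $\phi_t\ge\delta$ of \eqref{5.6}, exactly as in \eqref{dv_t-3}. As a minor point in your favor, you correctly include $\theta\chi f$ inside the squared diffusion coefficient of the quadratic variation, whereas the left-hand side of the paper's display \eqref{dv_t-3} omits it (evidently a typo inherited from \eqref{dv_t-1}), so your computation is the cleaner statement of the same argument.
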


\begin{proof}[Proof of Theorem \ref{th3-3}]
From \eqref{35} we obtain
\begin{align}\label{Car-1*}
&\mathbb{E}\int_{Q_1}\!\Big[2\lambda\gamma\psi\bigl(v_t,\nabla v^\top\bigr)\mathcal{M}_3\bigl(v_t,\nabla v^\top\bigr)^\top
-\frac{10\lambda}{\mu}v_t^2+\frac{2\lambda}{\mu}|\nabla v|^2
+O(1)\bigl(v_t^2+|\nabla v|^2\bigr)\Big]dt\,dx \notag \\
&+\mathbb{E}\int_{Q_1}\lambda^3\bigl(\mathcal{D}_2+\mathcal{D}_3+O(\lambda^{-2})\bigr)v^2\,dt\,dx +\mathbb{E}\int_{Q_1}\ell_t(dv_t)^2\,dx \\
&\le \frac12\,\mathbb{E}\int_{Q_1}\!\theta^2\Big[-a_1\chi_t u-(\boldsymbol a_2\cdot\nabla\chi)u
+\chi_{tt}u+2\chi_t u_t-2\nabla\chi\cdot\nabla u+\Delta\chi u\Big]^2\,dt\,dx.\notag
\end{align}
Now we estimate the term $\ell_t(dv_t)^2$. Using \eqref{dv_t-1} and the lower bound $\phi_t\ge\delta$ from \eqref{5.6},
\begin{align}\label{dv_t-3}
&\lambda\phi_t\big[\bigl(b_2-b_1\ell_t\bigr)v+b_1 v_t+\theta\bigl(-b_1\chi_t u\bigr)\big]^2 \notag \\
&\ge \frac12\lambda\phi_t\big[\bigl(b_2-b_1\ell_t\bigr)v+b_1 v_t+\theta\chi f\big]^2
-\lambda\phi_t\,\theta^2\bigl(b_1\chi_t\bigr)^2u^2 \notag \\
&\ge \frac{\delta}{2}\big[\bigl(b_2-b_1\ell_t\bigr)v+b_1 v_t+\theta\chi f\big]^2
-\lambda\phi_t\,\theta^2\bigl(b_1\chi_t\bigr)^2u^2 .
\end{align}
Combining \eqref{4.5-1}, \eqref{4.6-1} and \eqref{dv_t-3} with \eqref{Car-1*} yields the desired estimate \eqref{35-1-1-1}.
\end{proof}

\section{Proof of Theorem \ref{th1}}\label{sec-proof-th4}

For simplicity and without loss of generality, we consider only the case $\alpha\in(0,1)$ and take $(s_0,y_0)=(0,\mathbf{0})$.

Fix a point $(t_0,x_0)\in\mathbb{R}^+\times\mathbb{R}^n$ and define the forward cone
$$
Q_0(t_0,x_0)\= \Big\{(t,x)\in[t_0,+\infty)\times\mathbb{R}^n \;\Big|\; 
\alpha(t-t_0)^2-|x-x_0|^2\ge0\Big\}.
$$
Set \vspace{-2mm}
\begin{equation}\label{b14}
c_3\=\max\!\left\{\frac{8(\alpha-2)^2}{c_1^4\alpha},\;\frac{32^2}{2c_1^4\alpha^3}\right\}+1,\vspace{-2mm}
\end{equation}
and choose a point $x_1\in\mathbb{R}^n$ satisfying $
|x_1-x_0|^2=4c_3$. 
Define a second conical set
$$
Q_1(t_0,x_1)\=\Big\{(t,x)\in[t_0,+\infty)\times\mathbb{R}^n \;\Big|\;
\frac{\alpha}{2}(t-t_0)^2-|x-x_1|^2>c_3\Big\}.
$$
\begin{remark}
The constant $c_3$ is chosen precisely so that the Carleman estimate \eqref{b10} remains valid.
The point $x_1$ is taken sufficiently far from $x_0$ that the vertex of the hyperboloid $\partial Q_1(t_0,x_1)$ lies outside $Q_0(t_0,x_0)$.
This geometric separation guarantees that condition \eqref{b2} below holds.
\end{remark}

Now let $x_2 = x_1 + k(x_0-x_1)$ with $k>0$ and let $(t_2,x_2)$ be the unique solution of
$$
\begin{cases}
\displaystyle \alpha(t_2-t_0)^2-|x_2-x_0|^2=0,\\[4pt]
\displaystyle \frac{\alpha}{2}(t_2-t_0)^2-|x_2-x_1|^2=c_3.
\end{cases}
$$
A straightforward computation yields
$$
t_2 = t_0+\sqrt{\frac{4c_3}{\alpha}\Bigl(\sqrt{\frac{3}{2}}-2\Bigr)^2},\qquad
x_2 = x_1+\Bigl(\sqrt{\frac{3}{2}}-1\Bigr)(x_0-x_1).
$$
One readily checks that $t_2$ is the minimal time on the intersection $\partial Q_0(t_0,x_0)\cap\overline{Q_1(t_0,x_1)}$.  
Moreover, for any point $(t_2,x)$ of the form $x = x_1 + \tilde k(x_0-x_1)$ with $|\tilde k|<\sqrt{\frac{3}{2}}-1$, we have
\begin{equation}\label{b2}
(t_2,x)\in Q_1(t_0,x_1)\setminus Q_0(t_0,x_0).
\end{equation}

Observe that the quantities $t_2-t_0$ and $|x_1-x_2|$ depend only on $\alpha$ and $c_3$, and are independent of the base point $(t_0,x_0,x_1)$. To simplify the notation in the iterative argument that follows, we introduce two absolute constants:\vspace{-2mm}
$$
T_0\=  t_2-t_0,\qquad X_0\= |x_1-x_2|.\vspace{-2mm}
$$

\begin{lemma}\label{lemma6.1}
Let $u$ be a solution of equation \eqref{main_equation}. If $u,\;u_t,\;\nabla u$ vanish on the set $\partial Q_0(t_0,x_0)\cap Q_1(t_0,x_1)$, 
then $u\equiv 0$ in $Q_1(t_0,x_1)\setminus Q_0(t_0,x_0)$.
\end{lemma}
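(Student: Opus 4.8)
The plan is to derive Lemma \ref{lemma6.1} from a single stochastic Carleman estimate adapted to the hyperboloidal geometry of $Q_1(t_0,x_1)$, rather than from a pointwise foliation of the cone. The point is that the hypothesis supplies full Cauchy data, $u=u_t=\n u=0$, on the lateral surface $\pa Q_0(t_0,x_0)\cap Q_1(t_0,x_1)$, which is precisely the portion of the boundary of $Q_1(t_0,x_1)\setminus Q_0(t_0,x_0)$ across which we wish to continue the vanishing; the remaining boundary sits on $\pa Q_1(t_0,x_1)$, where the Carleman weight can be made negligible. Thus the lemma is a Cauchy-uniqueness statement that should follow by the same exponential-weight mechanism used for Theorems \ref{main_theorem_1}--\ref{main_theorem_3}, now with a globally defined quadratic weight.

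First I would fix the weight. Take $\rho(t,x)=\tfrac{\a}{2}(t-t_0)^2-|x-x_1|^2$, so that $Q_1(t_0,x_1)=\{\rho>c_3\}$, and form $\psi,\phi,\ell,\th$ as in \eqref{weight}. The level sets $\{\rho=\mathrm{const}\}$ foliate $Q_1(t_0,x_1)$, and $\rho_t=\a(t-t_0)>0$ on the relevant range of $t$, so the non-degeneracy $\phi_t\ge c>0$ needed to exploit the It\^o term is in force. One computes $\rho_t^2-|\n\rho|^2=\a^2(t-t_0)^2-4|x-x_1|^2$, which on $Q_1(t_0,x_1)$ is only bounded below by $\a(\a-2)(t-t_0)^2<0$; hence the weight level sets are indefinite in character and the classical pseudo-convexity positivity is not available. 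The purpose of the specific choice \eqref{b14} of $c_3$ is exactly to control this indefiniteness: it forces $\rho_t^2-|\n\rho|^2$ and the quadratic form $(\rho_t,\n\rho^\top)\cM(\varrho)(\rho_t,\n\rho^\top)^\top$ into a sign pattern for which $\cD_2+\cD_3$ can be compensated once the stochastic gain is added.

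Next I would run the estimate of Theorem \ref{th3}, exploiting the It\^o term $\ell_t(dv_t)^2$ exactly as in \eqref{dv_t}: since $|b_1|\ge c_1>0$, this quadratic-variation term contributes $\tfrac12\l\phi_t c_1^2 v_t^2+\tfrac12\l^3\phi_t^3 c_1^2 v^2$ up to lower order, the positivity with no deterministic analogue. This is where $c_1$ enters through $c_1^{-4}$ in \eqref{b14}: $c_3$ must be large enough (relative to $1/c_1^4$ and to $\a$) that $\tfrac12\l^3\phi_t^3 c_1^2 v^2$ dominates the negative part of $\l^3(\cD_2+\cD_3)v^2$ and the indefinite first-order terms throughout the conical shell. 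I would then choose a cutoff $\chi$ equal to $1$ in the interior of $Q_1(t_0,x_1)\setminus Q_0(t_0,x_0)$ and vanishing across both $\pa Q_0(t_0,x_0)\cap Q_1(t_0,x_1)$ and $\pa Q_1(t_0,x_1)$, set $w\=\chi u$, and apply \eqref{35} to $w$. The commutator terms produced by $\chi$ are supported on the transition layers: those at $\pa Q_0(t_0,x_0)\cap Q_1(t_0,x_1)$ vanish identically because $u,u_t,\n u=0$ there, while those at $\pa Q_1(t_0,x_1)$ are absorbed by the exponential smallness of $\th^2$ on lower level sets, the geometric separation \eqref{b2} being used to guarantee that this outer layer lies at strictly smaller values of $\phi$ than the region to be cleared.

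Letting $\l\to+\infty$ then forces $u\equiv0$ on $\{\phi>c_2\}\cap(Q_1(t_0,x_1)\setminus Q_0(t_0,x_0))$ for an intermediate threshold $c_2$, and enlarging the cutoff (sending the auxiliary thresholds to their limits) yields $u\equiv0$ on all of $Q_1(t_0,x_1)\setminus Q_0(t_0,x_0)$. The main obstacle is the positivity verification of the second step: confirming that the constants in \eqref{b14} genuinely close the inequality $\tfrac12\phi_t^3 c_1^2+\cD_2+\cD_3\ge0$ and keep the $\cM$-form controllable over the entire shell, so that the stochastic contribution outweighs the indefinite terms coming from the non-pseudo-convex character of the weight. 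Balancing this against the geometric constraint \eqref{b2}, which pins down $x_1$, $T_0$ and $X_0$, is the delicate heart of the argument.
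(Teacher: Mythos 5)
Your overall mechanism is essentially the paper's: the same weight built from $\rho(t,x)=\frac{\a}{2}(t-t_0)^2-|x-x_1|^2$ (with $\mu=0$, $\g=1$, so $\psi=e^{\rho}$, $\ell=\l\psi$), the same use of the It\^o term $\ell_t(dv_t)^2$ with $|b_1|\ge c_1$ to generate the positive contributions $\tfrac12\l\phi_t c_1^2v_t^2+\tfrac12\l^3\phi_t^3c_1^2v^2$, and the same role for the choice \eqref{b14} of $c_3$ in offsetting the failure of pseudo-convexity of the level sets of $\rho$. This matches Theorems \ref{th4}--\ref{th5}, where the paper takes $\varrho=2$, computes $\cM(2)$, and bounds $\cD_2\ge 32c_3\psi^3-16\a\psi^3(t-t_0)^2$, closing the positivity using $(t-t_0)\ge\sqrt{2c_3/\a}$ on the shell. (Two small inaccuracies: with $\mu=0$ there is no $\cD_3$ term in Theorem \ref{th4}; and \eqref{b2} plays no role in the Carleman absorption --- it is the geometric separation used only in the iterative covering argument for Theorem \ref{th1}.)

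However, your treatment of the lateral boundary $\pa Q_0(t_0,x_0)\cap Q_1(t_0,x_1)$ has a genuine gap. You place a cutoff transition layer along this surface and assert that the resulting commutator terms ``vanish identically because $u,u_t,\n u=0$ there.'' The hypothesis of Lemma \ref{lemma6.1} gives vanishing only on the measure-zero surface itself, whereas the commutators $\chi_t u$, $\chi_{tt}u$, $\n\chi\cd\n u$, $b_1\chi_t u$ are supported throughout a layer of positive thickness inside $Q_1\setminus Q_0$, where $u$ is not known to vanish. Nor can these terms be absorbed in the $\l\to\infty$ limit: on the shell $Q_1(t_0,x_1)\setminus Q_0(t_0,x_0)$ the weight $\rho$ (hence $\th$) attains its \emph{largest} values precisely along $\pa Q_0(t_0,x_0)$, since the sets $\{\rho>c\}\setminus Q_0(t_0,x_0)$ exhaust the shell toward that boundary as $c$ increases past $c_3$; a transition layer there thus carries weight at least as large as the region you are trying to clear, and the exponential-comparison argument collapses. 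The paper avoids this entirely: the cutoff \eqref{b8} is a function of $\rho$ alone, transitioning only near $\pa Q_1(t_0,x_1)$ where the weight is minimal, while along $\pa Q_0\cap Q_1$ one keeps $\chi\equiv 1$ and instead uses the vanishing Cauchy data to annihilate the boundary integrals of the divergence terms $\n\cd\cV\,dt+d\cN$ in \eqref{b5} --- this works because $\cV$ and $\cN$ are quadratic in $(v,v_t,\n v)$ and therefore vanish pointwise on the surface. Your variant could only be repaired by assuming $u\equiv 0$ inside $Q_0(t_0,x_0)$ so that the layer could be placed in its interior; that holds in the application within Theorem \ref{th1}, but it is not part of the hypothesis of Lemma \ref{lemma6.1} as stated.
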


Before proving Theorem \ref{th1}, we sketch the geometric idea with the help of the following figures.

\begin{figure}[htbp]
\centering
\begin{subfigure}{0.35\textwidth} 
\includegraphics[width=\textwidth]{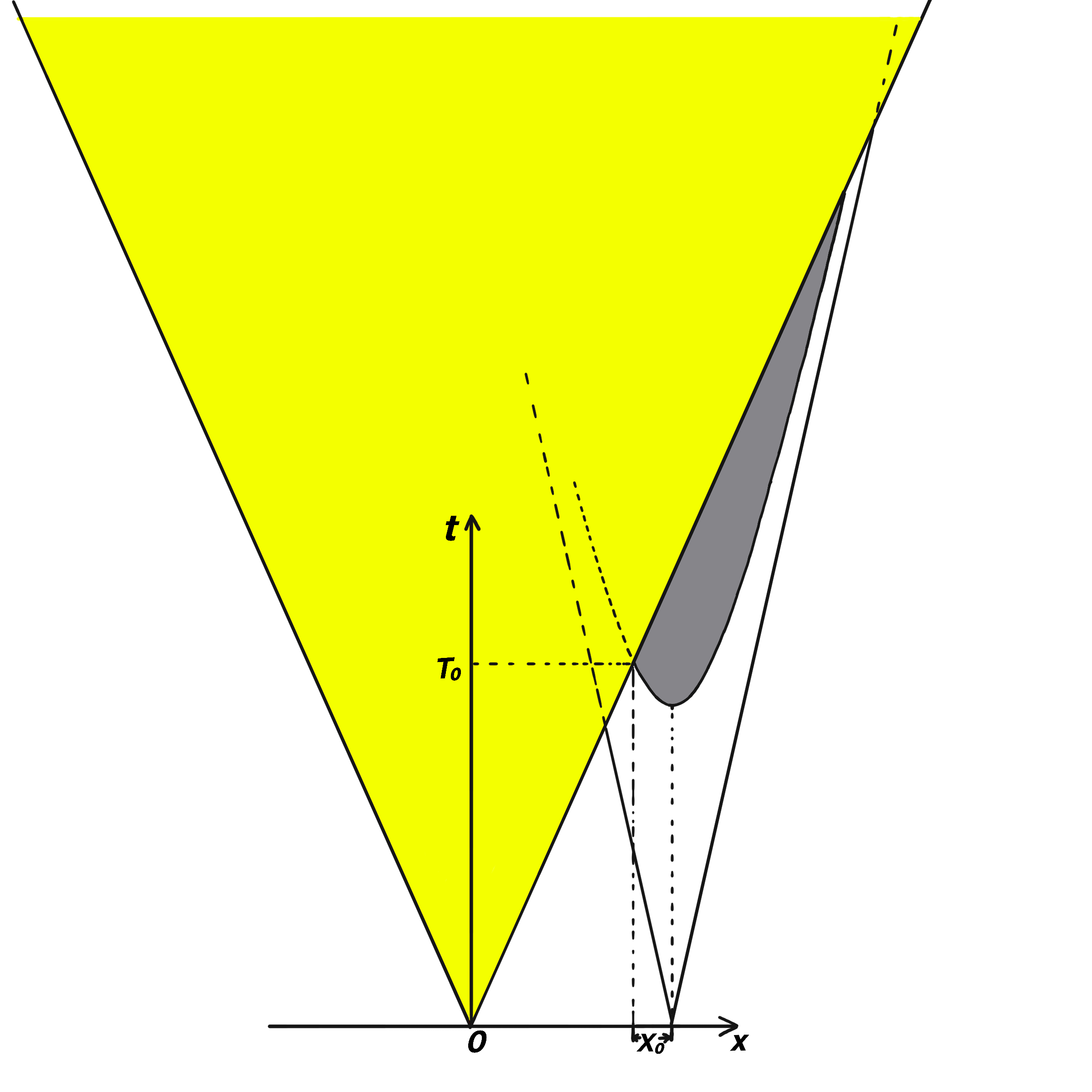} 
\caption{The yellow region indicates the initial domain where $u\equiv 0$. The gray region represents the extended zero zone derived directly from Lemma 6.1.}
\label{fig:1a}
\end{subfigure}
\hfill 
\begin{subfigure}{0.35\textwidth}
\includegraphics[width=\textwidth]{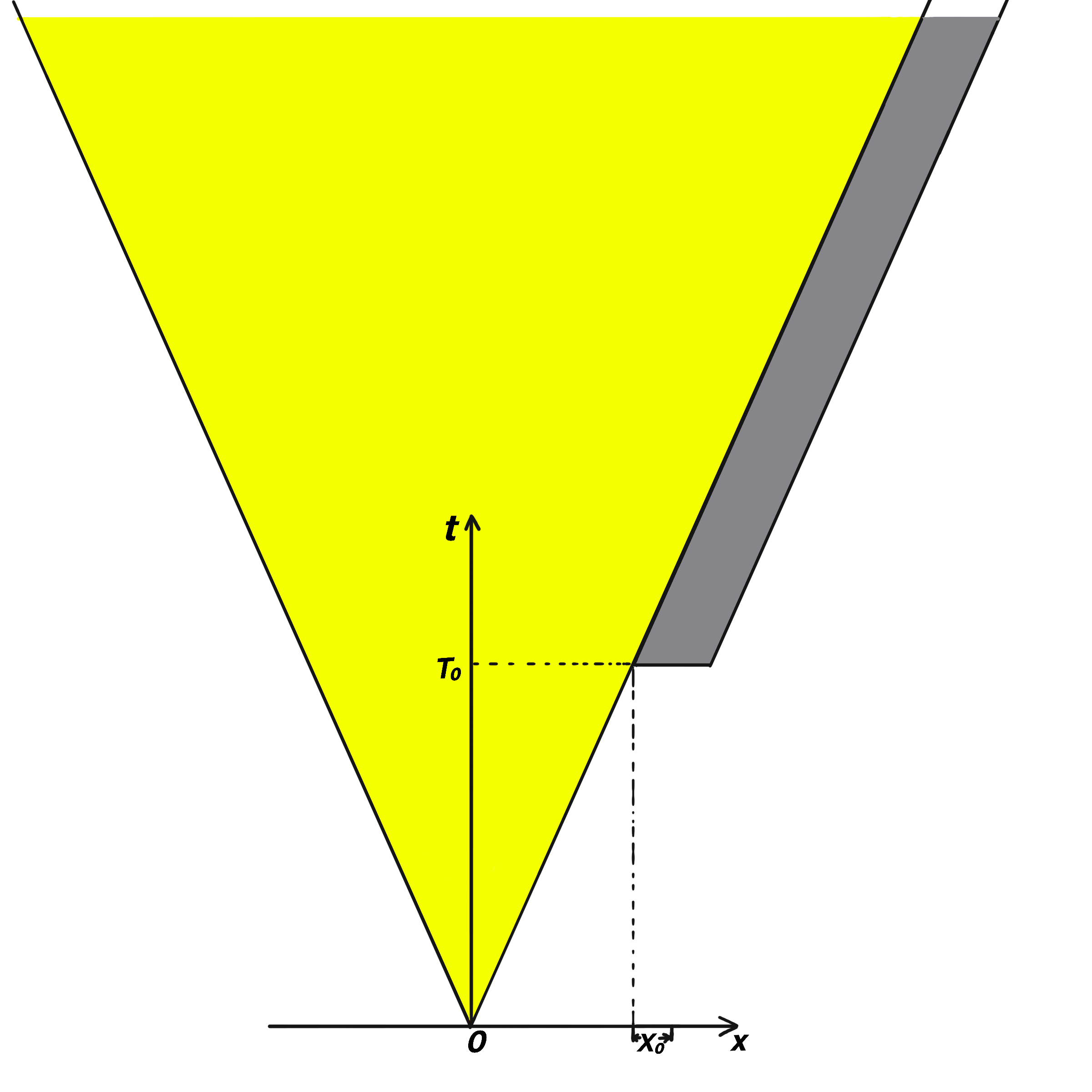} 
\caption{As  $(t_0,x_0)$ varies along the boundary of the initial zero region, Lemma 6.1 ensures $u\equiv 0$ in the expanded gray area.}
\label{fig:1b}
\end{subfigure}

\begin{subfigure}{0.35\textwidth}
\includegraphics[width=\textwidth]{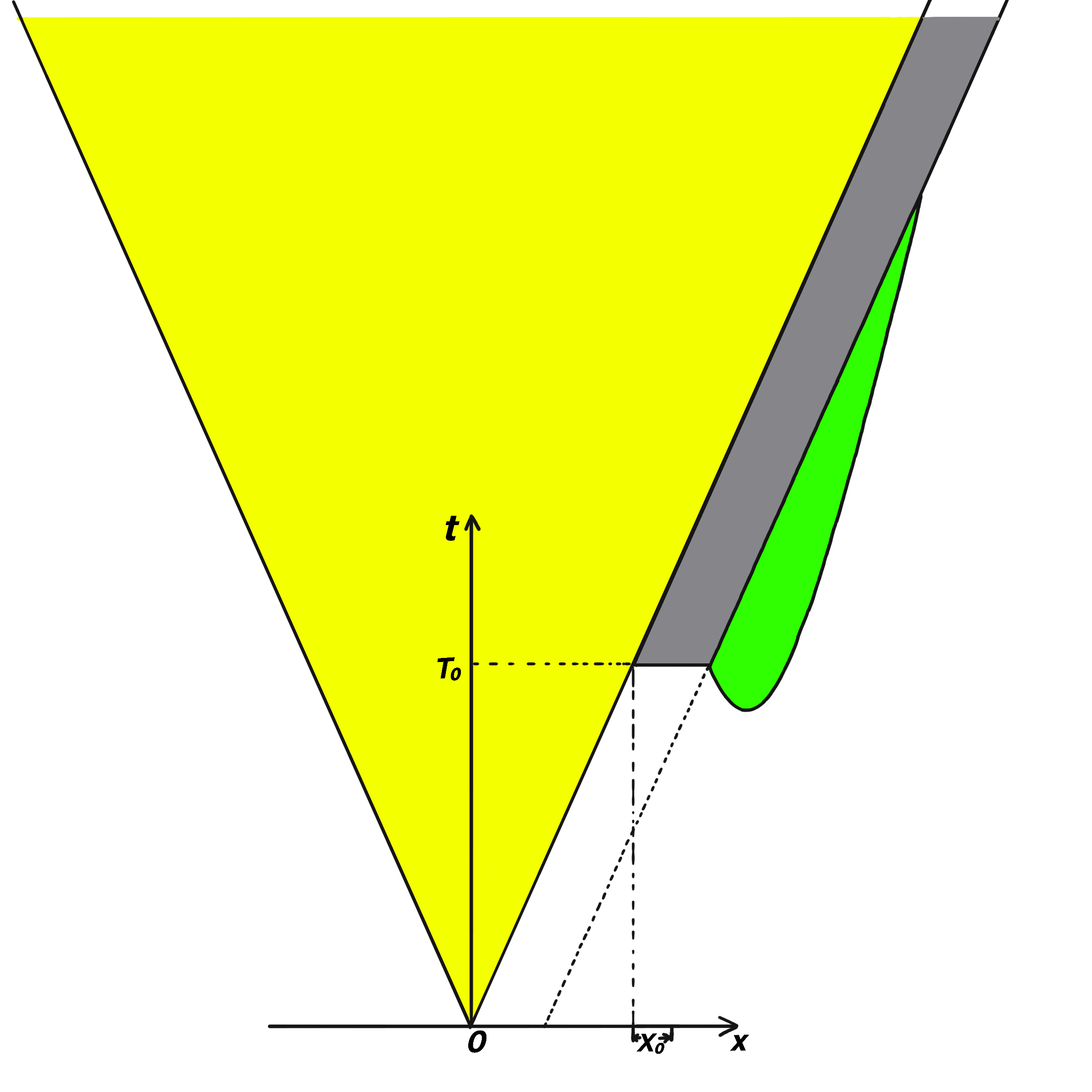} 
\caption{By repeatedly applying Lemma 6.1 on the boundary of the gray region, the green region is iteratively covered.}
\label{fig:1c}
\end{subfigure}
\hfill
\begin{subfigure}{0.35\textwidth}
\includegraphics[width=\textwidth]{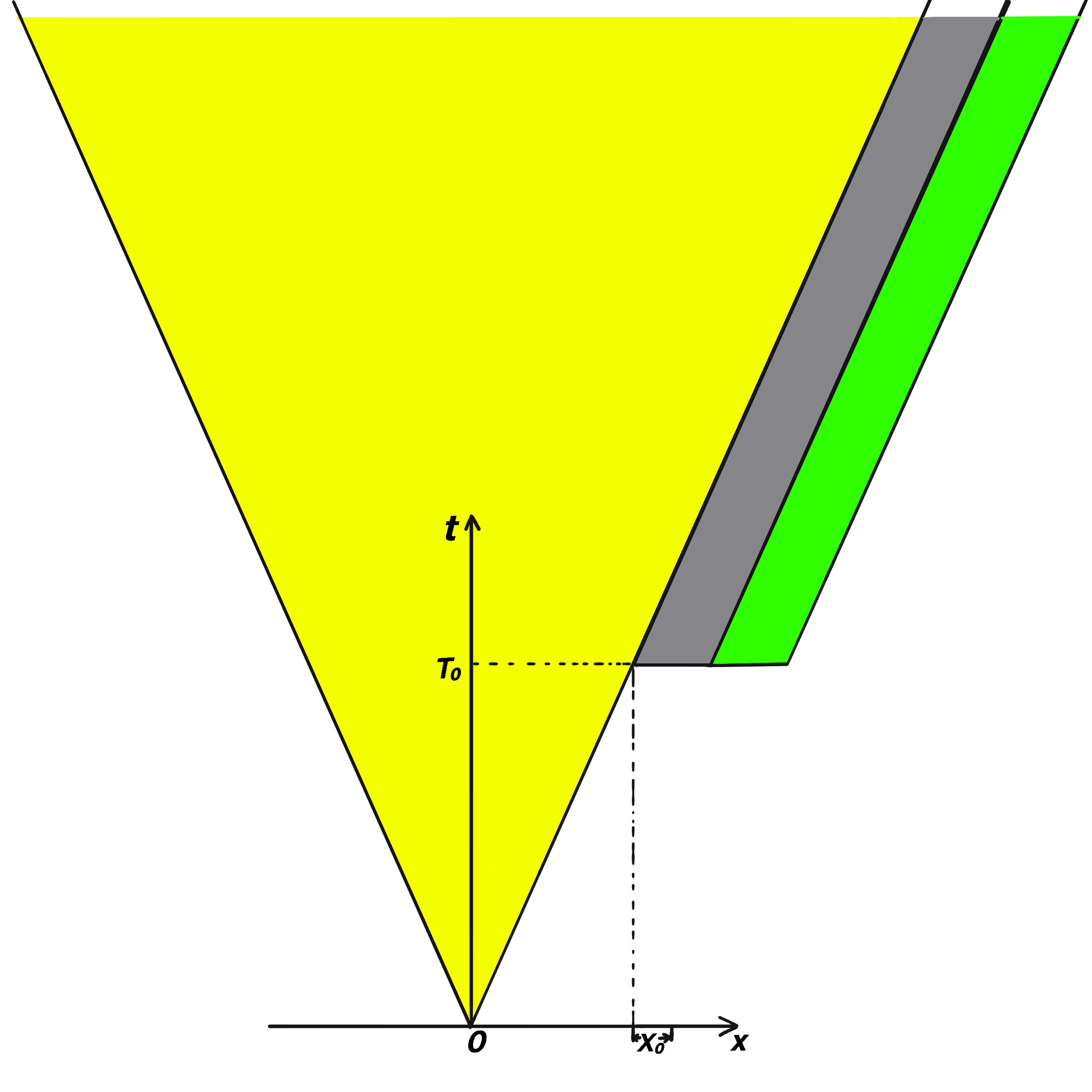} 
\caption{Similar to (b), Lemma 6.1 ensures $u\equiv 0$ in the expanded green area.}
\label{fig:1d}
\end{subfigure}

\caption{Using the local unique continuation property established in Lemma \ref{lemma6.1}, we divide the proof of Theorem \ref{th1} into the following steps. 
\textbf{Step 1.} Fix $(t_0,x_0)=(0,0)$. Since $u\equiv0$ in $Q_0(0,0)$ (the yellow region in Figure (a)), Lemma \ref{lemma6.1} immediately gives $u\equiv0$ in the gray region of Figure (a). 
\textbf{Step 2.} Let $(t_0,x_0)$ vary along the boundary $\partial Q_0(0,0)$. Repeating the argument of Step 1 at each such point shows that $u\equiv0$ in the gray region of Figure (b). 
\textbf{Step 3.} We now apply Lemma \ref{lemma6.1} iteratively, sliding the base point $(t_0,x_0)$ along the boundary of the zero region obtained in the previous step. This extends the vanishing property successively to the green domains shown in Figures (c) and (d). 
\textbf{Step 4.} Finally, the domain $\{(t,x)\mid t\ge T_0\}$ can be covered by the union of the zero regions constructed in the preceding iterations. Together with a backward uniqueness argument, this implies that $u\equiv0$ for all $t\ge0$, which completes the proof of Theorem \ref{th1}.}
\label{fig:all}
\end{figure}

\newpage

\begin{proof}[Proof of Theorem \ref{th1} from Lemma \ref{lemma6.1}]
We first consider the case where $u\equiv0$ in the cone $Q_0(0,0)$. By Lemma \ref{lemma6.1},
\begin{equation}\label{b1}
u\equiv0\quad\text{in}\quad\bigcup_{|y_1|^2=4c_3}\Bigl(Q_1(0,y_1)\setminus Q_0(0,0)\Bigr).
\end{equation}
In view of \eqref{b2}, relation \eqref{b1} implies
\begin{equation}\label{b3}
u(T_0,\cdot)\equiv0 \qquad\text{on } \bigl\{x\in\mathbb{R}^n\mid |x|\le\sqrt{\alpha}\,T_0+X_0\bigr\}.
\end{equation}

Now let the base point $(s_2,y_2)$ vary along the boundary $\partial Q_0(0,0)$. Since $Q_0(s_2,y_2)\subset Q_0(0,0)$, the same reasoning as for \eqref{b3} gives, for every $t\ge T_0$,
\begin{equation}\label{b4}
u(t,\cdot)\equiv0 \qquad\text{on } \bigl\{x\in\mathbb{R}^n\mid |x|\le\sqrt{\alpha}\,t+X_0\bigr\}.
\end{equation}

Next, fix a point $y_3\in\mathbb{R}^n$ with $|y_3|=X_0$ and apply Lemma \ref{lemma6.1} to the cone $Q_0(0,y_3)$. Set $y_4\=  y_3+(x_1-x_0)$. Observe that
$$
\partial Q_0(0,y_3)\cap Q_1(0,y_4)\subset\bigl\{(t,x)\mid |x|\le\sqrt{\alpha}\,t+X_0,\;t\ge T_0\bigr\}.
$$
By \eqref{b4} we therefore have
$$
u\equiv0\quad\text{in}\quad\bigcup_{|y_3-y_4|^2=4c_3}\Bigl(Q_1(0,y_4)\setminus Q_0(0,y_3)\Bigr),
$$
which in turn yields
$$
u(T_0,\cdot)\equiv0 \qquad\text{on } \bigl\{x\in\mathbb{R}^n\mid |x|\le\sqrt{\alpha}\,T_0+2X_0\bigr\}.
$$
Arguing as in the derivation of \eqref{b4}, we obtain for every $t\ge T_0$
$$
u(t,\cdot)\equiv0 \qquad\text{on } \bigl\{x\in\mathbb{R}^n\mid |x|\le\sqrt{\alpha}\,t+2X_0\bigr\}.
$$
Repeating this process inductively, we finally conclude that
$$
u\equiv0\quad\text{in } \{(t,x)\mid t\ge T_0\}.
$$
Combined with the backward uniqueness property of equation \eqref{main_equation}, this completes the proof of Theorem \ref{th1}.
\end{proof}

In the remainder of this section we prove Lemma \ref{lemma6.1}. Define the following Carleman weights:
\begin{gather}
\theta = e^{\ell},\qquad 
\ell = \lambda\psi,\qquad 
\psi = e^{\rho(t,x)},\label{weight-1}\\[4pt]
\rho(t,x)=\frac{\alpha}{2}(t-t_0)^2-|x-x_1|^2,\label{weight-2}
\end{gather}
where $\lambda$ is a (large) parameter that will be chosen later.

Put $\mu=0$ and $\gamma=1$ in Theorem \ref{th3} and denote $Q_0\= Q_1(t_0,x_1)\setminus Q_0(t_0,x_0)$. Then we obtain the following estimate.
\begin{theorem}\label{th4}
Let $\varrho\in C^2(\mathbb{R}^{1+n})$ be arbitrary, and let $\theta=e^{\ell},\;v=\theta w$ be as in \eqref{weight-1} and Lemma \ref{lem}. Then 
\begin{align}\label{b5}
&\mathbb{E}\int_{Q_0}\!\Bigl[\theta\bigl(-2\ell_t v_t+2\nabla\ell\cdot\nabla v+\Psi v\bigr)\bigl(dw_t-\Delta w\,dt\bigr)
+\nabla\!\cdot\!\mathcal{V}\,dt+d\mathcal{N}\Bigr]dx\notag\\[2pt]
&\ge\mathbb{E}\int_{Q_0}\!\Bigl\{2\lambda\psi\bigl(v_t,\nabla v^\top\bigr)\mathcal{M}(\varrho)\bigl(v_t,\nabla v^\top\bigr)^\top
+\bigl[\lambda^3\mathcal{D}_2+O(\lambda^2)\bigr]v^2\Bigr\}\,dt\,dx\notag\\
&\quad+\mathbb{E}\int_{Q_0}\!\bigl(-2\ell_t v_t+2\nabla\ell\cdot\nabla v+\Psi v\bigr)^2dt\,dx
+\mathbb{E}\int_{Q_0}\ell_t(dv_t)^2\,dx,
\end{align}
where $\mathcal{M}(\varrho)$ is defined in \eqref{ii-1},
$\Psi=\Delta\ell-\ell_{tt}+2\lambda\psi\varrho$, and
$$
\mathcal{D}_2=4\psi^3\varrho\bigl(\rho_t^2-|\nabla\rho|^2\bigr)
+2\psi^3\bigl(\rho_t,\nabla\rho^\top\bigr)\mathcal{M}(\varrho)\bigl(\rho_t,\nabla\rho^\top\bigr)^\top
+2\psi^3\bigl(\rho_t^2-|\nabla\rho|^2\bigr)^2 .
$$
\end{theorem}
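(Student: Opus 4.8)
The plan is to obtain \eqref{b5} as a direct specialization of the general Carleman estimate \eqref{35} established in Theorem \ref{th3}. The weights \eqref{weight-1}--\eqref{weight-2} correspond exactly to the choice $\mu = 0$ and $\gamma = 1$ in the definition \eqref{weight}: with $\gamma=1$ we have $\psi = e^{\rho}$, and with $\mu = 0$ we have $\phi = \psi$ so that $\ell = \lambda\psi$. Hence the strategy is simply to substitute these parameter values into \eqref{35} and track which terms survive.

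First I would observe that with $\mu = 0$ the auxiliary function $\cD_1$ defined in \eqref{D_123} vanishes identically, since both of its terms carry an explicit factor of $\mu$. Consequently, every term appearing in the definition \eqref{D_3-1} of $\cD_3$ contains either a factor of $\mu$ or a factor of $\cD_1$ (or one of its derivatives), so that $\cD_3 \equiv 0$ as well; this removes the entire $\lambda^3\cD_3$ contribution from the right-hand side. Likewise, the two $\mu$-weighted terms $-10\lambda\mu v_t^2 + 2\lambda\mu|\n v|^2$ in \eqref{35} disappear, and the weight simplifies to $\Psi = \Delta\ell - \ell_{tt} + 2\lambda\psi\varrho$, matching the statement. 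Setting $\gamma = 1$ in the formula \eqref{D_2-1} for $\cD_2$ reduces it precisely to the expression displayed in Theorem \ref{th4}.

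The only remaining point is the apparent absence of the gradient-type term $2\lambda\gamma^2\psi\bigl(\rho_t v_t - \n\rho\cd\n v\bigr)^2$ from \eqref{b5}. With $\gamma = 1$ this term equals $2\lambda\psi\bigl(\rho_t v_t - \n\rho\cd\n v\bigr)^2$, which is a perfect square multiplied by $2\lambda\psi$; since $\psi = e^{\rho} > 0$ everywhere, it is nonnegative. Discarding it from the right-hand side therefore only weakens the inequality while keeping it valid, which is exactly what \eqref{b5} records. Thus \eqref{b5} follows at once from \eqref{35}.

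Because this is a pure specialization, there is no genuine analytical obstacle; the only items requiring care are the algebraic bookkeeping confirming $\cD_1 = \cD_3 = 0$ when $\mu = 0$ (so that no hidden $O(\lambda^3)$ term is lost in passing to the simplified right-hand side) and the sign check $2\lambda\psi \ge 0$ that justifies dropping the square term.
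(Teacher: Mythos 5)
Your proposal is correct and coincides with the paper's own argument: the paper derives Theorem \ref{th4} precisely by setting $\mu=0$ and $\gamma=1$ in Theorem \ref{th3}, so that $\cD_1\equiv 0$, hence $\cD_3\equiv 0$, the $\mu$-weighted terms drop, and the nonnegative square $2\lambda\psi\bigl(\rho_t v_t-\nabla\rho\cdot\nabla v\bigr)^2$ is discarded. Your explicit bookkeeping of these vanishing terms is exactly the verification the paper leaves implicit.
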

To start the proof of Lemma \ref{lemma6.1} we also introduce a cut-off function. For a small constant $\varepsilon>0$ let $\chi:\mathbb{R}^{1+n}\to\mathbb{R}$ be a smooth function satisfying
\begin{equation}\label{b8}
\chi(t,x)=1\quad\text{on }\{(t,x)\mid\rho(t,x)>c_3+\varepsilon\},\qquad
\chi(t,x)=0\quad\text{on }\{(t,x)\mid\rho(t,x)<c_3\}.
\end{equation}
As in \eqref{w-e}, we have
\begin{align}\label{b9}
\theta\bigl(dw_t-\Delta w\,dt\bigr)
&=dv_t-\Delta v\,dt
+\Bigl[(\ell_t^2-|\nabla\ell|^2)v-(\ell_{tt}-\Delta\ell)v-2\ell_t v_t+2\nabla\ell\cdot\nabla v\Bigr]dt\notag\\
&=\bigl[(a_3-a_1\ell_t-\boldsymbol a_2\cdot\nabla\ell)v+a_1 v_t+\boldsymbol a_2\cdot\nabla v\bigr]dt\notag\\
&\quad+\theta\Bigl[-a_1\chi_t u-(\boldsymbol a_2\cdot\nabla\chi)u
+\chi_{tt}u+2\chi_t u_t-2\nabla\chi\cdot\nabla u+\Delta\chi u\Bigr]dt\\
&\quad+\bigl[(b_2-b_1\ell_t)v+b_1 v_t\bigr]dW(t) +\theta\bigl(-b_1\chi_t u\bigr)dW(t).\notag
\end{align}
Analogously to the preceding sections, Lemma \ref{lemma6.1} follows from the following Carleman estimate.

\begin{theorem}\label{th5}
Assume that $u,\;u_t,\;\nabla u$ vanish on $\partial Q_0(t_0,x_0)\cap Q_1(t_0,x_1)$.  
Then for every $\varepsilon>0$ there exist constants $\lambda_0>0$ and $C>0$ such that for any solution $w$ of \eqref{w-e} and all $\lambda\ge\lambda_0$,
\begin{align}\label{b10}
&\mathbb{E}\int_{Q_0}\theta^2\bigl(\lambda^3 w^2+\lambda w_t^2\bigr)\,dt\,dx\notag\\
&\le C\,\mathbb{E}\int_{Q_0}\theta^2\Bigl[-a_1\chi_t u-(\boldsymbol a_2\cdot\nabla\chi)u
+\chi_{tt}u+2\chi_t u_t-2\nabla\chi\cdot\nabla u+\Delta\chi u\Bigr]^2\,dt\,dx\notag\\
&\quad + C\,\mathbb{E}\int_{Q_0}\theta^2\bigl(b_1\chi_t u\bigr)^2\,dt\,dx .
\end{align}
\end{theorem}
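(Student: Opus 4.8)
The proof runs parallel to those of Theorems~\ref{th3-1}--\ref{th3-3}, the genuinely new features being the geometry of the lens-shaped region $Q_0=Q_1(t_0,x_1)\setminus Q_0(t_0,x_0)$ and the fact that the multiplier matrix $\mathcal{M}(\varrho)$ can no longer be rendered non-negative. The plan is to start from estimate \eqref{b5} of Theorem~\ref{th4}, substitute the expansion \eqref{b9} of $\theta(dw_t-\Delta w\,dt)$, take expectation, and dispose of the divergence and martingale terms on the left. Since $Q_0$ is a bounded region lying between the inner cone $\partial Q_0(t_0,x_0)$ and the outer hyperboloid $\{\rho=c_3\}=\partial Q_1(t_0,x_1)$, applying the Gauss formula to $\nabla\!\cdot\!\mathcal V$ and integrating $d\mathcal N$ in time produces only boundary contributions on $\partial Q_0$. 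On the hyperboloid part the cut-off $\chi$ vanishes (there $\rho=c_3$), so $w,w_t,\nabla w$, and hence $v,v_t,\nabla v$, all vanish; on the cone part the hypothesis that $u,u_t,\nabla u$ vanish on $\partial Q_0(t_0,x_0)\cap Q_1(t_0,x_1)$ gives the same. As $\mathcal V,\mathcal N$ are quadratic in $(v,v_t,\nabla v)$ by \eqref{VMBA}, they vanish on all of $\partial Q_0$ and every boundary term drops out, while the $dW$-part of $\theta M(dw_t-\Delta w\,dt)$, with $M\=-2\ell_t v_t+2\nabla\ell\cdot\nabla v+\Psi v$, has zero expectation. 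What survives on the left is $\mathbb E\int_{Q_0}M\cdot[\text{$dt$-part of }\theta(dw_t-\Delta w\,dt)]\,dt\,dx$, which I would split by Young's inequality into an interior piece absorbed into $\tfrac12 M^2$ plus lower-order multiples of $v_t^2,|\nabla v|^2,\lambda^2 v^2$, and a source piece bounded by the two integrals on the right of \eqref{b10}.

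The heart of the matter is to show that, after these reductions, the quadratic form in $(v_t,\nabla v,v)$ on the right of \eqref{b5} is positive and dominates the leftover lower-order terms. Here the decisive departure from the deterministic theory appears: with $\rho=\tfrac{\alpha}{2}(t-t_0)^2-|x-x_1|^2$ one computes $\mathcal M(\varrho)=\diag\!\big(\alpha-\varrho,\ (\varrho-2)I_{n}\big)$, and since $\alpha\in(0,1)$ no value of $\varrho$ makes $\mathcal M(\varrho)\ge0$. I would fix $\varrho$ slightly larger than $2$, so that the bottom block $(\varrho-2)I$ is positive and, for $\lambda$ large, $2\lambda\psi(\varrho-2)|\nabla v|^2$ absorbs the $O(1)|\nabla v|^2$ produced by Young; the price is the negative entry $\alpha-\varrho<0$, which makes $\mathcal M(\varrho)$ contribute the unfavourable $2\lambda\psi(\alpha-\varrho)v_t^2<0$. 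Repairing this $v_t^2$ deficit is exactly the job of the Itô term $\ell_t(dv_t)^2$.

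Exploiting \eqref{dv_t-1} (now with $\phi=\psi$) together with the non-degeneracy $|b_1|\ge c_1$, the quadratic variation yields $\ell_t(dv_t)^2=\lambda\psi_t\big[(b_2-b_1\ell_t)v+b_1v_t-\theta b_1\chi_t u\big]^2dt$, from which—after writing the $v\,v_t$ cross term as a total $t$-derivative that integrates to zero and moving the $u$-source to the right—I would extract a positive $\tfrac12\lambda\psi_t c_1^2 v_t^2$ and a positive $v^2$ term of order $\lambda^3$ (coefficient $\sim\psi^3\rho_t^3 c_1^2$). The geometry enters precisely through the lower bound $\psi_t=\rho_t\psi=\alpha(t-t_0)\psi\ge\sqrt{2\alpha c_3}\,\psi$, valid because $\tfrac{\alpha}{2}(t-t_0)^2>c_3$ on $Q_1(t_0,x_1)$; thus the $v_t^2$ gain is of size $\tfrac12\lambda\sqrt{2\alpha c_3}\,c_1^2\psi$, and it overcomes the deficit $2\lambda(\varrho-\alpha)\psi$ exactly when $c_3\ge\tfrac{8(\alpha-2)^2}{c_1^4\alpha}$, the first entry of \eqref{b14} (its $+1$ slack covering the choice $\varrho>2$). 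An analogous comparison for the $v^2$ coefficient, combining $\lambda^3\mathcal D_2$ with the $\lambda^3$ gain from $\ell_t(dv_t)^2$, forces the second entry $\tfrac{32^2}{2c_1^4\alpha^3}$. I expect this balancing to be the main obstacle: unlike the deterministic case, positivity cannot be obtained from $\mathcal M$ and $\mathcal D_2$ alone, and $c_3$ must be tuned so that the stochastic contribution—measured through $c_1$ and the lower bound on $\psi_t$—overwhelms the characteristic degeneracy.

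Once the net coefficients of $v_t^2$ and of $v^2$ are positive of orders $\lambda$ and $\lambda^3$ (the residual $O(\lambda^2)v^2$ and $O(1)(v_t^2+|\nabla v|^2)$ being absorbed for $\lambda\ge\lambda_0$), I would collect the lower bound $\mathbb E\int_{Q_0}(\lambda^3 v^2+\lambda v_t^2)\,dt\,dx$ and undo the substitution $v=\theta w$ (the $|\nabla v|^2$ terms having served only to cancel the Young cross-terms) to obtain \eqref{b10}. Finally, Lemma~\ref{lemma6.1} follows from \eqref{b10} in the same manner that Theorem~\ref{main_theorem_1} followed from Theorem~\ref{th3-1}: on $\{\rho>c_3+2\varepsilon\}$ the weight $\theta^2$ is exponentially larger than the $O(e^{\lambda(c_3+\varepsilon)})$ bound on $\supp\nabla\chi$, so letting $\lambda\to\infty$ and then $\varepsilon\to0$ forces $u\equiv0$ on $Q_1(t_0,x_1)\setminus Q_0(t_0,x_0)$.
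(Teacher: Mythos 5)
Your proposal is correct and follows essentially the same route as the paper: start from Theorem \ref{th4}, compute $\mathcal{M}(\varrho)=\diag\bigl(\alpha-\varrho,\,(\varrho-2)I_{n}\bigr)$ for the weight $\rho=\frac{\alpha}{2}(t-t_0)^2-|x-x_1|^2$, kill the boundary terms using the hypothesis on the cone and the cut-off on the hyperboloid, and repair the unavoidable negative $v_t^2$ entry (and close the $\lambda^3 v^2$ balance) with the It\^o term $\ell_t(dv_t)^2$ via $|b_1|\ge c_1$ and the geometric bound $t-t_0\ge\sqrt{2c_3/\alpha}$, with both entries of $c_3$ in \eqref{b14} arising exactly as you describe. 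The only deviation is your choice of $\varrho$ slightly above $2$ where the paper takes $\varrho=2$ exactly; this costs a marginally more negative top entry (covered by the $+1$ slack in $c_3$) but buys a positive $O(\lambda)|\nabla v|^2$ term that absorbs the $O(1)|\nabla v|^2$ leftover from Young's inequality, a point the paper's own display \eqref{b13} passes over silently, so your variant is if anything the more careful one.
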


\begin{proof}[Proof of Theorem \ref{th5}]
Take $\varrho=2$ in Theorem \ref{th4}. A direct computation gives
\begin{equation}\label{b11}
\mathcal{M}(2)=\begin{pmatrix}
\alpha-2 & 0\\
0 & 0
\end{pmatrix}.
\end{equation}
For $(t,x)\in Q_0$ we have
\begin{align}\label{b12}
\mathcal{D}_2
&\ge 32\psi^3\!\left[\frac{\alpha}{2}(t-t_0)^2-|x-x_1|^2\right]
+\psi^3\!\bigl[8\alpha^2-16\alpha+2(\alpha-2)\alpha^2\bigr](t-t_0)^2\notag\\
&\ge 32c_3\psi^3-16\alpha\psi^3(t-t_0)^2 .
\end{align}
As in \eqref{dv_t},
\begin{align}\label{b6}
\ell_t(dv_t)^2
= \lambda\phi_t\big[\bigl(b_2-b_1\ell_t\bigr)v+b_1 v_t+\theta\bigl(-b_1\chi_t u\bigr)\big]^2\,dt .
\end{align}
Hence
\begin{align}\label{b7}
&\lambda\phi_t\big[\bigl(b_2-b_1\ell_t\bigr)v+b_1 v_t+\theta\bigl(-b_1\chi_t u\bigr)\big]^2 \notag\\
&\ge \frac12\lambda\psi\alpha(t-t_0)c_1^2 v_t^2
+\frac12\lambda^3\psi^3\alpha^3(t-t_0)^3c_1^2 v^2
+\Bigl[\frac12\lambda\phi_t(b_2-b_1\lambda\phi_t)b_1 v^2\Bigr]_t\\
&\quad -O(\lambda^2)v^2-\theta^2\bigl(b_1\chi_t\bigr)^2 u^2.\notag
\end{align}
Combining \eqref{b11}, \eqref{b12} and \eqref{b7} with Theorem \ref{th4} yields
\begin{align}\label{b13}
&\mathbb{E}\int_{Q_0}\lambda^3\psi^3\!\Bigl[\tfrac12 \alpha^3(t-t_0)^3c_1^2+32c_3-16\alpha(t-t_0)^2\Bigr]v^2\,dt\,dx\notag\\
&+\mathbb{E}\int_{Q_0}\lambda\psi\!\Bigl[2(\alpha-2)+\tfrac12 \alpha(t-t_0)c_1^2\Bigr]v_t^2\,dt\,dx\\
&\le C\,\mathbb{E}\int_{Q_0}\theta^2\Bigl[-a_1\chi_t u-(\boldsymbol a_2\cdot\nabla\chi)u
+\chi_{tt}u+2\chi_t u_t-2\nabla\chi\cdot\nabla u+\Delta\chi u\Bigr]^2\,dt\,dx\notag\\
&\quad + C\,\mathbb{E}\int_{Q_0}\theta^2\bigl(b_1\chi_t u\bigr)^2\,dt\,dx .\notag
\end{align}
In the region $Q_0$ we have the lower bound $(t-t_0)\ge\sqrt{2c_3/\alpha}$. Recalling the definition of $c_3$ in \eqref{b14}, we obtain
$$
\frac12\lambda^3\alpha^3(t-t_0)^3c_1^2+32c_3-16\alpha(t-t_0)^2>0,
\qquad
2(\alpha-2)+\frac12 \alpha(t-t_0)c_1^2>0,
$$
for $\lambda$ large enough. Inserting these inequalities into \eqref{b13} gives the desired estimate \eqref{b10}.
\end{proof}

\begin{appendices}

\section{Nonuniqueness of soluitons to wave equations across the characteristic surface}\label{sec-nonuni}

In this section, we clarify that the Unique Continuation Property (UCP) generally fails across characteristic surfaces. While this fact is generally understood, we include a discussion here for completeness. Notably, the failure persists even for analytic coefficients, as demonstrated below.

\begin{lemma}\label{TheoremHor}\cite[Theorem 5.2.1]{Hormander}
Let $P(x,D) = \sum_{|\a| \le m} a_\a(x)D^\a$  be a differential operator of order $m$  defined in a neighborhood $G$ of a point $x_0\in \dbR^n$, and  with all coefficients $a_\a(\cd)$ are real  analytic  on $G$. Let $\phi $ be a real valued analytic function on $G$ such that $\n \varphi(x_0)\neq 0$ and $P_m(x, \n \varphi) \equiv  0 $ identically but $P_m^{(j)} (x, \n \varphi) \neq 0$ for some $j$ when $x=x_0$. Then there exists a neighborhood $G'$ of $x_0$ and a function $u\in C^m (G')$ which is analytic when $\varphi(x) \neq \varphi(x_0)$, such that $P(x,D) u  = 0$ and $\supp u =\{x\in G'|~ \varphi(x) \le \varphi(x_0)\}$.
\end{lemma}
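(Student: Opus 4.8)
The plan is to construct, after a normalization of coordinates, a nontrivial geometric-optics solution concentrated on the characteristic surface and supported on exactly one side of it. First I would use $\nabla\varphi(x_0)\neq 0$ together with the analyticity of $\varphi$ to introduce analytic coordinates in which $x_0=0$ and $\varphi(x)=x_n$; then the characteristic surface becomes $\{x_n=0\}$, the target support is the half-space $\{x_n\le 0\}$, and the hypotheses read $P_m(x,e_n)\equiv 0$ (so $D_n^m$ is absent from the principal part) while $P_m^{(j)}(x,e_n)\neq 0$ at the origin for some $j$, which is precisely the statement that $e_n$ is a \emph{simple} characteristic direction.

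Next I would build a formal asymptotic solution by the WKB ansatz $u_\tau=e^{i\tau\varphi}\sum_{k\ge 0}\tau^{-k}a_k$. Because $\varphi$ solves the eikonal equation $P_m(x,\nabla\varphi)\equiv 0$ by hypothesis, the top-order term in $e^{-i\tau\varphi}P(u_\tau)$ vanishes identically, and the next term produces a first-order transport operator whose principal part is $\sum_j P_m^{(j)}(x,\nabla\varphi)\partial_j$. The simple-characteristic condition guarantees this transport operator is non-characteristic near the origin, so the transport equations $L a_0=0$ and $L a_k=(\text{terms in }a_0,\dots,a_{k-1})$ can be solved recursively by the analytic Cauchy--Kovalevskaya theorem, yielding analytic amplitudes $a_k$ with $P\bigl(e^{i\tau\varphi}\sum_{k=0}^{N}\tau^{-k}a_k\bigr)=e^{i\tau\varphi}\,O(\tau^{m-N-1})$.

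The decisive step is to pass from these approximate solutions to an exact one with the prescribed one-sided support. Here I would superpose over the frequency parameter by a Borel--Laplace type integral $u(x)=\int_{\Gamma} e^{i\tau\varphi(x)}A(x,\tau)g(\tau)\,d\tau$, where $A(x,\tau)\sim\sum_k\tau^{-k}a_k$ is an exact (Borel-summed) amplitude and the contour $\Gamma$ and weight $g$ are chosen so that the integral converges in $C^m$ and annihilates the remainder, giving $Pu=0$. The one-sidedness is then read off from the phase: for $\varphi(x)>0$, i.e. $x_n>0$, the factor $e^{i\tau\varphi}$ decays as $\tau$ moves into the upper half-plane, so $\Gamma$ may be deformed to infinity to yield $u\equiv 0$ there, whereas for $x_n<0$ a stationary-phase / steepest-descent estimate shows $u\not\equiv 0$. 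Analyticity off $\{x_n=0\}$ follows because the integral depends analytically on $x$ away from the surface, and the $C^m$ regularity across it is arranged by the decay of $g$.

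I expect the superposition step to be the main obstacle. Making it rigorous requires (i) controlling the analytic growth of the amplitudes $a_k$ via Cauchy estimates so that a genuine Borel sum $A(x,\tau)$ exists and is analytic in $x$ for $x_n\neq 0$; (ii) choosing $\Gamma$ and $g$ so that convergence, exact solvability ($Pu=0$, not merely $Pu=O(\tau^{-\infty})$), and the vanishing for $x_n>0$ hold simultaneously; and (iii) verifying that the surviving contribution for $x_n<0$ is genuinely nonzero, so that $\supp u$ is exactly $\{x_n\le 0\}$ and not a smaller set. The absence of $D_n^m$ and the simplicity of the characteristic are what render (i)--(iii) feasible, since they make the transport equations non-degenerate and confine the relevant asymptotics to a single bicharacteristic sheet.
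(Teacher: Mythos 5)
The paper does not prove this lemma at all---it is quoted verbatim from H\"ormander \cite[Theorem 5.2.1]{Hormander}---so your proposal must be judged as a reconstruction of that classical argument, and as it stands it has a genuine gap exactly where you suspect it: the superposition step. In the analytic category the WKB amplitudes produced by your transport hierarchy obey, and generically saturate, Cauchy-type bounds $|a_k|\le C^{k+1}k!$, so $\sum_k \tau^{-k}a_k$ is a divergent Gevrey-$1$ series; a Borel sum $A(x,\tau)$ then exists only on sectors in $\tau$, and the best it can deliver is $P\bigl(e^{i\tau\varphi}A\bigr)=O(e^{-c|\tau|})$, \emph{not} zero. Integrating over your contour therefore yields $Pu=O(e^{-c})\neq 0$, and there is no mechanism in the proposal to remove this remainder: correcting it would mean solving $Pv=-Pu$ with $\supp v\subset\{\varphi\le\varphi(x_0)\}$, which is again a one-sided characteristic problem---i.e.\ the argument is circular. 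The contour deformation claimed for $\varphi>0$ is also not available, since it requires the integrand to be holomorphic in $\tau$ throughout the swept region, whereas the Borel-summed amplitude has only sectorial analyticity (singularities in the Borel plane are exactly what encode the divergence). Finally, your nonvanishing argument for $\varphi<0$ cannot be ``stationary phase'': the phase is linear in $\tau$, so the integral is a Laplace/Fourier transform with no critical point, and to get $\supp u$ equal to the full half-side you must prove nontriviality at \emph{every} point of $\Gamma$ near $x_0$, not just somewhere. A smaller slip: the transport field $L=\sum_j P_m^{(j)}(x,\nabla\varphi)\partial_j$ is not transversal but \emph{tangent} to the level sets of $\varphi$, since Euler's identity and $P_m(x,\nabla\varphi)\equiv 0$ give $L\varphi=m\,P_m(x,\nabla\varphi)=0$; the transport equations are still solvable by Cauchy--Kovalevskaya with data on a hypersurface transversal to $L$ (which is nonvanishing at $x_0$ by simplicity), but the one-sided support can then come only from the $\tau$-superposition---precisely the step that fails.

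For contrast, the classical proof behind the citation avoids the large parameter entirely: one seeks $u(x)=\sum_{k\ge 0}a_k(x)\,w_k\bigl(\varphi(x)-\varphi(x_0)\bigr)$ with iterated one-sided primitives, e.g.\ $w_k(s)=(-s)_+^{\,k+\beta}/\Gamma(k+\beta+1)$. Expanding $P\bigl(a\,w(\varphi)\bigr)$, the most singular coefficient is $P_m(x,\nabla\varphi)\,a\equiv 0$ by hypothesis, the next gives the transport equation $La_k+ca_k=F_k(a_0,\dots,a_{k-1})$, and majorant estimates give $|a_k|\le C^{k+1}k!$, which the factor $1/\Gamma(k+\beta+1)$ offsets exactly, so the series converges in a small neighborhood $G'$. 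The one-sided support is then built in \emph{termwise} (every term carries $(-s)_+^{\,k+\beta}$), $C^m$ regularity follows by taking $\beta$ large, analyticity off $\Gamma$ follows from locally uniform convergence of a series of analytic functions, and exactness of the support reduces to choosing nonvanishing transport data for $a_0$. This real-variable layer expansion is what makes the exact equation and the exact support compatible---the two requirements your complex-WKB route cannot satisfy simultaneously.
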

\begin{remark}
To avoid ambiguity in Proposition \ref{TheoremHor}, we adopt the notation from H\"ormander's monograph (which may differ from our conventions). For the reader's convenience, we recall the following definitions:
\begin{itemize}
\item $D = i\partial$, where $i$ is the imaginary unit satisfying $i^{2} = -1$;
\item $P_{m}$ denotes the principal part of the operator $P$;
\item $P_{m}^{(j)}(x,\xi)$ stands for the partial derivative of $P_{m}$ with respect to $\xi$ corresponding to a multi-index $j \in \mathbb{N}^{n}$ with $|j| \le m$.
\end{itemize}
\end{remark}

For the deterministic wave equation, Lemma \ref{TheoremHor} implies that the Unique Continuation Property fails across characteristic surfaces. As an illustration, choose distinct points $(s_{0}, y_{0}) \neq (t_{0}, x_{0})$ with $s_{0} < t_{0}$ such that $\{(s_{0},y_{0})\} \cap Q_{0} = \emptyset$ and $|t_{0}-s_{0}| = |x_{0}-y_{0}|$, and define
\begin{equation}\label{rho-1}
\widetilde{\rho}(t,x) = |t-s_{0}| - |x-y_{0}|.
\end{equation}
\begin{corollary}\label{cor}
Assume that $a$ and $\bm{b}$ are real-analytic in $Q_{0}$. Then there exist a neighborhood $Q_{1}$ of $(s_{0}, y_{0})$ and a function $u \in C^{2}(Q_{0})$ satisfying
$$
u_{tt} - \Delta u - a u - \bm{b} \cdot \nabla u = 0,
$$
whose support is given by
\begin{equation}\label{supp}
\operatorname{supp} u = \{(t,x) \in Q_{0} \mid \widetilde{\rho}(t,x) \leq 0\}.
\end{equation}
\end{corollary}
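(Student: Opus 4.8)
The plan is to apply Lemma \ref{TheoremHor} directly, with the differential operator
$$
P(t,x,D) = \partial_t^2 - \Delta - \mathbf{b}\cdot\nabla - a
$$
and the phase $\varphi = \widetilde{\rho}$ from \eqref{rho-1}, localized at the point $(t_0,x_0)$, which lies on the characteristic cone $\{\widetilde{\rho}=0\}$ issuing from the vertex $(s_0,y_0)$. First I would record the principal part. Writing $D=i\partial$ as in H\"ormander's convention, we have $\partial_t^2 = -D_t^2$ and $\Delta = -\sum_j D_{x_j}^2$, so the principal symbol is $P_2(\tau,\xi)=|\xi|^2-\tau^2$ (with $\tau$ dual to $t$ and $\xi$ dual to $x$); the first-order part $\mathbf{b}\cdot\nabla$ and the zeroth-order part $a$ are lower order and, by hypothesis, have real-analytic coefficients on $Q_0$, so every coefficient of $P$ is real-analytic there.

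Next I would verify the three hypotheses of Lemma \ref{TheoremHor} at $(t_0,x_0)$. Because $s_0<t_0$ and $|x_0-y_0|=|t_0-s_0|=t_0-s_0>0$, in a full neighborhood of $(t_0,x_0)$ one has $t-s_0>0$ and $x\neq y_0$, so the absolute values in \eqref{rho-1} can be resolved and $\widetilde{\rho}(t,x)=(t-s_0)-|x-y_0|$ is real-analytic there. A direct computation gives $\partial_t\widetilde{\rho}=1$ and $\nabla_x\widetilde{\rho}=-(x-y_0)/|x-y_0|$, whence $\nabla\widetilde{\rho}(t_0,x_0)\neq 0$, and moreover $(\partial_t\widetilde{\rho})^2=1=|\nabla_x\widetilde{\rho}|^2$ holds identically, i.e. $P_2(t,x,\nabla\widetilde{\rho})\equiv 0$ on the neighborhood, so the surface is characteristic. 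Finally the characteristic is simple: $\partial_\tau P_2=-2\tau$ evaluates to $-2\neq0$ at $\tau=\partial_t\widetilde{\rho}=1$, hence $P_2^{(j)}(t,x,\nabla\widetilde{\rho})\neq 0$ at $(t_0,x_0)$ for the index $j$ dual to $t$.

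With these checks, Lemma \ref{TheoremHor} (applied with $m=2$) yields a neighborhood $G'$ of $(t_0,x_0)$ and a function $u\in C^2(G')$, analytic where $\widetilde{\rho}\neq\widetilde{\rho}(t_0,x_0)=0$, solving $Pu=0$ with $\supp u=\{\widetilde{\rho}\le 0\}$; restricting to $Q_0$ and taking $Q_1$ to be this localization gives \eqref{supp}. I expect no deep obstacle, since the result is a clean specialization of H\"ormander's nonuniqueness theorem; the only genuine care points are bookkeeping, namely (i) resolving the non-smoothness of $\widetilde{\rho}$ at the cone vertex by exploiting that $(t_0,x_0)$ is separated from $(s_0,y_0)$, and (ii) translating correctly between the $\partial$- and $D$-conventions so that both the identical vanishing $P_2(\cdot,\nabla\widetilde{\rho})\equiv0$ and the non-degeneracy $P_2^{(j)}(\cdot,\nabla\widetilde{\rho})\neq0$ are verified with the right signs.
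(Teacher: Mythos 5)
Your proposal is correct and takes essentially the same route as the paper: a direct application of Lemma \ref{TheoremHor} with $\varphi=\widetilde{\rho}$ localized at $(t_0,x_0)$, verifying $\nabla\varphi\neq 0$, the identical vanishing of the principal symbol $P_2(\tau,\xi)=|\xi|^2-\tau^2$ along $(\partial_t\widetilde{\rho},\nabla_x\widetilde{\rho})$, and the simple-characteristic condition via the nonvanishing $\tau$-derivative. If anything, you are a bit more careful than the paper on bookkeeping points (resolving the absolute values in $\widetilde{\rho}$ away from the vertex $(s_0,y_0)$, and computing $\nabla_x\widetilde{\rho}=-(x-y_0)/|x-y_0|$, where the paper's displayed gradient drops a sign that is immaterial since only nonvanishing is needed).
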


\begin{proof}[Proof of Corollary \ref{cor}]
To apply Lemma \ref{TheoremHor}, we take $\varphi(t,x) = \widetilde{\rho}(t,x)$. 
Since $s_{0} \neq t_{0}$ and $y_{0} \neq x_{0}$, the derivatives of $\varphi$ at $(t_{0}, x_{0})$ are
\begin{equation}\label{1}
\bigl(\partial_{t}\varphi(t_{0}, x_{0}), \nabla\varphi(t_{0}, x_{0})\bigr)
= \left( \frac{t_{0}- s_{0}}{|t_{0} - s_{0}|},\; \frac{x_{0} - y_{0}}{|x_{0} - y_{0}|} \right) \neq (0,0).
\end{equation}
The principal symbol $P_{m}$ of the wave operator evaluated at this co‑vector vanishes:
\begin{equation}\label{2}
P_{m}\bigl(t, x, (\partial_{t}\varphi, \nabla\varphi)\bigr)
= 1 - 1 = 0.
\end{equation}
Moreover, its derivative with respect to the multi‑index $(1,0)$ is nonzero:
\begin{equation}\label{3}
P_{m}^{(1,0)}\Bigl(t_{0}, x_{0}, \bigl(\partial_{t}\varphi(t_{0},x_{0}), \nabla\varphi(t_{0}, x_{0})\bigr)\Bigr)
= \frac{2(t_{0} - s_{0})}{|t_{0} - s_{0}|} \neq 0.
\end{equation}
Conditions \eqref{1}, \eqref{2} and \eqref{3} together fulfill the hypotheses of Lemma \ref{TheoremHor}; hence \eqref{supp} follows.
\end{proof}

\section{The finite speed of propagation for solutions to the stochastic wave equation}

Let $K$ be a closed subset of $\mathbb{R}^n$. Denote by $\mathbf{d}_K(x)$ the distance from $x \in \mathbb{R}^n$ to $K$, i.e.,
$$
\mathbf{d}_K(x) \triangleq \inf\{|x - y| : y \in K\}.
$$
It is well known that $\mathbf{d}_K$ is Lipschitz continuous with constant $1$, i.e.,
$$
|\mathbf{d}_K(x) - \mathbf{d}_K(y)| \le |x - y|, \quad \forall x, y \in \mathbb{R}^n.
$$
Consequently, by Rademacher's theorem, $\mathbf{d}_K$ is differentiable almost everywhere, its gradient satisfies $|\nabla \mathbf{d}_K| \le 1$ almost everywhere, and $\mathbf{d}_K \in W^{1,\infty}_{\mathrm{loc}}(\mathbb{R}^n)$.

For each $r \ge 0$, define the closed $r$-neighborhood of $K$ by  
$$
K_r \triangleq \{x \in \mathbb{R}^n|~ \mathbf{d}_K(x) \le r\}.
$$
In this section, we borrow some ideas from \cite{VB} and   prove the following result.

\begin{proposition}\label{prop1}
Let $G = \mathbb{R}^n$ and $f = 0$ in equation \eqref{main_equation}.  
Let $
u \in L^2_{\mathbb{F}}(\Omega; C([0,T]; H^1_{\mathrm{loc}}(\mathbb{R}^n))) \cap L^2_{\mathbb{F}}(\Omega; C^1([0,T]; L^2_{\mathrm{loc}}(\mathbb{R}^n)))$  
be a solution of \eqref{main_equation} with initial data $u(0,\cdot) = u_0$, $u_t(0,\cdot) = u_1$.  
If 
\begin{equation}\label{A4}
\supp u_0\subset K, \q \supp u_1\subset K,
\end{equation}
then $\mathbb{P}$-almost surely, $\operatorname{supp} u(t,\cdot) \subset K_t$ for every $t \ge 0$.
\end{proposition}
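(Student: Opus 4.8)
The plan is to run a localized energy estimate over backward light cones, adapted to the It\^o setting. First I would reduce the assertion to a single-ball statement: it suffices to prove that whenever $u_0$ and $u_1$ vanish on a ball $B(x^*,R)$, the solution $u$ vanishes $\mathbb{P}$-a.s. on the backward cone $\{(t,x)\mid |x-x^*|<R-t,\ 0\le t<R\}$. Indeed, for any $x^*$ with $\mathbf{d}_K(x^*)>t$ the ball $B(x^*,\mathbf{d}_K(x^*))$ is disjoint from $K$, so by \eqref{A4} the data vanish there; the cone statement then yields $u(t,x^*)=0$, i.e. $\supp u(t,\cd)\subset K_t$.

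Second, I would introduce the local energy
$$E(t)\=\frac12\int_{B(x^*,R-t)}\big(u_t^2+|\n u|^2+u^2\big)\,dx$$
and compute $\frac{d}{dt}\mathbb{E}[E(t)]$ via It\^o's formula. The essential structural point is that only $u_t$ carries a martingale part: since $u\in C^1([0,T];L^2_{loc})$ pathwise, one has $du=u_t\,dt$ and $d(\n u)=\n u_t\,dt$ with no It\^o correction, whereas $d(u_t^2)=2u_t\,du_t+(du_t)^2$ produces the quadratic--variation term $(du_t)^2=(b_1u_t+b_2u)^2\,dt$. Substituting $du_t=\De u\,dt+(\ldots)dt+(b_1u_t+b_2u)\,dW$ from \eqref{main_equation} (with $f\equiv0$), the drift term $\int u_t\De u$ cancels against $-\int u_t\De u$ arising when $\int \n u\cd\n u_t$ is integrated by parts, leaving only a boundary flux $\int_{\partial B}(\partial_n u)u_t\,d\sigma$.

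Third, I would exploit the geometry of the shrinking ball. Differentiating the domain of integration at speed one contributes $-\frac12\int_{\partial B(x^*,R-t)}(u_t^2+|\n u|^2+u^2)\,d\sigma$, and the Cauchy--Schwarz bound $|(\partial_n u)u_t|\le\frac12(|\n u|^2+u_t^2)$ shows the flux is dominated by this negative boundary term, so all boundary contributions combine to a nonpositive quantity. What survives is $\mathbb{E}\int_{B}\big[u_t(a_1u_t+\bm a_2\cd\n u+a_3u)+\frac12(b_1u_t+b_2u)^2+uu_t\big]\,dx$; taking expectation annihilates the martingale $\int u_t(b_1u_t+b_2u)\,dW$, and the essential boundedness of $a_1,\bm a_2,a_3,b_1,b_2$ bounds the integrand by $C(u_t^2+|\n u|^2+u^2)$. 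Here the It\^o correction $\frac12(b_1u_t+b_2u)^2$, though positive, is merely bounded and is harmlessly absorbed (its sign is irrelevant for propagation speed, in contrast to its decisive role in the Carleman estimates of Theorem \ref{th3}). Thus $\frac{d}{dt}\mathbb{E}[E(t)]\le C\,\mathbb{E}[E(t)]$; since $E(0)=0$, Gr\"onwall forces $\mathbb{E}[E(t)]=0$ on $[0,R)$, whence $E(t)=0$ $\mathbb{P}$-a.s. and $u(t,\cd)=0$ on $B(x^*,R-t)$. The almost-sure statement valid for all $t$ simultaneously then follows from the pathwise continuity of $u$ together with a countable exhaustion over rational triples $(t,x^*,R)$.

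The main obstacle is the rigorous justification of the energy identity at the available regularity: $u$ is only $C([0,T];H^1_{loc})$ with $u_t\in C([0,T];L^2_{loc})$, so $\De u$ lives in a negative Sobolev space and the traces $\partial_n u$, $u_t$ on $\partial B$ are not directly defined. I would circumvent this by mollifying $u$ in the spatial variable and replacing the sharp ball indicator by a Lipschitz cutoff supported in the cone--which converts the delicate boundary flux into a sign-definite gradient term readable off the weak formulation--then derive the estimate for the regularization and pass to the limit. Controlling the commutators between the mollifier and the variable coefficients, and verifying the integrability needed for the stochastic integral to be a genuine martingale, is where the real work lies; the hyperbolic cancellation and the Gr\"onwall step are otherwise routine.
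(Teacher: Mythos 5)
Your proposal is correct and is essentially the paper's argument: a localized energy estimate in expectation, with the It\^o correction $(du_t)^2=(b_1u_t+b_2u)^2\,dt$ absorbed as a bounded interior term, and Gr\"onwall applied to the vanishing initial energy. In fact the paper implements it exactly in the form you propose as your regularity repair--rather than sharp balls with boundary fluxes, it works directly with the weighted energy $\mathcal{E}(t)=\tfrac12\,\mathbb{E}\int_{\mathbb{R}^n}\rho\bigl(\mathbf{d}_K(x)-t\bigr)\bigl(u_t^2+|\nabla u|^2+u^2\bigr)\,dx$ for a monotone $C^1$ cutoff $\rho$, where $|\nabla\mathbf{d}_K|\le 1$ and Cauchy--Schwarz make the $\rho'$-terms nonpositive, so no boundary traces or ball-by-ball reduction over rational triples are needed.
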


\begin{proof}[Proof of Proposition \ref{prop1}] 
Choose a function $\rho \in C^1(\mathbb{R})$ satisfying  
\begin{equation}\label{A1}
\begin{cases}
	\rho(0) = 0, \\
	\rho(s) > 0 \quad \forall s > 0,\\
	\rho(s) = 0 \quad \forall s \le 0, \\
	\rho'(s) \ge 0 \quad \forall s \ge 0.
\end{cases}
\end{equation}
Under assumption \eqref{A4} and by standard regularity theory for \eqref{main_equation} (see, e.g., [QLXZ, Chapter 3]), the solution $u$ actually lies in  
$$
L^2_{\mathcal{F}}(\Omega; C([0,T]; H^1(\mathbb{R}^n))) \cap L^2_{\mathcal{F}}(\Omega; C^1([0,T]; L^2(\mathbb{R}^n))).
$$
Define the local energy functional $\mathcal{E}: [0,\infty) \to \mathbb{R}$ by  
$$
\mathcal{E}(t) \triangleq \frac12 \, \mathbb{E} \int_{\mathbb{R}^n} \rho\bigl(\mathbf{d}_K(x) - t\bigr)\Bigl(|\nabla u(t,x)|^2 + u_t^2(t,x) + u^2(t,x)\Bigr)\, dx.
$$
Condition \eqref{A4} ensures that $\mathcal{E}(0) = 0$. To prove the theorem, it suffices to show that $\mathcal{E}(t) = 0$ for all $t > 0$.

Applying It\^o's formula, we obtain  
\begin{equation}\label{A2}
\begin{aligned}
	d\mathcal{E}(t) &= -\frac12 \, \mathbb{E} \int_{\mathbb{R}^n} \rho'\bigl(\mathbf{d}_K(x)-t\bigr)
	\Bigl(|\nabla u|^2 + u_t^2 + u^2\Bigr)\, dx\, dt \\
	&\quad + \mathbb{E} \int_{\mathbb{R}^n} \rho\bigl(\mathbf{d}_K(x)-t\bigr)
	\Bigl[\nabla u \cdot \nabla u_t\, dt + u_t \, du_t + \frac12 (du_t)^2 + u u_t\, dt\Bigr] \, dx.
\end{aligned}
\end{equation}
Using the equation \eqref{main_equation} and integrating by parts in space yields  
\begin{equation}\label{A3}
\begin{aligned}
	\mathcal{E}(t) &= -\frac12 \, \mathbb{E} \int_0^t \int_{\mathbb{R}^n} \rho'\bigl(\mathbf{d}_K(x)-\tau\bigr)
	\Bigl[|\nabla u|^2 + u_t^2 + u^2 + 2\nabla u \cdot \nabla \mathbf{d}_K(x)\, u_t \Bigr] \, dx\, d\tau \\
	&\quad + \mathbb{E} \int_0^t \int_{\mathbb{R}^n} \rho\bigl(\mathbf{d}_K(x)-\tau\bigr)
	\Bigl[ u_t(a_1 u_t - \mathbf{a}_2\!\cdot\!\nabla u + a_3 u) + \frac12 (b_1 u_t + b_2 u)^2 + u u_t\Bigr] \, dx\, d\tau .
\end{aligned}
\end{equation}
Recalling that $|\nabla \mathbf{d}_K| \le 1$ almost everywhere, by Chauchy-Schwartz inequality, we get from \eqref{A3} that  
$$
\mathcal{E}(t) \le C \int_0^t \mathcal{E}(\tau)\, d\tau
$$
for some constant $C > 0$ independent of $t$.  
Since $\mathcal{E}(0)=0$, Gronwall's inequality implies $\mathcal{E}(t) = 0$ for all $t \ge 0$.  
This completes the proof. 
\end{proof}

\end{appendices}

\end{document}